\documentclass[12pt]{amsart}

\setcounter{secnumdepth}{1}
\usepackage[matrix,arrow,curve,frame]{xy}
\usepackage{amsmath,amsthm,amssymb,enumerate}
\usepackage{latexsym}
\usepackage{amscd}
\usepackage[colorlinks=false]{hyperref}
\usepackage{euscript}

\setlength{\oddsidemargin}{0in} \setlength{\evensidemargin}{0in}
\setlength{\marginparwidth}{0in} \setlength{\marginparsep}{0in}
\setlength{\marginparpush}{0in} \setlength{\topmargin}{0in}
\setlength{\headheight}{0pt} \setlength{\headsep}{0pt}
\setlength{\footskip}{.3in} \setlength{\textheight}{9.2in}
\setlength{\textwidth}{6.5in} \setlength{\parskip}{4pt}

\newtheorem{thm}[subsection]{Theorem}

\newtheorem{prob}[subsection]{Problem}

\newtheorem{cor}[subsection]{Corollary}
\newtheorem{lemma}[subsection]{Lemma}
\newtheorem{conj}[subsection]{Conjecture}

\theoremstyle{definition}

\numberwithin{equation}{section}

\def\C{{\bf C}}

\def\cO{{\cal O}}

\def\cP{{\cal P}}
\def\cA{{\cal A}}

\def\ra{\rightarrow}
\def\bra{\langle}
\def\ket{\rangle}


\def\C{{\bf C}}

\def\cA{{\mathcal A}}
\def\cB{{\mathcal B}}

\def\cD{{\mathcal D}}
\def\cE{{\mathcal E}}
\def\cF{{\mathcal F}}

\def\cI{{\mathcal I}}
\def\cJ{{\mathcal J}}

\def\cM{{\mathcal M}}

\def\cO{{\mathcal O}}
\def\cP{{\mathcal P}}

\def\cR{{\mathcal R}}
\def\cS{{\mathcal S}}

\def\cV{{\mathcal V}}
\def\cW{{\mathcal W}}


\def\gg{{\mathfrak g}}

\def\gl{{\mathfrak l}}

\def\go{{\mathfrak o}}
\def\gp{{\mathfrak p}}

\def\gs{{\mathfrak s}}

\newfont{\german}{eufm10}

\begin{document}
\pagestyle{plain}

\title
{The structure of the Kac-Wang-Yan algebra}

\author{Andrew R. Linshaw}
\address{Department of Mathematics, University of Denver}
\email{andrew.linshaw@du.edu}
\thanks{This work was partially supported by a grant from the Simons Foundation (\#318755 to Andrew Linshaw)}

{\abstract \noindent The Lie algebra $\cD$ of regular differential operators on the circle has a universal central extension $\hat{\cD}$. The invariant subalgebra $\hat{\cD}^+$ under an involution preserving the principal gradation was introduced by Kac, Wang, and Yan. The vacuum $\hat{\cD}^+$-module with central charge $c\in\mathbb{C}$, and its irreducible quotient $\cV_c$, possess vertex algebra structures, and $\cV_c$ has a nontrivial structure if and only if $c\in \frac{1}{2}\mathbb{Z}$. We show that for each integer $n>0$, $\cV_{n/2}$ and $\cV_{-n}$ are $\cW$-algebras of types $\cW(2,4,\dots,2n)$ and $\cW(2,4,\dots, 2n^2+4n)$, respectively. These results are formal consequences of Weyl's first and second fundamental theorems of invariant theory for the orthogonal group $\text{O}(n)$ and the symplectic group $\text{Sp}(2n)$, respectively. Based on Sergeev's theorems on the invariant theory of $\text{Osp}(1,2n)$ we conjecture that $\cV_{-n + 1/2}$ is of type $\cW(2,4,\dots, 4n^2+8n+2)$, and we prove this for $n=1$. As an application, we show that invariant subalgebras of $\beta\gamma$-systems and free fermion algebras under arbitrary reductive group actions are strongly finitely generated.}

\keywords{invariant theory; vertex algebra; reductive group action; orbifold construction; strong finite generation; $\cW$-algebra}
\maketitle
\section{Introduction}

Let $\cD$ denote the Lie algebra of regular differential operators on the circle. It has a universal central extension $\hat{\cD} = \cD \oplus \mathbb{C}\kappa$ which was studied by Kac and Peterson in \cite{KP}. Although $\hat{\cD}$ admits a principal $\mathbb{Z}$-gradation and triangular decomposition, its representation theory is nontrivial because the graded pieces are all infinite-dimensional. The problem of constructing and classifying the {\it quasifinite} irreducible, highest-weight representations (i.e., those with finite-dimensional graded pieces) was solved by Kac and Radul in \cite{KRI}. Explicit constructions of these modules were given in terms of the representation theory of $\widehat{\gg\gl}(\infty,R_m)$, which is a central extension of the Lie algebra of infinite matrices over $R_m = \mathbb{C}[t]/(t^{m+1})$ having only finitely many nonzero diagonal entries. The authors also classified all such $\hat{\cD}$-modules which are unitary.

In \cite{FKRW}, the representation theory of $\hat{\mathcal{D}}$ was developed by Frenkel, Kac, Radul, and Wang from the point of view of vertex algebras. For each $c\in\mathbb{C}$, $\hat{\mathcal{D}}$ admits a module $\mathcal{M}_c$ called the {\it vacuum module}, which is a vertex algebra freely generated by elements $J^l$ of weight $l+1$, for $l\geq 0$. The highest-weight representations of $\hat{\mathcal{D}}$ are in one-to-one correspondence with the highest-weight representations of $\mathcal{M}_c$. The irreducible quotient of $\mathcal{M}_c$ by its maximal graded, proper $\hat{\cD}$-submodule is a simple vertex algebra, and is often denoted by $\mathcal{W}_{1+\infty,c}$. These algebras have been studied extensively in both the physics and mathematics literature; see for example \cite{AFMO,ASV,BN,BS,CTZ,FKRW,KRII}. The above central extension is normalized so that $\cM_c$ is reducible if and only if $c\in \mathbb{Z}$. We have $\cW_{1+\infty,0} \cong \mathbb{C}$, and for a positive integer $n$, $\cW_{1+\infty,n}$ is isomorphic to $\cW(\gg\gl_n)$ \cite{FKRW}. In particular $\cW_{1+\infty,n}$ is of type $\cW(1,2,\dots, n)$; in other words it has a minimal strong generating set consisting of an element in each weight $1,2,\dots, n$. It is known that $\cW_{1+\infty,-1}$ is isomorphic to $\cW(\gg\gl_3)$ \cite{WaI}, but the structure of $\cW_{1+\infty,-n}$ for $n>1$ is more complicated. It is of type $\cW(1,2,\dots, n^2+2n)$ \cite{LI}, but it is not known if it can be related to a standard $\cW$-algebra.

\subsection{The Kac-Wang-Yan algebra}
There are exactly two anti-involutions $\sigma_{\pm}$ of $\hat{\cD}$ (up to conjugation) which preserve the principal $\mathbb{Z}$-gradation. In \cite{KWY}, Kac, Wang, and Yan carried out a similar classification of the quasifinite, irreducible highest-weight modules over the Lie subalgebras $\hat{\cD}^{\pm}$ fixed by $-\sigma_{\pm}$. The authors regarded $\hat{\cD}^+$ as the more fundamental of these two subalgebras, and showed that the vacuum $\hat{\cD}^+$-module $\cM^+_c$ admits a vertex algebra structure for each $c\in\mathbb{C}$. In fact, $\cM^+_c$ is a vertex subalgebra of $\cM_c$, and is freely generated by elements $W^{2m+1}$ of weight $2m+2$, for $m\geq 0$. The unique irreducible quotient of $\cM^+_c$ is denoted by $\cV_c$ in \cite{KWY}, and we shall refer to $\cV_c$ as the {\it Kac-Wang-Yan algebra} in this paper. Let $\pi_c$ denote the projection $\cM^+_c\rightarrow \cV_c$, whose kernel $\mathcal{I}_c$ is the maximal proper graded $\hat{\cD}^+$-submodule of $\cM^+_c$, and let $w^{2k+1} = \pi_c(W^{2k+1})$. The module $\cM^+_c$ is reducible if and only if $c\in \frac{1}{2}\mathbb{Z}$, and $\cV_0 \cong \mathbb{C}$. It was conjectured by Wang in \cite{WaIII} that for a positive integer $n$, $\cV_{n/2}$ is of type $\cW(2,4,\dots, 2n)$, and this was proven for $n=1,2$. By a theorem of de Boer, Feher, and Honecker \cite{BFH}, $\cV_{-1}$ is of type $\cW(2,4,6)$, but for a general $c\in \frac{1}{2} \mathbb{Z}$, finding a minimal strong generating set for $\cV_c$ is an open problem; see Remark 14.6 of \cite{KWY} and Problem 1 of \cite{WaIII}. In this paper, we will solve this problem for negative $c\in \mathbb{Z}$, and for positive $c\in \frac{1}{2} \mathbb{Z}$, and we will give an application of these results to the {\it vertex algebra Hilbert problem}. The remaining case where $c=-n+1/2$ for an integer $n\geq 1$ is more difficult, and we will give a conjectural solution which holds for $n=1$.

\subsection{The case $c=-n$ for $n\geq 1$}
Our starting point is a remarkable free field realization of $\cV_{-n}$ as the $\text{Sp}(2n)$-invariant subalgebra of the $\beta\gamma$-system $\cS(n)$ of rank $n$ \cite{KWY}. The isomorphism $\cV_{-n} \cong \cS(n)^{\text{Sp}(2n)}$ indicates that the structure of $\cV_{-n}$ is deeply connected to classical invariant theory. The action of $\text{Sp}(2n)$ on $\cS(n)$ is analogous to the action of $\text{Sp}(2n)$ on the $n^{\text{th}}$ Weyl algebra $\cD(n)$, and $\text{Sp}(2n)$ is the {\it full} automorphism group of $\cS(n)$. Moreover, $\cS(n)$ admits an $\text{Sp}(2n)$-invariant filtration such that the associated graded algebra $\text{gr}(\cS(n))$ is isomorphic to $\text{Sym} \bigoplus_{k\geq 0} U_k$ as a commutative ring, where each $U_k$ is a copy of the standard $\text{Sp}(2n)$-module $\mathbb{C}^{2n}$. As a vector space, $\cV_{-n}$ is isomorphic to $R = (\text{Sym} \bigoplus_{k\geq 0} U_k)^{\text{Sp}(2n)}$, and we have isomorphisms of graded commutative rings $$\text{gr}(\cV_{-n}) \cong \text{gr}(\cS(n)^{\text{Sp}(2n)}) \cong \text{gr}(\cS(n))^{\text{Sp}(2n)} \cong R.$$ In this sense, we regard $\cV_{-n}$ as a {\it deformation} of the classical invariant ring $R$.

By Weyl's first and second fundamental theorems of invariant theory for the standard representation of $\text{Sp}(2n)$, $R$ is generated by quadratics $$\{q_{a,b}|\ 0\leq a<b\},$$ and the ideal of relations among the $q_{a,b}$'s is generated by the degree $n+1$ Pfaffians $$\{p_I|\ I = (i_0,  \dots, i_{2n+1}),\ 0\leq i_0<\cdots < i_{2n+1}\}.$$ We obtain a corresponding strong generating set $\{\omega_{a,b}\}$ for $\cV_{-n}$, as well as generators $\{P_I\}$ for the ideal of relations among the $\omega_{a,b}$'s, which correspond to the Pfaffians with suitable quantum corrections. The new generating set $\{\omega_{a,b}|\ 0\leq a<b\}$ is related to the old generating set $\{\partial^k w^{2m+1}| k,m\geq 0\}$ by a linear change of variables. The relation of minimal weight occurs at weight $2(n+1)^2$, and corresponds to a singular vector $P_0\in \cI_{-n}\subset \cM^+_{-n}$. 

The technical heart of this paper is the analysis of the quantum corrections of the above classical relations. For each $p_I$, there is a certain correction term $R_I$ appearing in $P_I$ which we call the {\it remainder}. We shall find a closed formula for $R_I$ which implies that the relation in $\cV_{-n}$ of minimal weight has the form \begin{equation} \label{introdecoup} w^{2n^2+4n+1} = Q(w^1, w^3, \dots, w^{2n^2+4n-1}),\end{equation} where $Q$ is a normally ordered polynomial in $w^1, w^3, \dots, w^{2n^2+4n-1}$, and their derivatives. We call \eqref{introdecoup} a {\it decoupling relation}, and by applying the operator $w^3 \circ_1$ repeatedly, we can construct higher decoupling relations $$w^{2m+1} = Q_m(w^1, w^3, \dots, w^{2n^2+4n-1})$$ for all $m>n^2+2n$. This shows that $\{w^1, w^3, \dots, w^{2n^2+4n-1}\}$ is a minimal strong generating set for $\cV_{-n}$, and in particular $\cV_{-n}$ is of type $\cW(2,4, \dots, 2n^2+4n)$; see Theorem \ref{descriptionofvn}.

The proof of this theorem is similar in spirit to our proof in \cite{LI} that $\cW_{1+\infty,-n}$ is of type $\cW(1,2,\dots, n^2+2n)$, but there are some important differences between these two problems. In \cite{LI}, we viewed $\cW_{1+\infty,-n}$ as a deformation of the classical invariant ring $(\text{Sym} \bigoplus_{k\geq 0} (V_k \oplus V^*_k))^{\text{GL}(n)}$, where $V_k$ and $V^*_k$ are isomorphic to the standard $\text{GL}(n)$-module $\mathbb{C}^n$ and its dual, respectively. The generators of this ring are quadratic and the relations are $(n+1)\times (n+1)$ determinants. The main result of \cite{LI} follows from the fact that a certain quantum correction of the relation of minimal weight is nonzero. We proved this inductively without finding an explicit formula for this quantum correction. In the present paper this method does not work, and the only way we know how to prove Theorem \ref{descriptionofvn} is to find a closed formula for $R_I$.

\subsection{The case $c=\frac{n}{2}$ for $n\geq 1$}
There is a free field realization of $\cV_{n/2}$ as the $\text{O}(n)$-invariant subalgebra of the free fermion algebra $\cF(n)$ of rank $n$ \cite{KWY}. The full automorphism group of $\cF(n)$ is $\text{O}(n)$, and there is an $\text{O}(n)$-invariant filtration such that $\text{gr}(\cF(n))\cong \bigwedge \bigoplus_{k\geq 0} U_k$, where each $U_k$ is a copy of the standard $\text{O}(n)$-module $\mathbb{C}^{n}$. As vector spaces, $\cV_{n/2}\cong R = (\bigwedge \bigoplus_{k\geq 0} U_k)^{\text{O}(n)}$, and we have isomorphisms of graded commutative rings $$\text{gr}(\cV_{n/2}) \cong \text{gr}(\cF(n)^{\text{O}(n)}) \cong \text{gr}(\cF(n))^{\text{O}(n)} \cong R.$$ 
The generators of $R$ are quadratics $\{q_{a,b}|\ 0\leq a<b\}$, and the ideal of relations is generated by degree $n+1$ polynomials $$\{d_{I,J}|\ I= (i_0, \dots, i_{n}),\ J = (j_0,  \dots, j_{n}),\ 0\leq i_0 \leq \cdots \leq i_{n},\ 0\leq j_0\leq \cdots \leq j_{n}\},$$ which are odd analogues of determinants. The relation of minimal weight occurs at weight $2n+2$, and by analyzing the quantum corrections of the classical relations, we show that it has the form $w^{2n+1} = Q(w^1, w^3,  \dots, w^{2n-1})$. This gives rise to relations $w^{2m+1} = Q_m(w^1, w^3,  \dots, w^{2n-1})$ for all $m>n$. Therefore $\{w^1, w^3,  \dots, w^{2n-1}\}$ is a minimal strong generating set for $\cV_{n/2}$, so $\cV_{n/2}$ is of type $\cW(2,4, \dots, 2n)$ as conjectured by Wang; see Theorem \ref{descriptionofvnodd}.

\subsection{The case $c = -n+\frac{1}{2}$ for $n\geq 1$}
The remaining case where the central charge is $-n+\frac{1}{2}$ for an integer $n\geq 1$ is more difficult. There is a free field realization of $\cV_{-n+1/2}$ as the $\text{Osp}(1,2n)$-invariant subalgebra of $\cS(n) \otimes \cF(1)$, and $\text{gr}(\cS(n)\otimes \cF(1))$ is isomorphic to $\text{Sym} \bigoplus_{k\geq 0} U_k$ as a commutative ring. Here $U_k$ is a copy of the standard $\text{Osp}(1,2n)$-module $\mathbb{C}^{2n|1}$. As a vector space, $\cV_{-n+1/2 }\cong R = (\text{Sym} \bigoplus_{k\geq 0} U_k)^{\text{Osp}(1,2n)}$, and we have isomorphisms of graded commutative rings $$\text{gr}(\cV_{-n+1/2}) \cong \text{gr}((\cS(n) \otimes \cF(1))^{\text{Osp}(1,2n)}) \cong \text{gr}(\cS(n) \otimes \cF(1))^{\text{Osp}(1,2n)} \cong R.$$ The generators and relations for $R$ were given by Sergeev \cite{SI,SII}, and the relation of minimal weight occurs at weight $4n^2+8n+4$. We conjecture that it has the form \begin{equation} \label{introdecoupsergeev} w^{4n^2+8n+3} = Q(w^1, w^3,  \dots, w^{4n^2+8n+1}).\end{equation} If this is correct we can construct relations $w^{2m+1} = Q_m(w^1, w^3,  \dots, w^{4n^2+8n+1})$ for all $m>2n^2+4n+1$. This would imply that $\cV_{-n+1/2}$ is of type $\cW(2,4,\dots, 4n^2+8n+2)$, but unfortunately the quantum corrections are more difficult to analyze in this case. A computer calculation shows that our conjecture holds for $n=1$, so $\cV_{-1/2}$ is of type $\cW(2,4,\dots, 14)$.

\subsection{Representation theory of $\cV_{-n}$ and $\cV_{n/2}$}
The representation theory of $\cV_{-n}$ is governed by its Zhu algebra $A(\cV_{-n})$, which is a commutative algebra on generators $a^1, a^3, \dots, a^{2n^2+4n-1}$, corresponding to the generators of $\cV_{-n}$. The irreducible, admissible $\mathbb{Z}_{\geq 0}$-graded $\cV_{-n}$-modules are therefore all highest-weight modules, and are parametrized by the points in the variety $\text{Spec}(A(\cV_{-n}))$, which is a proper, closed subvariety of $\mathbb{C}^{n^2+2n}$. Similarly, the Zhu algebra of $\cV_{n/2}$ is a commutative algebra on generators $a^1, a^3, \dots, a^{2n-1}$. The irreducible, admissible $\mathbb{Z}_{\geq 0}$-graded $\cV_{n/2}$-modules are also highest-weight modules, and are parametrized by a proper, closed subvariety of $\mathbb{C}^{n}$. 

$\text{Sp}(2n)$ and $\cV_{-n}$ form a dual reductive pair acting on $\cS(n)$ in the sense that \begin{equation} \label{kwydec} \cS(n) \cong \bigoplus_{\nu\in H} L(\nu)\otimes M^{\nu},\end{equation} where $H$ indexes the irreducible, finite-dimensional representations $L(\nu)$ of $\text{Sp}(2n)$, and the $M^{\nu}$'s are inequivalent, irreducible, highest-weight $\cV_{-n}$-modules. Likewise, \begin{equation} \label{kwydecodd} \cF(n) \cong \bigoplus_{\mu\in H'} L(\mu)\otimes M^{\mu},\end{equation} where $H'$ indexes the irreducible, finite-dimensional representations $L(\mu)$ of $\text{O}(n)$, and the $M^{\mu}$'s are inequivalent, irreducible, highest-weight $\cV_{n/2}$-modules. The modules $M^{\nu}$ and $M^{\mu}$ above correspond to rational points on $\text{Spec}(A(\cV_{-n}))$ and $\text{Spec}(A(\cV_{n/2}))$, respectively, and they have the $C_1$-cofiniteness property according to Miyamoto's definition \cite{M}. This is essential for our applications.

\subsection{The vertex algebra Hilbert problem} 

A vertex algebra $\cA$ is called {\it strongly finitely generated} if there exists a finite set of generators such that the set of iterated Wick products of the generators and their derivatives spans $\cA$. This property has many important consequences, and in particular implies that the Zhu algebra of $\cA$ is finitely generated. Recall Hilbert's theorem that if a reductive group $G$ acts on a finite-dimensional complex vector space $V$, the invariant ring $\cO(V)^G$ is finitely generated \cite{HI,HII}. This theorem was very influential in the development of commutative algebra and algebraic geometry. In fact, Hilbert's basis theorem, Nullstellensatz, and syzygy theorem were all introduced in connection with this problem. One can ask similar questions in the setting of noncommutative rings such as Weyl algebras or universal enveloping algebras. In these cases, there are nice filtrations allowing the problem to be reduced to commutative algebra. The analogous problem for vertex algebras is the following. 

\begin{prob} Given a simple, strongly finitely generated vertex algebra $\cA$ and a reductive group $G$ of automorphisms of $\cA$, is $\cA^G$ strongly finitely generated?
\end{prob}

This is much more subtle than the case of noncommutative rings, and generally fails for abelian vertex algebras. The main difficulty is that vertex algebras are not Noetherian, and this phenomenon depends sensitively on the {\it nonassociativity} of vertex algebras. Isolated examples have been known for many years in both the physics and mathematics literature (see for example \cite{BFH,B-H,EFH,DN,FKRW,KWY}), although the first general results of this kind were obtained in \cite{LII}, in the case where $G\subset \text{GL}(n)$ and $\cA$ is either the $\beta\gamma$-system $\cS(n)$, or the $bc$-system $\cE(n)$, which is isomorphic to $\cF(2n)$. We have isomorphisms $\cS(n)^{\text{GL}(n)} \cong \cW_{1+\infty,-n}$ and $\cE(n)^{\text{GL}(n)} \cong \cW_{1+\infty,n}$ by results in \cite{KRII} and \cite{FKRW}, respectively. For $G\subset \text{GL}(n)$, $\cS(n)^G$ and $\cE(n)^G$ are completely reducible as modules over $\cW_{1+\infty,-n}$ and $\cW_{1+\infty,n}$, respectively. The strong finite generation of $\cS(n)^G$ and $\cE(n)^G$ can be deduced from these decompositions, together with the structure of $\cW_{1+\infty,\pm n}$.

For a general reductive $G\subset \text{Sp}(2n)$, the structure of $\cS(n)^G$ is governed by $\cV_{-n}$ rather than $\cW_{1+\infty,-n}$. Similarly, for $G\subset \text{O}(n)$, the structure of $\cF(n)^G$ is governed by $\cV_{n/2}$. Using the structure of $\cV_{-n}$ and $\cV_{n/2}$ and the complete reducibility of $\cS(n)^G$ and $\cF(n)^G$ as modules over $\cV_{-n}$ and $\cV_{n/2}$, respectively, we will show that $\cS(n)^G$ and $\cF(n)^G$ are strongly finitely generated; see Theorems \ref{sfg} and \ref{sfgodd}. Our proof is essentially constructive, and it provides a complete solution to the Hilbert problem for $\cS(n)$ and $\cF(n)$.

\section{Vertex algebras}

We will assume that the reader is familiar with the basics of vertex algebra theory, which has been discussed from various points of view in the literature (see for example \cite{B,FLM,FHL,K,FBZ}). We will follow the formalism developed in \cite{LZ} and partly in \cite{LiI}, and we will use the notation of our previous paper \cite{LI}. By a {\it vertex algebra}, we mean a quantum operator algebra $\cA$ in which any two elements $a,b$ are local, meaning that $(z-w)^N[a(z), b(w)] = 0$ for some positive integer $N$. Here $\cA$ is assumed to be $\mathbb{Z}/2\mathbb{Z}$-graded, and $[,]$ denotes the super bracket. This is well known to be equivalent to the notion of vertex algebra in \cite{FLM}. The operators product expansion (OPE) formula is given by
$$a(z)b(w)\sim\sum_{n\geq 0}a(w)\circ_n b(w)\ (z-w)^{-n-1}.$$ Here $\sim$ means equal modulo terms which are regular at $z=w$, and $\circ_n$ denotes the $n^{\text{th}}$ circle product. A subset $S=\{a_i|\ i\in I\}$ of $\cA$ is said to {\it generate} $\cA$ if $\cA$ is spanned by words in the letters $a_i$, $\circ_n$, for $i\in I$ and $n\in\mathbb{Z}$. We say that $S$ {\it strongly generates} $\cA$ if $\cA$ is spanned by words in the letters $a_i$, $\circ_n$ for $n<0$. Equivalently, $\cA$ is spanned by $$\{ :\partial^{k_1} a_{i_1}\cdots \partial^{k_m} a_{i_m}:| \ i_1,\dots,i_m \in I,\ k_1,\dots,k_m \geq 0\}.$$ We say that $S$ {\it freely generates} $\cA$ if there are no nontrivial normally ordered polynomial relations among the generators and their derivatives.

\section{Category $\mathcal{R}$}
Let $\cR$ be the category of vertex algebras $\cA$ equipped with a $\mathbb{Z}_{\geq 0}$-filtration
\begin{equation} \cA_{(0)}\subset\cA_{(1)}\subset\cA_{(2)}\subset \cdots,\qquad \cA = \bigcup_{d\geq 0}
\cA_{(d)}\end{equation} such that $\cA_{(0)} = \mathbb{C}$, and for all
$a\in \cA_{(r)}$, $b\in\cA_{(s)}$, we have
\begin{equation} \label{goodi} a\circ_n b\in  \bigg\{\begin{matrix}\cA_{(r+s)} & n<0 \\ \cA_{(r+s-1)} & 
n\geq 0 \end{matrix}\ . \end{equation}
Elements $a(z)\in\cA_{(d)}\setminus \cA_{(d-1)}$ are said to have degree $d$.

Filtrations on vertex algebras satisfying \eqref{goodi} are known as {\it good increasing filtrations} \cite{LiII}. Setting $\cA_{(-1)} = \{0\}$, the associated graded algebra $\text{gr}(\cA) = \bigoplus_{d\geq 0}\cA_{(d)}/\cA_{(d-1)}$ is a $\mathbb{Z}_{\geq 0}$-graded associative, (super)commutative algebra with a
unit $1$ under a product induced by the Wick product on $\cA$. For $r\geq 1$ we have the projection \begin{equation} \phi_r: \cA_{(r)} \ra \cA_{(r)}/\cA_{(r-1)}\subset \text{gr}(\cA).\end{equation} 
The operator $\partial$ on $\cA$ induces a derivation of degree zero on $\text{gr}(\cA)$ which we also denote by $\partial$. For each $a\in\cA_{(d)}$ and $n\geq 0$, the operator $a\circ_n$ on $\cA$
induces a derivation of degree $d-k$ on $\text{gr}(\cA)$, which we denote by $a(n)$. Here $$k  = \text{sup} \{ j\geq 1|\ \cA_{(r)}\circ_n \cA_{(s)}\subset \cA_{(r+s-j)},\ \forall r,s,n\geq 0\},$$ as in \cite{LL}. These derivations give $\text{gr}(\cA)$ a vertex Poisson algebra structure.

The assignment $\cA\mapsto \text{gr}(\cA)$ is a functor from $\cR$ to the category of $\mathbb{Z}_{\geq 0}$-graded (super)commutative rings with a differential $\partial$ of degree zero, which we call $\partial$-rings. A $\partial$-ring is just an {\it abelian} vertex algebra, that is, a vertex algebra $\cV$ in which $[a(z),b(w)] = 0$ for all $a,b\in\cV$. A $\partial$-ring $A$ is said to be generated by a set $\{a_i|\ i\in I\}$ if $\{\partial^k a_i|\ i\in I, k\geq 0\}$ generates $A$ as a ring. The key feature of $\cR$ is the following reconstruction property \cite{LL}.

\begin{lemma}\label{reconlem}Let $\cA$ be a vertex algebra in $\cR$ and let $\{a_i|\ i\in I\}$ be a set of generators for $\text{gr}(\cA)$ as a $\partial$-ring, where $a_i$ is homogeneous of degree $d_i$. If $a_i(z)\in\cA_{(d_i)}$ are elements satisfying $\phi_{d_i}(a_i(z)) = a_i$, then $\cA$ is strongly generated as a vertex algebra by $\{a_i(z)|\ i\in I\}$.\end{lemma}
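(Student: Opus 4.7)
The plan is to prove the lemma by induction on the filtration degree $r$, showing that $\cA_{(r)} \subset \cB$ for every $r \geq 0$, where $\cB \subset \cA$ denotes the vertex subalgebra strongly generated by $\{a_i(z)\mid i\in I\}$. The base case $r=0$ is immediate, since $\cA_{(0)} = \mathbb{C}$ and $1 \in \cB$.

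For the inductive step, I would take $b \in \cA_{(r)}$ and look at its class $\phi_r(b) \in \text{gr}(\cA)$, which is homogeneous of degree $r$. Because $\{a_i\}$ generates $\text{gr}(\cA)$ as a $\partial$-ring, I can write
$$\phi_r(b) \;=\; \sum_{\alpha} c_\alpha \; \partial^{k_{\alpha,1}} a_{i_{\alpha,1}} \cdots \partial^{k_{\alpha,m_\alpha}} a_{i_{\alpha,m_\alpha}},$$
with each term of total degree exactly $r$ in the sense that $\sum_j d_{i_{\alpha,j}} = r$. I then lift this to the iterated Wick product
$$\tilde P(z) \;=\; \sum_{\alpha} c_\alpha \; :\partial^{k_{\alpha,1}} a_{i_{\alpha,1}}(z) \cdots \partial^{k_{\alpha,m_\alpha}} a_{i_{\alpha,m_\alpha}}(z):$$
in $\cA$. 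The filtration axiom \eqref{goodi}, applied inductively to the iterated Wick product, ensures $\tilde P(z) \in \cA_{(r)}$, and compatibility of $\partial = d/dz$ with the filtration ensures $\partial^j a_i(z) \in \cA_{(d_i)}$.

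The crucial identity is $\phi_r(\tilde P(z)) = \phi_r(b)$ in $\text{gr}(\cA)$. This holds because the commutative product and derivation on $\text{gr}(\cA)$ are, by the very definition of the associated graded, induced by the Wick product and $d/dz$ on $\cA$, while the hypothesis $\phi_{d_i}(a_i(z)) = a_i$ ensures the generators lift correctly. Therefore $b - \tilde P(z) \in \cA_{(r-1)}$, which lies in $\cB$ by the induction hypothesis; combined with $\tilde P(z) \in \cB$ by construction, this yields $b \in \cB$ and closes the induction.

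There is no real obstacle here. The only subtlety is the verification that the positive circle products, which by \eqref{goodii} strictly decrease filtration degree, contribute zero in $\text{gr}(\cA)$, so that the iterated Wick product in $\cA$ collapses to the naive commutative polynomial at the top of the filtration. Once this compatibility is granted, the lemma is essentially the vertex-algebra version of the standard principle that a set whose images generate the associated graded of a filtered object lifts to a generating set for the object itself.
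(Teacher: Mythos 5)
Your argument is correct and is essentially the standard proof of this reconstruction lemma (the paper itself only cites \cite{LL} for it): induction on filtration degree, lifting a homogeneous polynomial expression for $\phi_r(b)$ to an iterated Wick product $\tilde P(z)\in\cA_{(r)}$, and using that the induced product on $\text{gr}(\cA)$ makes $\phi_r(\tilde P(z))=\phi_r(b)$, so $b-\tilde P(z)\in\cA_{(r-1)}$. Nothing further is needed.
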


There is a similar reconstruction property for kernels of surjective morphisms \cite{LI}. Let $f:\cA\rightarrow \cB$ be a morphism in $\cR$ with kernel $\cJ$, such that $f$ maps each $\cA_{(d)}$ onto $\cB_{(d)}$. The kernel $J$ of the induced map $\text{gr}(f): \text{gr}(\cA)\rightarrow \text{gr}(\cB)$ is a homogeneous $\partial$-ideal, i.e., $\partial J \subset J$. A set $\{a_i|\ i\in I\}$ such that $a_i$ is homogeneous of degree $d_i$ is said to generate $J$ as a $\partial$-ideal if $\{\partial^k a_i|\ i\in I,\ k\geq 0\}$ generates $J$ as an ideal.

\begin{lemma} \label{idealrecon} Let $\{a_i| i\in I\}$ be a generating set for $J$ as a $\partial$-ideal, where $a_i$ is homogeneous of degree $d_i$. Then there exist elements $a_i(z)\in \cA_{(d_i)}$ with $\phi_{d_i}(a_i(z)) = a_i$, such that $\{a_i(z)|\ i\in I\}$ generates $\cJ$ as a vertex algebra ideal.\end{lemma}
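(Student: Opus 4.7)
The plan is to adapt the proof of the reconstruction lemma (Lemma \ref{reconlem}) from strong generating sets of vertex algebras to generating sets of vertex algebra ideals, using induction on the filtration degree. First I would choose lifts carefully: for each $i\in I$, start with any $\tilde a_i(z)\in\cA_{(d_i)}$ with $\phi_{d_i}(\tilde a_i(z)) = a_i$. Since $a_i\in J = \ker(\text{gr}(f))$, the image $f(\tilde a_i(z))$ lies in $\cB_{(d_i-1)}$. Using the hypothesis that $f$ maps $\cA_{(d_i-1)}$ onto $\cB_{(d_i-1)}$, pick $u_i\in\cA_{(d_i-1)}$ with $f(u_i)=f(\tilde a_i(z))$ and set $a_i(z)=\tilde a_i(z)-u_i$. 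Then $a_i(z)\in\cJ\cap\cA_{(d_i)}$ and still satisfies $\phi_{d_i}(a_i(z))=a_i$. Let $\cJ'\subset\cJ$ be the vertex algebra ideal generated by $\{a_i(z)\,|\,i\in I\}$.

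The goal is then to show $\cJ\subset\cJ'$, which I will prove by induction on $k$ for the subspace $\cJ_{(k)}:=\cJ\cap\cA_{(k)}$. The base case $\cJ_{(0)}=0$ is immediate since $\cA_{(0)}=\mathbb{C}$ and $f({\bf 1})={\bf 1}$. For the inductive step, assume $\cJ_{(k-1)}\subset\cJ'$ and take $b\in\cJ_{(k)}$. Since $\phi_k(b)\in J$ and $\{a_i\}$ generates $J$ as a $\partial$-ideal, we may write
$$\phi_k(b) = \sum_j c_j\cdot \partial^{n_j} a_{i_j}$$
in $\text{gr}(\cA)$, where each $c_j$ is homogeneous of degree $k-d_{i_j}$. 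Choose lifts $c_j(z)\in\cA_{(k-d_{i_j})}$ with $\phi_{k-d_{i_j}}(c_j(z))=c_j$, and form
$$b' = \sum_j\, :c_j(z)\,\partial^{n_j} a_{i_j}(z):\ \in\cA_{(k)},$$
using \eqref{goodi}. Because the Wick product induces the commutative multiplication on $\text{gr}(\cA)$, we have $\phi_k(b')=\phi_k(b)$, so $b-b'\in\cA_{(k-1)}$.

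Now $a_{i_j}(z)\in\cJ'$ and $\cJ'$ is a vertex algebra ideal (hence closed under $\partial$ and under Wick products with arbitrary elements of $\cA$), so $b'\in\cJ'$. Therefore $b-b'\in\cJ\cap\cA_{(k-1)}=\cJ_{(k-1)}\subset\cJ'$ by the inductive hypothesis, and we conclude $b\in\cJ'$. This closes the induction and gives $\cJ=\cJ'$.

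The only subtlety worth highlighting is the initial lifting step, where we use the surjectivity of $f$ on each filtered piece to ensure the chosen generators $a_i(z)$ can be taken inside $\cJ$ itself (not merely inside $\cA_{(d_i)}$); without this the inclusion $\cJ'\subset\cJ$ would fail. The rest of the argument is a routine filtered-to-graded bookkeeping exercise, parallel to Lemma \ref{reconlem}.
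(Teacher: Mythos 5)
Your proof is correct, and it is essentially the argument of the reference: the paper itself gives no proof of this lemma but defers to \cite{LI}, where the same strategy appears — correct the lifts using the surjectivity of $f$ on filtered pieces so that they land in $\cJ$, then induct on filtration degree, subtracting a normally ordered lift of the graded expression of $\phi_k(b)$ in terms of the $\partial^{n}a_i$ to drop into $\cJ_{(k-1)}$. You correctly identified the one non-routine point, namely that the generators must be adjusted to lie in $\cJ$ itself before the induction can start.
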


\section{The vertex algebra $\cW_{1+\infty,c}$}
Let $\cD$ be the Lie algebra of regular differential operators on the circle, with coordinate $t$. A standard basis for $\cD$ is $$J^l_k = -t^{l+k} (\partial_t)^l,\qquad k\in \mathbb{Z},\qquad l\in \mathbb{Z}_{\geq 0},$$ where $\partial_t = \frac{d}{dt}$. $\cD$ has a $2$-cocycle given by \begin{equation}\label{cocycle} \Psi\big(f(t) (\partial_t)^m,  g(t) (\partial_t)^n\big) = \frac{m! n!}{(m+n+1)!} \text{Res}_{t=0} f^{(n+1)}(t) g^{(m)}(t) dt,\end{equation} and a corresponding central extension $\hat{\cD} = \cD \oplus \mathbb{C} \kappa$ \cite{KP}. $\hat{\cD}$ has a $\mathbb{Z}$-grading $\hat{\cD} = \bigoplus_{j\in\mathbb{Z}} \hat{\cD}_j$ by weight, given by
$$\text{wt} (J^l_k) = k,\qquad \text{wt} (\kappa) = 0,$$ and a triangular decomposition $\hat{\cD} = \hat{\cD}_+\oplus\hat{\cD}_0\oplus \hat{\cD}_-$, where $\hat{\cD}_{\pm} = \bigoplus_{j\in \pm \mathbb{N}} \hat{\cD}_j$ and $\hat{\cD}_0 = \cD_0\oplus \mathbb{C}\kappa$. For $c\in\mathbb{C}$ and $\lambda\in \cD_0^*$, define the Verma module 
$$\cM_c(\hat{\cD},\lambda) = U(\hat{\cD})\otimes_{U(\hat{\cD}_0\oplus \hat{\cD}_+)} \C_{\lambda},$$ where $\C_{\lambda}$ is the one-dimensional $\hat{\cD}_0\oplus \hat{\cD}_+$-module on which $\kappa$ acts by multiplication by $c$ and $h\in\hat{\cD}_0$ acts by multiplication by $\lambda(h)$, and $\hat{\cD}_+$ acts by zero. Let $\cP$ be the parabolic subalgebra of $\cD$ consisting of differential operators which extend to all of $\mathbb{C}$, which has a basis $\{J^l_k|\ l\geq 0,\ l+k\geq 0\}$. The cocycle $\Psi$ vanishes on $\cP$, so $\cP$ may be regarded as a subalgebra of $\hat{\cD}$, and $\hat{\cD}_0\oplus \hat{\cD}_+\subset \hat{\cP}$, where $\hat{\cP} = \cP\oplus \mathbb{C}\kappa$. The induced $\hat{\cD}$-module $$\cM_c = U(\hat{\cD})\otimes_{U(\hat{\cP})} \C_0$$ is a quotient of $\cM_c(\hat{\cD},0)$, and is known as the {\it vacuum $\hat{\cD}$-module of central charge $c$}. $\cM_c$ has a vertex algebra structure with generators $$J^l(z) = \sum_{k\in\mathbb{Z}} J^l_k z^{-k-l-1},\qquad l\geq 0,$$ of weight $l+1$. The modes $J^l_k$ represent $\hat{\cD}$ on $\cM_c$, and we write these elements in the form
$$J^l(z) = \sum_{k\in\mathbb{Z}} J^l(k) z^{-k-1},$$ where $J^l(k) = J^l_{k-l}$. In fact, $\cM_c$ is freely generated by $\{J^l(z)|\ l\geq 0\}$; the monomials $ :\partial^{i_1}J^{l_1}(z)\cdots \partial^{i_r} J^{l_r}(z):$ such that $l_1\leq \cdots \leq l_r$ and $i_a\leq i_b$ if $l_a = l_b$, form a basis for $\cM_c$. 

A weight-homogeneous element $\omega\in \mathcal{M}_c$ is called a {\it singular vector} if $J^l\circ_k \omega = 0$ for all $k>l\geq 0$. The maximal proper $\hat{\mathcal{D}}$-submodule $\mathcal{I}_c$ is the vertex algebra ideal generated by all singular vectors $\omega\neq 1$, and the unique irreducible quotient $\mathcal{M}_c/\mathcal{I}_c$ is denoted by $\mathcal{W}_{1+\infty,c}$. We denote the image of $J^l$ in $\cW_{1+\infty,c}$ by $j^l$. The cocycle \eqref{cocycle} is normalized so that $\cM_c$ is reducible if and only if $c\in \mathbb{Z}$. For a positive integer $n$ there is an isomorphism $\mathcal{W}_{1+\infty,n} \cong \mathcal{W}(\mathfrak{g}\mathfrak{l}_n)$, and in particular $\cW_{1+\infty,n}$ has a minimal strong generating set $\{j^0, j^1,\dots, j^{n-1}\}$ \cite{FKRW}. It is also known that $\cW_{1+\infty,-1}\cong \cW(\gg\gl_3)$, and is strongly generated by $\{j^0, j^1, j^2\}$ \cite{WaI}. This result was generalized in \cite{LI}; for all $n\geq 1$, $\cW_{1+\infty,-n}$ has a minimal strong generating set $\{j^0, j^1, \dots, j^{n^2+2n-1}\}$.

\section{The vertex algebra $\cV_c$}
The Lie algebra $\cD$ has an anti-involution $\sigma_{+,-1}$ given by $\sigma_{+,-1}(t) = t$ and $\sigma_{+,-1}(\partial_t) = -\partial_t$ \cite{KWY}. The subalgebra $\cD^+$ fixed by $-\sigma_{+,-1}$ has generators $$W^m_k = -\frac{1}{2}\big(t^{k+m}(\partial_t)^m + (-1)^{m+1} (\partial_t)^m t^{k+m}\big),\qquad k\in \mathbb{Z},\qquad m\in 1+ 2\mathbb{Z}_{\geq 0}.$$ Note that $\{W^1_k| k\in\mathbb{Z}\}$ spans a copy of the Virasoro Lie algebra. We use the same notation $\Psi$ to denote the restriction of the cocycle $\Psi$ to $\cD^+$. Let $\hat{\cD}^+$ be the corresponding central extension of $\cD^+$, which is a subalgebra of $\hat{\cD}$. Let $\cP^+ = \cP \cap \cD^+$, which is a subalgebra of $\hat{\cD}^+$ since $\Psi$ vanishes on $\cP^+$. Clearly $\cP^+$ has a basis $$\{ W^m_k| m+k \geq 0,\  k\in \mathbb{Z}, \ m\in 1+ 2\mathbb{Z}_{\geq 0} \}.$$ The induced $\hat{\cD}^+$-module $\cM_c^+ = U(\hat{\cD}^+)\otimes_{U(\hat{\cP}^+)} \C_0$ is known as the {\it vacuum $\hat{\cD}^+$-module of central charge $c$}. It has a vertex algebra structure with generators $$W^m(z) = \sum_{k\in\mathbb{Z}} W^m_k z^{-k-m-1}, \qquad m\in 1+2\mathbb{Z}_{\geq 0} $$ of weight $m+1$. The modes $W^m_k$ represent $\hat{\cD}^+$ on $\cM^+_c$, and we write
$$W^m(z) = \sum_{k\in\mathbb{Z}} W^m(k) z^{-k-1},$$ where $W^m(k) = W^m_{k-m}$. Clearly $\cM^+_c$ is a vertex subalgebra of $\cM_c$, and $\cM^+_c$ is freely generated by $\{W^m(z)|\ m\in 1+ 2\mathbb{Z}_{\geq 0} \}$; the monomials \begin{equation}\label{standmon} :\partial^{i_1}W^{m_1}(z)\cdots \partial^{i_r} W^{m_r}(z):,\end{equation} such that $m_1\leq \cdots \leq m_r$ and $i_a\leq i_b$ if $m_a = m_b$, form a basis for $\cM^+_c$.

Define a filtration $$(\mathcal{M}^+_c)_{(0)} \subset (\mathcal{M}^+_c)_{(1)}\subset \cdots$$ on $\mathcal{M}^+_c$ as follows: for $d\geq 0$, $(\mathcal{M}^+_c)_{(2d)}$ is spanned by monomials (\ref{standmon}) for $r\leq d$, and $(\mathcal{M}^+_c)_{(2d+1)} = (\mathcal{M}^+_c)_{(2d)}$. In particular, $\partial^k W^{2m+1}$ has degree $2$ for all $k,m\geq 0$. Equipped with this filtration, $\mathcal{M}^+_c$ lies in the category $\mathcal{R}$, and $\text{gr}(\mathcal{M}^+_c)$ is the polynomial algebra $\mathbb{C}[\partial^k W^{2m+1}|\  k,m\geq 0]$. For $k,m\geq 0$, each $W^{2m+1}(k) \in \cP^+$ gives rise to a derivation of degree zero on $\text{gr}(\mathcal{M}^+_c)$ coming from the vertex Poisson algebra structure, and this action of $\mathcal{P}^+$ on $\text{gr}(\mathcal{M}^+_c)$ is independent of $c$.

\begin{lemma} \label{weakfg} For each $c\in\mathbb{C}$, $\cM^+_c$ is generated as a vertex algebra by $W^3$. \end{lemma}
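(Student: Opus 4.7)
The plan is to reduce the problem to a Lie-algebraic computation in $\hat{\cD}^+$. Since $\cM^+_c$ is freely generated as a vertex algebra by $\{W^{2m+1}: m\geq 0\}$, it suffices to show that every generator $W^{2m+1}$ lies in the vertex subalgebra $\langle W^3\rangle \subset \cM^+_c$ generated by $W^3$; then $\langle W^3\rangle$ contains all strong generators of $\cM^+_c$, and closure under Wick products yields $\langle W^3\rangle = \cM^+_c$.

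I would use the good filtration on $\cM^+_c$, together with the weight grading, to pin down the form of the relevant circle products. The filtration axiom (\ref{goodii}) combined with $(\cM^+_c)_{(2k+1)}=(\cM^+_c)_{(2k)}$ forces $W^3\circ_n W^{2m+1}\in (\cM^+_c)_{(2)}$ for all $n\geq 0$; the weight of this element is $2m+5-n$, and the weight-$w$ slice of $(\cM^+_c)_{(2)}$ is spanned by $\{\partial^k W^{2\ell+1} : k+2\ell+2=w\}$, plus the constants when $w=0$. Consequently
\begin{equation*}
W^3 \circ_5 W^3 \;=\; c_1\, W^1, \qquad W^3 \circ_1 W^{2m+1} \;=\; \alpha_m\, W^{2m+3} \;+\; \sum_{\ell=0}^{m} \gamma_{m,\ell}\, \partial^{2(m+1-\ell)} W^{2\ell+1},
\end{equation*}
for scalars $c_1,\alpha_m,\gamma_{m,\ell}\in \mathbb{C}$ that are independent of the central charge $c$ (the cocycle $\Psi$ contributes to these OPEs only at their top poles, namely $\circ_7$ and $\circ_{2m+5}$ respectively, not at $\circ_5$ or $\circ_1$). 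Once we know $c_1\neq 0$ and $\alpha_m\neq 0$ for $m\geq 1$, a straightforward induction finishes the proof: $W^1=c_1^{-1}(W^3\circ_5 W^3)\in \langle W^3\rangle$ is the base case, and at each step the second identity expresses $W^{2m+3}$ as a $\mathbb{C}$-linear combination of $W^3\circ_1 W^{2m+1}$ and derivatives of $W^{2\ell+1}$ for $\ell\leq m$, all of which lie in $\langle W^3\rangle$ by the inductive hypothesis.

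The technical heart is therefore the two nonvanishing statements. Using the symmetrized basis $W^m_k=-\tfrac{1}{2}\bigl(t^{k+m}\partial_t^m+\partial_t^m t^{k+m}\bigr)$ and the commutator formula
\begin{equation*}
[t^a\partial_t^p,\; t^b\partial_t^q]=\sum_{\ell\geq 1}\Bigl[\tbinom{p}{\ell}b_{[\ell]}-\tbinom{q}{\ell}a_{[\ell]}\Bigr]t^{a+b-\ell}\partial_t^{p+q-\ell},\qquad b_{[\ell]}=b(b-1)\cdots(b-\ell+1),
\end{equation*}
the leading $\partial_t^{2m+3}$-contribution to $[W^3_k,W^{2m+1}_j]$ (from the $\ell=1$ term in each of the four pieces of the symmetrization) has a coefficient proportional to $3(j+2m+1)-(2m+1)(k+3)$, which does not vanish identically in $k,j$; translating back to the OPE yields $\alpha_m\neq 0$. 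For $c_1$, the $W^1$-contribution in $[W^3_k,W^3_j]$ arises only from the cross-terms such as $[t^{k+3}\partial_t^3,\partial_t^3 t^{j+3}]$ at $\ell=3$ in the above formula, and an analogous expansion shows this contribution is nonzero.

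The main obstacle is keeping track of the symmetrization bookkeeping cleanly, especially for $c_1$: the $W^1$-term is absent from the naive bracket $[t^{k+3}\partial_t^3,t^{j+3}\partial_t^3]$ and appears only through the cross terms in which $\partial_t^3$ has been commuted past a power of $t$, so one must verify that the four contributions making up $[W^3_k,W^3_j]$ do not cancel at the $\partial_t^1$-level. Once this verification is done, the induction proceeds uniformly and yields $\cM^+_c=\langle W^3\rangle$.
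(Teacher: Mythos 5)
Your argument is the same as the paper's: establish $W^1 \in \bra W^3\ket$ via $W^3\circ_5 W^3$, show $W^3\circ_1 W^{2m+1} = \alpha_m W^{2m+3} + (\text{derivatives of lower generators})$ with $\alpha_m\neq 0$, and induct on $m$. The filtration-plus-weight argument you use to pin down the possible shape of these circle products is a tidy way of organizing what the paper simply records as the outcome of an OPE calculation, and your observation that the cocycle cannot contribute to the relevant poles correctly explains why the statement is uniform in $c$.

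One step is justified by an insufficient criterion, although the conclusion is true. Writing $p,q$ for the shifted mode indices (so $W^3_k = W^3(k+3)$ and $W^{2m+1}_j = W^{2m+1}(j+2m+1)$), the coefficient of $W^{2m+3}(p+q-1)$ in $[W^3(p),W^{2m+1}(q)]$ receives contributions from \emph{two} circle products: $\binom{p}{1}$ times the $W^{2m+3}$-coefficient $\alpha_m$ of $W^3\circ_1 W^{2m+1}$, and $\binom{p}{0}$ times the $\partial W^{2m+3}$-coefficient $\mu$ of $W^3\circ_0 W^{2m+1}$ (which has weight $2m+5$ and so may contain $\partial W^{2m+3}$). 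The total is $(\alpha_m-\mu)p-\mu q$; hence if your Lie-algebra computation yields $Ap+Bq$, then $\alpha_m = A-B$, and a coefficient proportional to $p+q$ is nonvanishing yet forces $\alpha_m=0$. So ``does not vanish identically in $k,j$'' is not the right test --- you must check that the coefficient is not a multiple of the total degree. Your displayed expression $3(j+2m+1)-(2m+1)(k+3)$ does pass this finer test (since $3\neq -(2m+1)$), so the proof survives once the criterion is corrected. The same caveat applies to extracting $c_1$: the coefficient of $W^1(p+q-5)$ in $[W^3(p),W^3(q)]$ mixes contributions from $W^3\circ_n W^3$ for all $0\le n\le 5$ through the terms $\partial^{5-n}W^1$, and $c_1$ is only the piece not accounted for by the lower circle products.
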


\begin{proof} First, $W^1 = \frac{1}{360} W^3 \circ_5 W^3$, so $W^1$ lies in the subalgebra $\bra W^3 \ket$ generated by $W^3$. Next, an OPE calculation shows that for all $m>0$, \begin{equation} \label{raisingoperator} W^3 \circ_1 W^{2m+1} \equiv -(2m+2) W^{2m+3},\end{equation} modulo a linear combination of terms of the form $\partial^{2k} W^{2m+3 -2k}$ for $1\leq k\leq m+1$. It follows by induction on $m$ that each $W^{2m+1} \in \bra W^3 \ket$. \end{proof}

In particular, $\cM^+_c$ is a finitely generated vertex algebra. However, since $\cM^+_c$ is freely generated by $\{W^{2m+1}|\ m\geq 0\}$, $\cM^+_c$ is not {\it strongly} generated by any finite set. A weight-homogeneous element $\omega\in \cM^+_c$ is called a {\it singular vector} if $W^{2m+1}\circ_k \omega = 0$ for all $m\geq 0$ and $k>2m+1$. The maximal proper $\hat{\cD}^+$-submodule $\cI_c$ is the vertex algebra ideal generated by all singular vectors $\omega\neq 1$, and the Kac-Wang-Yan algebra $\cV_c$ is the unique irreducible quotient $\cM^+_c/\cI_c$. We denote the projection $\cM^+_c\ra \cV_c$ by $\pi_{c}$, and we use the notation \begin{equation}\label{lowercasej} w^{2m+1} = \pi_c (W^{2m+1}),\qquad m\geq 0.\end{equation} Clearly $\cV_c$ is generated as a vertex algebra by $w^3$. Since $\cM^+_c$ is reducible for $c\in \frac{1}{2}\mathbb{Z}$, there exist normally ordered polynomial relations among $\{w^{2m+1}|\ m\geq 0\}$ and their derivatives in this case. In \cite{WaIII}, Wang showed that $\cV_{1/2}$ and $\cV_1$ have minimal strong generating sets $\{w^1\}$ and $\{w^1, w^3\}$, respectively, and he conjectured that for each integer $n>1$, $\cV_{n/2}$ has a minimal strong generating set $\{w^1, w^3, \dots, w^{2n-1}\}$. It is also known that $\cV_{-1}$ has a minimal strong generating set $\{w^1, w^3, w^5\}$ \cite{BFH}, but to the best of our knowledge there are no other results or conjectures for negative $c\in \frac{1}{2}\mathbb{Z}$.

\section{The case $c=-n$ for $n\geq 1$}
For each integer $n\geq 1$, $\cV_{-n}$ admits a free field realization as the $\text{Sp}(2n)$-invariant subalgebra of the rank $n$ $\beta\gamma$-system $\cS(n)$, which was denoted by $\cF^{\otimes -n}$ in \cite{KWY}. The $\beta\gamma$-system was introduced in \cite{FMS}, and has even generators $\beta^{i}$, $\gamma^{i}$ for $i=1,\dots, n$, which satisfy $$\beta^i(z)\gamma^{j}(w)\sim \delta_{i,j} (z-w)^{-1},\qquad \gamma^{i}(z)\beta^j(w)\sim -\delta_{i,j} (z-w)^{-1},$$ \begin{equation}\label{betagamma} \beta^i(z)\beta^j(w)\sim 0,\qquad \gamma^i(z)\gamma^j (w)\sim 0.\end{equation}
We give $\cS(n)$ the conformal structure 
\begin{equation}\label{oneparameter} L = \frac{1}{2} \sum_{i=1}^n \big(:\beta^{i}\partial\gamma^{i}: - :\partial\beta^{i}\gamma^{i}:\big)\end{equation} of central charge $-n$, under which $\beta^{i}$, $\gamma^{i}$ are primary of weight $\frac{1}{2}$. The full automorphism group of $\cS(n)$ is $\text{Sp}(2n)$, and $\{\beta^{i}, \gamma^{i} | \ i=1,\dots,n\}$ spans a copy of the standard $\text{Sp}(2n)$-module $\mathbb{C}^{2n}$. There is a basis of $\cS(n)$ consisting of normally ordered monomials
\begin{equation}\label{basisofs} :\partial^{I_1} \beta^{1}\cdots \partial^{I_n} \beta^{n}\partial^{J_1} \gamma^{1}\cdots\partial^{J_n} \gamma^{n}: .\end{equation} In this notation, $I_k = (i^k_1,\dots, i^k_{r_k})$ and $J_k =(j^k_1,\dots, j^k_{s_k})$ are lists of integers satisfying $0\leq i^k_1\leq \cdots \leq i^k_{r_k}$ and  $0\leq j^k_1\leq \cdots \leq j^k_{s_k}$, and 
$$\partial^{I_k}\beta^{k} = \ :\partial^{i^k_1}\beta^{k} \cdots \partial^{i^k_{r_k}} \beta^{k}:,\qquad \partial^{J_k}\gamma^{k} = \ :\partial^{j^k_1}\gamma^{k} \cdots \partial^{j^k_{s_k}} \gamma^{k}:.$$ We have a $\mathbb{Z}_{\geq 0}$-grading \begin{equation}\label{grading} \cS(n) = \bigoplus_{d\geq 0} \cS(n)^{(d)},\end{equation} where $\cS(n)^{(d)}$ is spanned by monomials \eqref{basisofs} of total degree $ d = \sum_{k=1}^n r_k + s_k$. Finally, we define the filtration $\cS(n)_{(d)} = \bigoplus_{i=0}^d \cS(n)^{(i)}$. This filtration satisfies \eqref{goodi} and we have an isomorphism of $\text{Sp}(2n)$-modules
\begin{equation} \label{linassgrad} \cS(n) \cong  \text{gr}(\cS(n)),\end{equation}
and an isomorphism of graded commutative rings
\begin{equation} \label{genassgrad} \text{gr}(\cS(n))\cong \text{Sym} \bigoplus_{k\geq 0} U_k.\end{equation} Here $U_k$ is the copy of the standard $\text{Sp}(2n)$-module $\mathbb{C}^{2n}$ spanned by $\{\beta^{i}_k, \gamma^{i}_k |\ i=1,\dots, n\}$. In this notation, $\beta^{i}_k$ and $\gamma^{i}_k$ are the images of $\partial^k \beta^{i}(z)$ and $\partial^k\gamma^{i}(z)$ in $\text{gr}(\cS(n))$. The following result appears in Proposition 14.2 of \cite{KWY}.
\begin{thm} There is an isomorphism $\cV_{-n}\rightarrow \cS(n)^{\text{Sp}(2n)}$ given by
\begin{equation} \label{bgrealization} w^{2m+1} \mapsto \frac{1}{2} \sum_{i=1}^n \big(:\beta^{i}\partial^{2m+1} \gamma^{i}:- :\partial^{2m+1} \beta^{i} \gamma^{i}: \big),\qquad m\geq 0.\end{equation} 
\end{thm}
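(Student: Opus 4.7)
The plan is to establish the isomorphism in three stages: construct a vertex algebra homomorphism $\phi:\cM^+_{-n}\to\cS(n)$ obeying the prescribed formula on generators; show $\phi$ surjects onto $\cS(n)^{Sp(2n)}$; and identify $\ker\phi$ with $\cI_{-n}$.

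For the first stage, I would invoke the universal property of the vacuum module. Set $\widetilde W^{2m+1}(z):=\tfrac{1}{2}\sum_{i=1}^n\bigl(:\beta^i\partial^{2m+1}\gamma^i:-:\partial^{2m+1}\beta^i\gamma^i:\bigr)\in\cS(n)$. These have the correct weight $2m+2$ and are annihilated by positive modes on the vacuum. I would then compute the OPEs $\widetilde W^{2m+1}(z)\widetilde W^{2l+1}(w)$ using Wick's theorem on the $\beta\gamma$-relations \eqref{betagamma} and verify that the corresponding modes satisfy the bracket relations defining $\hat\cD^+$ with central term exactly $-n\cdot\Psi$, where $\Psi$ is the cocycle \eqref{cocycle}. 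The cleanest entry point is $\widetilde W^1=L$, the conformal vector \eqref{oneparameter}, which already encodes Virasoro central charge $-n$; the higher $\widetilde W^{2m+1}$ are then controlled by iterating the commutator relation corresponding to \eqref{raisingoperator} on the $\cS(n)$ side. Once this is done, the universal property of $\cM^+_{-n}$ yields a vertex algebra homomorphism $\phi$, and its image lies in $\cS(n)^{Sp(2n)}$ because each $\widetilde W^{2m+1}$ is manifestly built from the $Sp(2n)$-invariant symplectic pairing on $\mathbb{C}^{2n}$.

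For surjectivity onto $\cS(n)^{Sp(2n)}$, I would pass to the associated graded. Since $Sp(2n)$ is reductive and the filtration $\cS(n)_{(d)}$ is $Sp(2n)$-invariant, taking invariants commutes with $\text{gr}$, giving $\text{gr}\bigl(\cS(n)^{Sp(2n)}\bigr)\cong R=\bigl(\text{Sym}\bigoplus_{k\geq 0}U_k\bigr)^{Sp(2n)}$ via \eqref{linassgrad}--\eqref{genassgrad}. By Weyl's first fundamental theorem for $Sp(2n)$, $R$ is generated as a graded ring by the symplectic quadratic invariants $q_{a,b}=\sum_i(\beta^i_a\gamma^i_b-\beta^i_b\gamma^i_a)$ with $0\leq a<b$. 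The symbol of $\widetilde W^{2m+1}$ is a nonzero scalar multiple of $q_{0,2m+1}$, and since $\partial q_{a,b}=q_{a+1,b}+q_{a,b+1}$ acts via Pascal-type coefficients on these invariants, a short Vandermonde-style linear-algebra argument shows that the images of $\partial^k\widetilde W^{2m+1}$ for $k,m\geq 0$ span all $q_{a,b}$, hence generate $R$ as a $\partial$-ring. Lemma \ref{reconlem} then lifts this to strong generation of $\cS(n)^{Sp(2n)}$ by $\{\widetilde W^{2m+1}\}$, establishing surjectivity of $\phi$.

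To identify the kernel, I would appeal to simplicity. The $\beta\gamma$-system $\cS(n)$ is a simple vertex algebra, and the standard orbifold result for reductive group actions on simple vertex algebras ensures that $\cS(n)^{Sp(2n)}$ is itself simple. Therefore the surjection $\phi:\cM^+_{-n}\twoheadrightarrow\cS(n)^{Sp(2n)}$ has kernel equal to the unique maximal proper graded submodule $\cI_{-n}$, and $\phi$ descends to the asserted isomorphism $\cV_{-n}\xrightarrow{\sim}\cS(n)^{Sp(2n)}$ sending $w^{2m+1}\mapsto\widetilde W^{2m+1}$. The technical heart of the argument, and the main obstacle, is the OPE verification in the first stage: organizing the double contractions in $\widetilde W^{2m+1}(z)\widetilde W^{2l+1}(w)$ so that the anomaly is recognized precisely as $-n$ times the cocycle $\Psi$ requires careful bookkeeping of normalizations, and this is what guarantees that we really land in the vacuum module $\cM^+_{-n}$ at central charge exactly $-n$ rather than in some neighboring central extension.
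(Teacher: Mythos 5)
This theorem is not proved in the paper at all: it is imported verbatim from Proposition 14.2 of \cite{KWY}, so there is no in-paper argument to compare yours against line by line. Your three-stage outline is nevertheless a sound reconstruction of how such a free-field realization is established, and its architecture is consistent both with what \cite{KWY} actually do and with the machinery the paper sets up later (your surjectivity step via $\text{gr}$, Weyl's first fundamental theorem, and the change of variables between $\{\partial^k w^{2m+1}\}$ and $\{\omega_{a,b}\}$ is literally the content of \eqref{grisos} and \eqref{deca}--\eqref{lincomb}, and the kernel identification by maximality of $\cI_{-n}$ is correct once simplicity of the image is known). Two caveats. First, the two steps that carry all the actual content --- the verification that the modes of $\widetilde W^{2m+1}$ close on $\hat{\cD}^+$ with central term exactly $-n\Psi$, and the simplicity of $\cS(n)^{Sp(2n)}$ --- are named but not executed; as written this is a proof plan rather than a proof. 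Second, be careful about circularity in the simplicity step: the decomposition \eqref{kwydec} as stated in this paper already describes the multiplicity spaces as irreducible $\cV_{-n}$-modules, which presupposes the isomorphism you are proving. You need the version of the dual-pair decomposition that asserts the $M^{\nu}$ are irreducible modules over the commutant $\cS(n)^{Sp(2n)}$ itself (this is Theorem 13.2 of \cite{KWY}, or a general orbifold simplicity result for reductive group actions); applied to the trivial isotypic component it gives simplicity of $\cS(n)^{Sp(2n)}$ without assuming the conclusion. With those two points supplied, your argument goes through.
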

Since the action of $\text{Sp}(2n)$ on $\cS(n)$ preserves the grading \eqref{grading}, $\cV_{-n}$ is a graded subalgebra of $\cS(n)$. We write \begin{equation}\label{gradingv} \cV_{-n} = \bigoplus_{d\geq 0} ( \cV_{-n})^{(d)},\qquad (\cV_{-n})^{(d)} = \cV_{-n}\cap\cS(n)^{(d)},\end{equation} and define the corresponding filtration by $(\cV_{-n} )_{(d)} = \bigoplus_{i=0}^{d} (\cV_{-n} )^{(i)}$. Clearly \eqref{bgrealization} preserves conformal structures and is a morphism in the category $\cR$. 

The identification $\cV_{-n}\cong \cS(n)^{\text{Sp}(2n)}$ suggests an alternative strong generating set for $\cV_{-n}$ coming from classical invariant theory. We have \begin{equation} \label{grisos} \text{gr}(\cV_{-n}) \cong \text{gr}(\cS(n)^{\text{Sp}(2n)}) \cong \text{gr}(\cS(n))^{\text{Sp}(2n)} \cong (\text{Sym}\bigoplus_{k\geq 0} U_k)^{\text{Sp}(2n)}.\end{equation} The generators and relations for $(\text{Sym} \bigoplus_{k\geq 0} U_k )^{\text{Sp}(2n)}$ are given by Weyl's {\it first and second fundamental theorems of invariant theory} for the standard representation of $\text{Sp}(2n)$ (Theorems 6.1.A and 6.1.B of \cite{We}).

\begin{thm} \label{weylfft} For $k\geq 0$, let $U_k$ be the copy of the standard $\text{Sp}(2n)$-module $\mathbb{C}^{2n}$ with symplectic basis $\{x_{i,k}, y_{i,k}| \ i=1,\dots,n\}$. Then $(\text{Sym} \bigoplus_{k\geq 0} U_k )^{\text{Sp}(2n)}$ is generated by the quadratics \begin{equation}\label{weylgenerators} q_{a,b} = \frac{1}{2}\sum_{i=1}^n \big( x_{i,a} y_{i,b} - x_{i,b} y_{i,a}\big),\qquad a,b \geq 0. \end{equation} Note that $q_{a,b} = -q_{b,a}$. Let $\{Q_{a,b}|\ a,b\geq 0\}$ be commuting indeterminates satisfying $Q_{a,b} = -Q_{b,a}$. The kernel $I_n$ of the homomorphism \begin{equation}\label{weylquot} \mathbb{C}[Q_{a,b}]\ra (\text{Sym} \bigoplus_{k\geq 0} U_k)^{\text{Sp}(2n)},\qquad Q_{a,b}\mapsto q_{a,b},\end{equation} is generated by the degree $n+1$ Pfaffians $p_I$, which are indexed by lists $I = (i_0,\dots,i_{2n+1})$ of integers satisfying \begin{equation}\label{ijineq} 0\leq i_0<\cdots < i_{2n+1}.\end{equation} For $n=1$ and $I = (i_0, i_1, i_2, i_3)$, we have $$p_I = q_{i_0, i_1} q_{i_2, i_3} - q_{i_0, i_2} q_{i_1, i_3}+q_{i_0, i_3} q_{i_1, i_2},$$ and for $n>1$ they are defined inductively by \begin{equation} \label{pfaffinduction} p_I =  \sum_{r=1}^{2n+1} (-1)^{r+1} q_{i_0,i_r} p_{I_r},\end{equation} where $I_r = (i_1,\dots, \widehat{i_r},\dots, i_{2n+1})$ is obtained from $I$ by omitting $i_0$ and $i_r$. \end{thm}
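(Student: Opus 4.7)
The plan is to reduce the statement to finitely many copies of the defining representation $V = \mathbb{C}^{2n}$ and then invoke two classical pillars: Brauer-type Schur-Weyl duality for $Sp(2n)$ (for the FFT) and the ideal theory of Pfaffian rank varieties (for the SFT). For each $N \geq 1$, set $V_N = \bigoplus_{k=0}^{N-1} U_k \cong V^{\oplus N}$; both the invariant ring and the candidate relation ideal are unions over $N$ of their truncations to the variables $q_{a,b}$ with $a,b < N$, so it suffices to treat each $N$ separately and then pass to the direct limit.

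For the FFT, by the Aronhold polarization principle it is enough to show that every multilinear $Sp(2n)$-invariant on $V^{\otimes r}$ is a linear combination of products of the symplectic contractions $\omega(v_i,v_j)$. Using the self-duality $V^* \cong V$ afforded by $\omega$, such multilinear invariants correspond to $Sp(2n)$-invariants in $V^{\otimes r}$, and by the Brauer first fundamental theorem the space $\mathrm{End}_{Sp(2n)}(V^{\otimes r/2})$ is spanned by the images of Brauer diagrams in which strands are matched pairwise. Translating diagrams into contractions yields exactly the statement that products of the pairings $\omega(v_i,v_j)$ span the multilinear invariants, and by restitution this shows that the $q_{a,b}$ generate $(\mathrm{Sym}\,V_N)^{Sp(2n)}$ as a commutative ring.

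For the SFT, observe that the map $(v_0,\dots,v_{N-1}) \mapsto \bigl(\omega(v_a,v_b)\bigr)_{a,b}$ lands in the variety $X_{N,n}$ of skew-symmetric $N\times N$ matrices of rank at most $2n$, since any Gram matrix of vectors drawn from a $2n$-dimensional symplectic space has rank at most $2n$. Set-theoretically, $X_{N,n}$ is cut out by the $(2n+2)\times(2n+2)$ principal Pfaffians $p_I$, giving the inclusion $\langle p_I\rangle \subseteq I_n$. To upgrade this to equality and simultaneously identify the coordinate ring $\mathbb{C}[X_{N,n}]$ with $(\mathrm{Sym}\,V_N)^{Sp(2n)}$, I would run a De Concini-Procesi-style straightening-law argument: construct a basis of $\mathbb{C}[Q_{a,b}]/\langle p_I\rangle$ consisting of ``standard Pfaffian monomials'' indexed by semistandard tableaux of shape inscribed in a strip of height $2n$, and match its graded character term by term against the $GL(N)$-weight decomposition of $(\mathrm{Sym}\,V_N)^{Sp(2n)}$ supplied by the symplectic Cauchy identity.

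The main obstacle will be this straightening step: proving that the Pfaffian ideal is radical (equivalently prime) and that the proposed standard monomials furnish an actual basis of the quotient, which requires genuine combinatorial bookkeeping on the Pfaffian expansion \eqref{pfaffinduction} to rewrite any monomial in the $Q_{a,b}$ modulo $\langle p_I\rangle$ in standard form. The set-theoretic description of $X_{N,n}$, the polarization reduction, and the Brauer duality input are all classical and essentially formal by comparison; once the straightening is established, taking the union over $N \to \infty$ extends both assertions to the infinite-variable setting stated in the theorem.
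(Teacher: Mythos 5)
This statement is not proved in the paper at all: it is Weyl's first and second fundamental theorems for the standard representation of $Sp(2n)$, quoted verbatim as Theorems 6.1.A and 6.1.B of \cite{We}, so there is no in-paper argument to compare against. Judged on its own, your sketch follows the standard modern route and its overall architecture is sound: the reduction to finitely many copies $V^{\oplus N}$ and the passage to the direct limit is harmless since every invariant involves only finitely many of the $U_k$; the polarization/restitution reduction to multilinear invariants is Weyl's Theorem 2.5A (quoted later in the paper as its Theorem 13.4); and the identification of the relation ideal with the ideal of the Pfaffian rank variety, settled by a De Concini--Procesi straightening law, is the accepted proof of the second fundamental theorem. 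Two cautions. First, invoking ``Brauer's first fundamental theorem'' that $\mathrm{End}_{Sp(2n)}(V^{\otimes r})$ is spanned by Brauer diagrams is, via the self-duality $V\cong V^*$, literally equivalent to the multilinear FFT you are trying to prove; in most references (e.g.\ Goodman--Wallach) that spanning statement is \emph{deduced from} the FFT, so you must either supply an independent proof of the Brauer surjectivity (such proofs exist, e.g.\ via reduction to the $GL_n$ FFT and the Capelli/Cayley $\Omega$-process, or via the theory of dual pairs) or replace this step by a direct argument; as written the FFT half is circular relative to the standard literature. Second, you correctly flag that the entire content of the SFT lies in the straightening step --- primality of the Pfaffian ideal and the standard-monomial basis of the quotient --- and that part is only announced, not carried out; set-theoretic vanishing of the $p_I$ on $X_{N,n}$ gives only $\langle p_I\rangle\subseteq I_n$ up to radical, so without the straightening (or an alternative proof that $\langle p_I\rangle$ is prime and that the map to $(\mathrm{Sym}\,V_N)^{Sp(2n)}$ is an isomorphism onto its image) the equality $\langle p_I\rangle=I_n$ is not established.
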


Under \eqref{grisos}, the generators $q_{a,b}$ correspond to strong generators
\begin{equation}\label{newgenomega} \omega_{a,b} = \frac{1}{2}\sum_{i=1}^n \big(:\partial^a\beta^{i} \partial ^b \gamma^{i}: -:\partial^b\beta^{i} \partial ^a \gamma^{i}:\big)\end{equation} of $\cV_{-n}$. In this notation, $w^{2m+1} = \omega_{0,2m+1}$ for $m\geq 0$. Let $A_m$ denote the vector space spanned by $\{\omega_{a,b}|\  a+b = m\}$. Clearly $A_1$ is spanned by $w^1$ and for $m>0$, \begin{equation}\label{decompofa} A_{2m+1} = \partial (A_{2m})\oplus \bra w^{2m+1}\ket =  \partial^2 (A_{2m-1})\oplus \bra w^{2m+1}\ket ,\end{equation} where $\bra w^{2m+1}\ket$ is the linear span of $w^{2m+1}$. Similarly, $A_2$ is spanned by $\partial w^1$ and for $m>0$, \begin{equation}\label{decompofai} A_{2m+2} = \partial^2(A_{2m})\oplus \bra \partial w^{2m+1}\ket =  \partial^3 (A_{2m-1})\oplus \bra \partial w^{2m+1}\ket.\end{equation}  It is easy to see that $\{\partial^{2i} w^{2m+1-2i}|\  0\leq i\leq m\}$ and $\{\partial^{2i+1} w^{2m+1-2i}|\ 0\leq i\leq m\}$ are bases of $A_{2m+1}$ and $A_{2m+2}$, respectively. For $a+b = 2m+1$ and $c+d = 2m+2$, $\omega_{a,b}$ and $\omega_{c,d}$ can be expressed uniquely in the form \begin{equation}\label{lincomb} \omega_{a,b} =\sum_{i=0}^m \lambda_i \partial^{2i}w^{2m+1-2i},\qquad \omega_{c,d} =\sum_{i=0}^m \mu_i \partial^{2i+1}w^{2m+1-2i}\end{equation} for constants $\lambda_i,\mu_i$. Hence $\{\partial^k w^{2m+1}|\ k, m\geq 0\}$ and $\{\omega_{a,b}|\ 0\leq a < b\}$ are related by a linear change of variables. Using (\ref{lincomb}), which holds in $\cV_{-n}$ for all $n$, we can define an alternative strong generating set $\{\Omega_{a,b}| \ 0\leq a< b\}$ for $\cM^+_{-n}$ by the same formula: for $a+b = 2m+1$ and $c+d = 2m+2$,
$$\Omega_{a,b} =\sum_{i=0}^m \lambda_i \partial^{2i}W^{2m+1-2i},\qquad  \Omega_{c,d} =\sum_{i=0}^m \mu_i \partial^{2i+1}W^{2m+1-2i}.$$ Clearly $\pi_{-n}(\Omega_{a,b}) = \omega_{a,b}$. We also denote the span of $\{\Omega_{a,b}|\ a+b = m\}$ by $A_m$ when no confusion can arise.

\section{The structure of the ideal $\cI_{-n}$}

Under the identifications $$\text{gr}(\cM^+_{-n})\cong \mathbb{C}[Q_{a,b}],\qquad \text{gr}(\cV_{-n})\cong ( \text{Sym} \bigoplus_{k\geq 0} U_k)^{\text{Sp}(2n)}\cong \mathbb{C}[q_{a,b}]/I_n,$$ $\text{gr}(\pi_{-n})$ is just the quotient map \eqref{weylquot}. 
\begin{lemma} \label{lemddef} For each $I = (i_0,\dots, i_{2n+1})$ satisfying \eqref{ijineq}, there exists a unique element \begin{equation} \label{ddef} P_{I}\in (\cM^+_{-n})_{(2n+2)}\cap \cI_{-n}\end{equation} of weight $n+1 +\sum_{a=0}^{2n+1} i_a$, satisfying \begin{equation}\label{uniquedij} \phi_{2n+2}(P_{I}) = p_{I}.\end{equation} These elements generate $\cI_{-n}$ as a vertex algebra ideal. \end{lemma}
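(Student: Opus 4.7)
My plan is to apply the ideal reconstruction principle (Lemma \ref{idealrecon}) to the surjective morphism $\pi_{-n}\colon \cM^+_{-n}\to \cV_{-n}$ in the category $\cR$, using Weyl's second fundamental theorem (Theorem \ref{weylfft}) to supply the required generators at the associated graded level. Under the identification \eqref{grisos}, namely $\text{gr}(\cM^+_{-n})\cong \mathbb{C}[Q_{a,b}\mid 0\le a<b]$ where $Q_{a,b}=\phi_2(\Omega_{a,b})$ has filtration degree $2$ and weight $a+b+1$, and $\text{gr}(\cV_{-n})\cong \mathbb{C}[q_{a,b}]/I_n$, the induced map $\text{gr}(\pi_{-n})$ is exactly the classical quotient map. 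Hence $J:=\ker(\text{gr}(\pi_{-n}))$ corresponds to $I_n$, which by Theorem \ref{weylfft} is generated as an ordinary ideal, and \emph{a fortiori} as a $\partial$-ideal, by the Pfaffians $p_I$.

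Each $p_I$ is homogeneous of $Q$-degree $n+1$, hence of filtration degree $2n+2$, and of weight $(n+1)+\sum_{a=0}^{2n+1}i_a$. Lemma \ref{idealrecon} therefore produces vertex operators $P_I\in(\cM^+_{-n})_{(2n+2)}\cap \cI_{-n}$ with $\phi_{2n+2}(P_I)=p_I$ such that $\{P_I\}$ generates $\cI_{-n}$ as a vertex algebra ideal. Since $\cM^+_{-n}$ is weight graded and $\pi_{-n}$ respects this grading, $\cI_{-n}$ is weight graded; because $p_I$ is weight homogeneous I may replace each $P_I$ by its weight $(n+1)+\sum i_a$ component, which is still a lift lying in $\cI_{-n}\cap (\cM^+_{-n})_{(2n+2)}$.

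For uniqueness I would show $(\cM^+_{-n})_{(2n)}\cap \cI_{-n}=0$. This suffices because if $P_I,P'_I$ both satisfy the conditions of the statement, then $\phi_{2n+2}(P_I-P'_I)=0$ forces $P_I-P'_I\in(\cM^+_{-n})_{(2n+1)}=(\cM^+_{-n})_{(2n)}$, while still lying in $\cI_{-n}$. For the vanishing: if $A\in(\cM^+_{-n})_{(2n)}\cap \cI_{-n}$, then $\phi_{2n}(A)\in J$ is a polynomial of $Q$-degree at most $n$. A short calculation from \eqref{newgenomega} shows $\partial Q_{a,b}=Q_{a+1,b}+Q_{a,b+1}$, so $\partial$ preserves $Q$-degree; consequently the $\partial$-ideal generated by the degree $n+1$ Pfaffians has every homogeneous component of $Q$-degree at least $n+1$. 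Thus $\phi_{2n}(A)=0$, so $A\in(\cM^+_{-n})_{(2n-2)}\cap\cI_{-n}$. Iterating gives $A\in(\cM^+_{-n})_{(0)}\cap\cI_{-n}=\mathbb{C}\cap\cI_{-n}=0$, using that $\cI_{-n}$ is proper.

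The argument is a direct application of the reconstruction framework, and I do not anticipate a serious obstacle at this point in the paper. The substantive content sits upstream, in the free field realization \eqref{bgrealization} and the chain of isomorphisms \eqref{grisos}, which together identify $\text{gr}(\pi_{-n})$ with the Weyl quotient map and pin down the filtration and weight gradings on both sides. Once these identifications are granted, everything reduces to bookkeeping: the Pfaffian degree fixes the filtration level of $P_I$, weight homogeneity cleans up the lift, and $\partial$-invariance of $Q$-degree forces uniqueness. The genuinely hard work — analyzing the quantum corrections $R_I$ that distinguish $P_I$ from $p_I$ and extracting decoupling relations — lies in subsequent sections and is not needed here.
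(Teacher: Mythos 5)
Your proposal is correct and follows essentially the same route as the paper: apply Lemma \ref{idealrecon} to the surjection $\pi_{-n}$, identify $\ker(\text{gr}(\pi_{-n}))$ with the Pfaffian ideal $I_n$ via Theorem \ref{weylfft}, and deduce uniqueness from the absence of relations in degree below $2n+2$. Your expanded uniqueness argument (iterating $\phi_{2k}(A)=0$ down to degree zero) and the explicit weight-homogenization step just fill in details the paper leaves implicit.
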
 
\begin{proof} Clearly $\pi_{-n}$ maps each filtered piece $(\cM^+_{-n})_{(k)}$ onto $(\cV_{-n})_{(k)}$, so the hypotheses of Lemma \ref{idealrecon} are satisfied. Since $I_{n} = \text{Ker} (\text{gr}(\pi_{-n}))$ is generated by the Pfaffians $p_{I}$, we can apply Lemma \ref{idealrecon} to find $P_{I}\in (\cM^+_{-n})_{(2n+2)}\cap \cI_{-n}$ satisfying $\phi_{2n+2}(P_{I}) = p_{I}$, such that $\{P_{I}\}$ generates $\cI_{-n}$. If $P'_{I}$ also satisfies \eqref{uniquedij}, we would have $P_{I} - P'_{I}\in (\cM^+_{-n})_{(2n)} \cap \cI_{-n}$. Since there are no relations in $\cV_{-n}$ of degree less than $2n+2$, we have $P_{I} - P'_{I}=0$. \end{proof}
Let $\bra P_I \ket$ denote the vector space with basis $\{P_I\}$ where $I$ satisfies \eqref{ijineq}. We have $\bra P_I \ket = (\cM^+_{-n})_{(2n+2)}\cap \cI_{-n}$, and clearly $\bra P_I \ket$ is a module over the Lie algebra $\cP^+\subset \hat{\cD}^+$ generated by $\{W^{2m+1}(k) |\ m,k \geq 0\}$, since $\cP^+$ preserves both the filtration on $\cM^+_{-n}$ and the ideal $\cI_{-n}$. It will be convenient to work with the following generating set for $\cP^+$,
$$\{\Omega_{a,b} (a+b-w)|\ 0\leq a< b,\  a+b-w\geq 0\}.$$ Note that $\Omega_{a,b}(a+b-w)$ is homogeneous of weight $w$. The action of $\cP^+$ by derivations of degree zero on $\text{gr}(\cM^+_{-n})$ coming from the vertex Poisson algebra structure is independent of $n$, and is specified by the action on the generators $\Omega_{l,m}$. We compute \begin{equation} \label{actionp} \Omega_{a,b}(a+b-w) (\Omega_{l,m}) = \lambda_{a,b,w,l} (\Omega_{l+w,m}) + \lambda_{a,b,w,m} ( \Omega_{l,m+w}),\end{equation} where
\begin{equation} \label{actionlambdap}\lambda_{a,b,w,l}  =  \bigg\{ \begin{matrix}  (-1)^{b+1} \frac{(b+l)!}{2(l+w-a)!} - (-1)^{a+1} \frac{(a+l)!}{2(l+w-b)!} & l+w-a \geq 0 \cr & \cr 0 & l+w-a <0 \end{matrix}.\end{equation}
The action of $\cP^+$ on $\bra P_I \ket$ is by \lq\lq weighted derivation" in the following sense. Given $I = (i_0,\dots,i_{2n+1})$ and $p= \Omega_{a,b}(a+b-w)\in \cP^+$, we have \begin{equation} \label{weightederivation} p(P_{I}) = \sum_{r=0}^{2n+1} \lambda_r P_{I^r},\end{equation} for lists $I^r = (i_0,\dots, i_{r-1}, i_r + w,i_{r+1},\dots, i_{2n+1})$, and constants $\lambda_r$. If $i_r + w = i_s$ for some $s \in \{1,\dots, 2n+1\}$ we have $\lambda_r = 0$, and otherwise $\lambda_r = (-1)^k \lambda_{a,b,w,i_r}$, where $k$ is the number of transpositions required to transform $I^r$ into an increasing list, as in \eqref{ijineq}.

For each $n\geq 1$, there is a distinguished element $P_0\in \bra P_I \ket$, defined by $$P_0 = P_{I},\qquad I = (0,1,\dots,2n+1).$$ It is the unique element of $\cI_{-n}$ of minimal weight $2(n+1)^2$, and hence is a singular vector in $\cM^+_{-n}$. 

\begin{thm} \label{uniquesv} $P_0$ generates $\mathcal{I}_{-n}$ as a vertex algebra ideal.\end{thm}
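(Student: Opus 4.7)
The strategy is to reduce the theorem to the assertion that $P_0$ generates $\bra P_I\ket$ as a $\cP^+$-module, and to verify this by an induction using the explicit polynomial structure of the coefficients $\lambda_{a,b,w,l}$ in \eqref{actionlambdap}.

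Let $\cJ\subseteq\cI_{-n}$ denote the vertex algebra ideal generated by $P_0$; by Lemma \ref{lemddef} it suffices to show $P_I\in\cJ$ for every strictly increasing list $I$. Since $\cJ$ is preserved by every Fourier mode of every vertex operator in $\cM^+_{-n}$, in particular by $\cP^+$, and since $\bra P_I\ket=(\cM^+_{-n})_{(2n+2)}\cap\cI_{-n}$ is likewise preserved by $\cP^+$, the intersection $\cJ\cap\bra P_I\ket$ is a $\cP^+$-submodule of $\bra P_I\ket$. The theorem thus reduces to showing that $\bra P_I\ket$ is generated by $P_0$ as a $\cP^+$-module. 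This reformulation is clarified by noting that \eqref{weightederivation} identifies $\bra P_I\ket$ with the $(2n+2)$-nd exterior power $\Lambda^{2n+2}(V_\infty)$ of the auxiliary module $V_\infty=\bigoplus_{l\geq 0}\mathbb{C}e_l$ via $P_I\leftrightarrow e_{i_0}\wedge\cdots\wedge e_{i_{2n+1}}$, with each $\Omega_{a,b}(a+b-w)\in\cP^+$ acting as the exterior-power derivation induced by the single-particle raising operator $T_{a,b,w}:e_l\mapsto\lambda_{a,b,w,l}\,e_{l+w}$. Under this identification $P_0$ is the vacuum $e_0\wedge e_1\wedge\cdots\wedge e_{2n+1}$, and one must show that iterated applications of the $T_{a,b,w}$'s span $\Lambda^{2n+2}(V_\infty)$ starting from this vacuum.

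Induct on $\sum_r i_r$, with base case $I=I_0$. Given $I\neq I_0$, choose a position $r_0$ and a weight $w\geq 1$ so that the list $I'$ obtained from $I$ by replacing $i_{r_0}$ with $i_{r_0}-w$ remains strictly increasing; then $P_{I'}\in\cJ$ by induction. Applying $\Omega_{a,b}(a+b-w)$ to $P_{I'}$ produces, via \eqref{weightederivation}, a combination of $P_{(I')^s}$'s whose coefficients at the positions $s$ surviving the collision convention are $\pm\lambda_{a,b,w,i'_s}$. By \eqref{actionlambdap} each $\lambda_{a,b,w,l}$ is a polynomial in $l$, and the span of $\{\lambda_{a,b,w,l}\}_{a+b\geq w}$ inside $\mathbb{C}[l]$ contains enough polynomials to separate any finite set of distinct nonnegative integer values of $l$; consequently a suitable linear combination $\sum_{a,b} c_{a,b}\,\Omega_{a,b}(a+b-w)$ applied to $P_{I'}$ equals $P_I$, which places $P_I$ in $\cJ$ and closes the induction.

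The main obstacle is the polynomial-separation claim. For $w=1$, direct computation shows that $\lambda_{a,b,1,l}$ is an even polynomial in $l+1$, hence lies in $\mathbb{C}[(l+1)^2]\subset\mathbb{C}[l]$, and that the span of $\{\lambda_{a,b,1,l}\}_{a+b\geq 1}$ exhausts $\mathbb{C}[(l+1)^2]$; since $l\mapsto(l+1)^2$ is injective on $\mathbb{Z}_{\geq 0}$, Lagrange interpolation inside $\mathbb{C}[(l+1)^2]$ separates any finite collection of $l$-values and yields the required coefficient combinations. For each $w\geq 2$ an analogous parity analysis places the $\lambda_{a,b,w,l}$'s in a subring of $\mathbb{C}[l]$ generated by $(l+(w+1)/2)^{2}$ (with a fixed parity factor in $l+(w+1)/2$ determined by $w$), on which the same injectivity argument produces the separation and closes the induction. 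Because $w\geq 1$ can always be chosen in the first place (any $I\neq I_0$ admits some entry with $i_{r_0}>r_0$, and the value $w=i_{r_0}-i_{r_0-1}-1$ or smaller will work), every $P_I$ is reached, completing the proof.
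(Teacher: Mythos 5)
Your proof is correct and follows the same architecture as the paper's: reduce to showing that $\bra P_I\ket$ is generated by $P_0$ as a $\cP^+$-module, then induct by lowering one entry of $I$ and hitting $P_{I'}$ with an operator of $\cP^+$ that acts as the ``indicator'' raising map $e_{i_{r_0}-w}\mapsto e_{i_{r_0}}$ on the relevant finite range. The one place you diverge is in how you justify that such an indicator operator exists: the paper isolates this as Lemma \ref{third} and proves it by showing a matrix $M^w$ of $\lambda$-values is invertible, always with $w=1$, whereas you allow general $w$ and argue via the parity structure of $\lambda_{0,b,w,l}$ as an even (or $u$ times even) polynomial in $u=l+(w+1)/2$ together with Lagrange interpolation in $u^2$; I checked this parity claim and it is right, and since $u>0$ on $\mathbb{Z}_{\geq 0}$ the interpolation does separate distinct entries of $I'$. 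Two cosmetic points: you should restrict the interpolation family to $a=0$ (i.e., the operators $W^{2k+1}(2k+1-w)$), since for $a>0$ the cutoff $l+w-a<0$ in \eqref{actionlambdap} makes $\lambda_{a,b,w,\cdot}$ only eventually polynomial; and you should note that the sign $(-1)^k$ in \eqref{weightederivation} is $+1$ for the surviving term because $(I')^{r_0}=I$ is already increasing. Neither affects the validity of the argument.
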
 

We need a preliminary lemma in order to prove this statement. For simplicity of notation, we take $n=1$, but our lemma holds for any $n$. In this case, $\cS= \cS(n)$ is generated by $\beta, \gamma$. Recall that $\beta_j$, $\gamma_j$ denote the images of $\partial^j\beta$, $\partial^j\gamma$ in $\text{gr}(\cS)$, respectively. Let $W\subset \text{gr}(\cS)$ be the vector space with basis $\{\beta_j, \gamma_j|\ j\geq 0\}$, and for each $m\geq 0$, let $W_m$ be the subspace with basis $\{\beta_j, \gamma_j |\ 0\leq j\leq m\}$. Let $\phi:W\ra W$ be a linear map of weight $w\geq 1$, such that \begin{equation}\label{arbmap} \phi(\beta_j) = c_j \beta_{j+w},\qquad \phi(\gamma_j) = c_j \gamma_{j+w},\qquad c_j \in \mathbb{C}.\end{equation} For example, the restriction of $w^{2k+1}(2k+1-w)$ to $W$ is such a map for $2k+1-w\geq 0$.

\begin{lemma} \label{third} Fix $w\geq 1$ and $m\geq 0$, and let $\phi$ be a linear map satisfying (\ref{arbmap}). Then the restriction $\phi \big|_{W_m}$ can be expressed uniquely as a linear combination of the operators $w^{2k+1}(2k+1-w)\big|_{W_m}$ for $0\leq 2k+1-w \leq 2m+1$.\end{lemma}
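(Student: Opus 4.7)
The plan has three steps. First, I use $Sp(2n)$-equivariance of the modes $w^{2k+1}(l)$ to pin down the target space. Decompose $W = \bigoplus_{j\geq 0} W^{(j)}$ with $W^{(j)} = \text{span}\{\beta_j,\gamma_j\}$, each a copy of the standard $Sp(2)$-module. Since $w^{2k+1}\in\cV_{-1}= \cS^{Sp(2)}$ is invariant, its modes commute with the $Sp(2)$-action on $\cS$, and the induced derivations on $\text{gr}(\cS)$ are $Sp(2)$-equivariant. Schur's lemma on the irreducible $2$-dimensional $W^{(j)}$ then forces any $Sp(2)$-equivariant weight-$w$ map $W_m\to W$ to act on each $W^{(j)}$, $0\leq j\leq m$, by a single scalar $c_j$ that is shared by $\beta_j\mapsto c_j\beta_{j+w}$ and $\gamma_j\mapsto c_j\gamma_{j+w}$. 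Thus maps satisfying \eqref{arbmap} form an $(m+1)$-dimensional space, and the $m+1$ operators $w^{2k+1}(2k+1-w)|_{W_m}$ for $0\leq 2k+1-w\leq 2m+1$ all live in it. The lemma therefore reduces to linear independence of these $m+1$ operators.

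Second, I compute the scalars $c_{k,j}$ defined by $w^{2k+1}(2k+1-w)\beta_j = c_{k,j}\beta_{j+w}$ using the free field realization \eqref{bgrealization}. Working from the basic OPE $\beta(z)\gamma(w)\sim (z-w)^{-1}$, the iterated Leibniz-type identity
\[
\omega\circ_l \partial^j\beta = \sum_{i=0}^{\min(j,l)}\binom{j}{i}\tfrac{l!}{(l-i)!}\,\partial^{j-i}(\omega\circ_{l-i}\beta),
\]
and a single application of the Vandermonde--Chu convolution to collapse the resulting double binomial sum, I expect the closed form
\[
c_{k,j} \;=\; \tfrac12\bigl[\,j^{\underline N} + (j+2k+1)^{\underline N}\,\bigr],\qquad N := 2k+1-w,
\]
where $x^{\underline N} = x(x-1)\cdots(x-N+1)$ is the falling factorial. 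In particular $c_{k,j}$ is a monic polynomial in $j$ of degree $N$.

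Third, I exploit the symmetry $(-j-w-1)^{\underline N}=(-1)^N(j+2k+1)^{\underline N}$ and $(N-j-1)^{\underline N} = (-1)^N j^{\underline N}$, from which $c_{k,j}$ acquires the overall sign $(-1)^N$ under $j\mapsto -j-w-1$. In the shifted variable $y = j+(w+1)/2$ this is simply $y\mapsto -y$, so $c_{k,j}$ is an even or odd polynomial in $y$ of degree $N$ according to the parity of $N$. For $k$ in the allowed range, $N$ takes the $m+1$ values $\nu,\nu+2,\ldots,\nu+2m$ of a fixed parity $\nu\in\{0,1\}$ (determined by the parity of $w$); after factoring out $y^\nu$ the $c_{k,j}$ become polynomials in $y^2$ of distinct degrees $0,1,\ldots,m$. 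Evaluating at $j = 0,1,\ldots,m$ gives $m+1$ distinct positive values of $y$, hence distinct positive values of $y^2$, so the resulting $(m+1)\times(m+1)$ matrix is Vandermonde in $y^2$ and therefore invertible; this yields the desired linear independence.

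The main obstacle is the closed-form evaluation of $c_{k,j}$ in the second step: extracting a usable formula from the combinatorial OPE expansion requires identifying the correct Vandermonde--Chu collapse among the possible rearrangements. Once that identity is in hand, the symmetry observation and the reduction to an ordinary Vandermonde matrix are routine.
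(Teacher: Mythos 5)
Your argument is correct, and although it shares the paper's overall skeleton---both proofs identify the space of maps satisfying \eqref{arbmap} on $W_m$ as $(m+1)$-dimensional and reduce the lemma to the invertibility of the $(m+1)\times(m+1)$ matrix of coefficients $c_{k,i}$---your key step is genuinely different. Your closed form $c_{k,j}=\tfrac12\bigl[j^{\underline N}+(j+2k+1)^{\underline N}\bigr]$ with $N=2k+1-w$ agrees exactly with the paper's \eqref{actionlambdap} specialized to $a=0$, $b=2k+1$, so the hedged ``I expect'' in your second step is harmless: the formula is already available via \eqref{actionofp} and need not be rederived from the OPE. Where the two proofs diverge is in showing the matrix is invertible. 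The paper asserts this from the positivity of the consecutive $2\times 2$ minors of $M^w$, which is quite terse (positivity of $2\times 2$ minors alone is not a sufficient criterion for invertibility; one needs a total-positivity argument \`a la Fekete involving minors of all orders, which is not spelled out). Your route---the symmetry $y\mapsto -y$ in the shifted variable $y=j+(w+1)/2$ makes each $c_{k,\cdot}$ a monic even or odd polynomial of degree $N$ in $y$, so after stripping $y^{\nu}$ the columns become monic polynomials in $y^{2}$ of the distinct degrees $0,1,\dots,m$ evaluated at the distinct positive points $y_0^2<\cdots<y_m^2$, whence unitriangular column operations reduce the matrix to an honest Vandermonde---is complete and self-contained, and arguably more airtight than the paper's. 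Your Schur's-lemma framing of the first reduction is likewise a clean substitute for the paper's direct computation \eqref{actionofp}. In short: same reduction, but a different and more rigorous proof of the invertibility that is the heart of the lemma.
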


\begin{proof} Suppose first that $w$ is even, and let $k_j = j+ \frac{w}{2}$, for $j=0,\dots,m$. In this notation, we need to show that $\phi \big|_{W_m}$ can be expressed uniquely as a linear combination of the operators $w^{2k_j+1}(2j+1)\big|_{W_m}$ for $j=0,\dots,m$. We calculate 
\begin{equation}\label{actionofp} w^{2k_j+1}(2j+1)(\beta_i) = \lambda_{0,2k_j+1,w,i} (\beta_{i+w}),\qquad w^{2k_j+1}(2j+1)(\gamma_i) = \lambda_{0,2k_j+1,w,i} (\gamma_{i+w}),\end{equation} where $\lambda_{0,2k_j+1,w,i}$ is given by \eqref{actionlambdap}. Let $M^w$ be the $(m+1)\times(m+1)$ matrix with entries $M^w_{i,j} =  \lambda_{0,2k_j+1,w,i}$, for $i,j = 0,\dots,m$. Let ${\bf c}\in \mathbb{C}^{m+1}$ be the column vector with transpose $(c_0,\dots,c_m)$. Given an arbitrary linear combination $$\psi = t_0 w^{2k_0+1}(1) + t_1 w^{2k_1+1}(3) + \cdots + t_{m} w^{2k_m+1}(2m+1)$$ of the operators $w^{2k_j+1}(2j+1)$ for $0\leq j\leq m$, let ${\bf t}$ be the column vector with transpose $(t_0,\dots, t_{m})$. Note that $\phi \big|_{W_m} = \psi \big|_{W_m}$ precisely when $M^w  {\bf t} =  {\bf c}$, so in order to prove the claim, it suffices to show that $M^w$ is invertible. But this is clear since each $2\times 2$ minor $$\bigg[ \begin{matrix} M^w_{i,j} & M^w_{i,j+1} \cr  M^w_{i+1,j} & M^w_{i+1,j+1}\end{matrix} \bigg]$$ has positive determinant. If $w$ is odd, the same argument shows that for $k_j = j+\frac{1}{2}(w-1)$, $\phi$ can be expressed uniquely as a linear combination of $w^{2k_j+1}(2j)$ for $j=0,\dots,m$. \end{proof}

Since \eqref{actionofp} holds for all $n\geq 1$ with $\beta_j$ and $\gamma_j$ replaced with $\beta^i_j$ and $\gamma^i_j$, the statement of Lemma \ref{third} holds for any $n$. More precisely, let $W\subset \text{gr}(\cS(n))$ be the vector space with basis $\{\beta^{i}_j, \gamma^{i}_j |i=1,\dots,n,\ j\geq 0\}$, and let $W_m\subset W$ be the subspace with basis $\{\beta^{i}_j, \gamma^{i}_j|i=1,\dots, n,\ 0\leq j\leq m\}$. Let $\phi:W\ra W$ be a linear map of weight $w\geq 1$ taking \begin{equation}\label{actiongencase} \beta^{i}_j\mapsto c_j \beta^{i}_{j+w},\qquad \gamma^{i}_j\mapsto c_j \gamma^{i}_{j+w}, \qquad i=1,\dots,n,\end{equation} where $c_j$ is independent of $i$. Then $\phi\big|_{W_m}$ can be expressed uniquely as a linear combination of $w^{2k+1}(2k+1-w)\big|_{W_m}$ for $0\leq 2k+1-w \leq 2m+1$.

\begin{proof}[Proof of Theorem \ref{uniquesv}]
Since $\mathcal{I}_{-n}$ is generated by $\bra P_I \ket$ as a vertex algebra ideal, it suffices to show that $\bra P_I \ket$ is generated by $P_0$ as a module over $\mathcal{P}^+$. Let $\cI'_{-n}$ denote the ideal in $\cM^+_{-n}$ generated by $P_0$, and let $\bra P_I \ket[m]$ denote the homogeneous subspace of $\bra P_I \ket$ of weight $m$. Note that this space is trivial for $m< 2(n+1)^2$, and is spanned by $P_0$ for $m = 2(n+1)^2$. We will prove that $\bra P_I \ket[m] \subset \cI'_{-n}$ by induction on $m$.

Fix $m > 2(n+1)^2$, and assume that $\bra P_I \ket[m-1] \subset \cI'_{-n}$. Fix $I = (i_0,\dots, i_{2n+1})$ such that $P_I$ has weight $m = n+1+\sum_{k=0}^{2n+1} i_k$. Since $m > 2(n+1)^2$, there is some $k$ for which $i_k >k$. Let $k$ be the first element where this happens, and let $$I' = (i_0,\dots, i_{k-1}, i_k -1, i_{k+1},\dots, i_{2n+1}).$$ Clearly $P_{I'} \in \cI'_{-n}$, and since $i_{k-1} = k-1$, we have $i_k -1 > i_{k-1}$, so $P_{I'} \neq 0$. By Lemma \ref{third}, we can find $p \in \cP^+$ such that $$p(\beta^{i}_{r}) = c_r \beta^{i}_{r+1},\qquad p(\gamma^{i}_{r}) = c_r \gamma^{i}_{r+1}$$ where $c_r = 1$ for $r = i_k -1$ and $c_r=0$ for all other $r\leq i_{2n+1}$. The weighted derivation property \eqref{weightederivation} implies that $p(P_{I'}) = P_I$, so $P_I \in \cI'_{-n}$. \end{proof}

\section{Normal ordering and quantum corrections}

Let $p\in \text{gr}(\cM^+_{-n})\cong \mathbb{C}[Q_{a,b}]$ be homogeneous of degree $k$ in the variables $Q_{a,b}$. By a {\it normal ordering} of $p$, we mean a choice of normally ordered polynomial $P\in (\cM^+_{-n})_{(2k)}$, obtained by replacing $Q_{a,b}$ with $\Omega_{a,b}$, and by replacing ordinary products with iterated Wick products. Of course $P$ is not unique, but for any choice we have $\phi_{2k}(P) = p$. For the rest of this section, $P^{2k}$, $E^{2k}$, $F^{2k}$, etc., will denote elements of $(\cM^+_{-n})_{(2k)}$ which are homogeneous, normally ordered polynomials of degree $k$ in the elements $\Omega_{a,b}$.

Let $P_{I}^{2n+2}\in (\cM^+_{-n})_{(2n+2)}$ be some normal ordering of $p_I$, so that $\phi_{2n+2}(P_I^{2n+2}) = p_I$. Then $$\pi_{-n}(P_I^{2n+2}) \in (\cV_{-n})_{(2n)},$$ and $\phi_{2n}(\pi_{-n}(P_I^{2n+2})) \in \text{gr}(\cV_{-n})$ can be expressed uniquely as a polynomial of degree $n$ in the variables $q_{a,b}$. Choose a normal ordering of the corresponding polynomial in the variables $\Omega_{a,b}$, and call this element $-P^{2n}_{I}$. Then $P^{2n+2}_{I} + P^{2n}_{I}$ satisfies $$\phi_{2n+2}(P_I^{2n+2} + P^{2n}_{I}) = p_I,\qquad \pi_{-n}(P^{2n+2}_{I} + P^{2n}_{I})\in (\cV_{-n})_{(2n-2)}.$$ Continuing this process, we arrive at an element $\sum_{k=1}^{n+1} P^{2k}_{I}$ in the kernel of $\pi_{-n}$, such that $\phi_{2n+2}(\sum_{k=1}^{n+1} P^{2k}_{I}) = p_I$. By Lemma \ref{lemddef}, 
\begin{equation}\label{decompofd} P_{I} = \sum_{k=1}^{n+1}P^{2k}_{I}.\end{equation} The term $P^2_{I}$ lies in the space $A_m$ spanned by $\{\Omega_{a,b}|\ a+b=m\}$, for $m = n+ \sum_{a=0}^{2n+1} i_a$. By \eqref{decompofa}, for all odd $m\geq 1$ we have a projection $$\text{pr}_{m}: A_{m}\ra \bra W^{m}\ket.$$ For all $I = (i_0,\dots, i_{2n+1})$ such that $m = n+ \sum_{a=0}^{2n+1} i_a$ is odd, define the {\it remainder} \begin{equation}\label{defofrij} R_{I} = \text{pr}_m(P^2_{I}).\end{equation}

\begin{lemma} \label{uniquenessofr} Fix $P_{I}\in\cI_{-n}$ with $I = (i_0, \dots, i_{2n+1})$ and $m = n+ \sum_{a=0}^{2n+1} i_a$ odd, as above. Suppose that $P_{I} = \sum_{k=1}^{n+1} P^{2k}_{I}$ and $P_{I} = \sum_{k=1}^{n+1} \tilde{P}^{2k}_{I}$ are two different decompositions of $P_{I}$ of the form (\ref{decompofd}). Then $$P^2_{I} - \tilde{P}^2_{I} \in \partial^2 (A_{m-2}).$$ In particular, $R_{I}$ is independent of the choice of decomposition of $P_{I}$.\end{lemma}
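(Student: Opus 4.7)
Set $E^{2k} = P^{2k}_I - \tilde{P}^{2k}_I$ for $k=1,\dots,n+1$. Since both decompositions equal $P_I$, we have $\sum_{k=1}^{n+1}E^{2k}=0$, and each $E^{2k}$ is a weight-$(m+1)$ normally ordered polynomial of degree exactly $k$ in the $\Omega_{a,b}$. Expand each $E^{2k}$ in the standard basis \eqref{standmon}, and let $E^{2k}_{(r)}$ denote the part spanned by monomials of length $r$. Since $E^2 \in A_m$ is already of length $1$, comparing length-$1$ components of the relation gives $E^2 = -\sum_{k\geq 2} E^{2k}_{(1)}$.

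The crux of the argument is the following Key Claim: for every $k\geq 2$ and every degree-$k$ normally ordered polynomial $F$ in the $\Omega_{a,b}$, the length-$1$ component $F_{(1)}$ lies in $\partial(\cM^+_{-n})$. I would prove this by induction on $k$. When $k=2$, after expanding each $\Omega_{a,b}$ in the $\partial^j W^{2s+1}$-basis, every required rearrangement is a single swap, and quasi-commutativity
\[
:AB: - (-1)^{|A||B|}:BA: \;=\; \sum_{n\geq 0}\frac{(-1)^n}{(n+1)!}\,\partial^{n+1}(A\circ_n B)
\]
makes each correction manifestly $\partial$-exact. For the inductive step, quasi-associativity rewrites a length-$k$ Wick product as a nested length-$(k-1)$ Wick product plus correction terms of the form $:(\partial^{n+1}X)\cdot(Y\circ_n Z):$, which are length-$\leq 2$ Wick products with an explicit derivative on one factor. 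Using the integration-by-parts identity $:(\partial X)\cdot Y: = \partial(:XY:) - :X\cdot(\partial Y):$ to relocate derivatives, together with the inductive hypothesis and the $k=2$ case applied to the residual length-$2$ pieces, one concludes that the length-$1$ component of the full expansion is $\partial$-exact.

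Granting the claim, $E^2 \in A_m \cap \partial(\cM^+_{-n})$. To identify this intersection, observe that if $V \in A_m$ satisfies $V = \partial W$ for some $W \in \cM^+_{-n}$ of weight $m$, then decomposing $W$ by length and passing to $\text{gr}(\cM^+_{-n})\cong \mathbb{C}[\partial^k W^{2s+1}]$, the top-length part of $\partial W$ equals $\partial$ applied to the top-length part of $W$, where $\partial$ acts as a derivation on this polynomial algebra that is injective on positive-weight elements. Hence the length of $V$ being $1$ forces $W$ to be of length $\leq 1$, and so $W \in A_{m-1}$. Therefore $A_m \cap \partial(\cM^+_{-n}) = \partial(A_{m-1})$. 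Since $m$ is odd, combining \eqref{decompofa} and \eqref{decompofai} gives $\partial(A_{m-1}) = \partial^2(A_{m-2})$, so $E^2 \in \partial^2(A_{m-2})$. The independence of $R_I$ from the choice of decomposition follows immediately, because $\text{pr}_m$ annihilates $\partial^2(A_{m-2})$ by \eqref{decompofa}.

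The main obstacle will be the inductive step of the Key Claim: the quasi-associativity corrections $:(\partial^{n+1}X)Y:$ are not $\partial$-exact on the nose, so one must follow through several nested rearrangements and show that each length-$1$ piece that emerges, after shifting the explicit derivative using integration by parts, reduces to a total derivative in $\cM^+_{-n}$.
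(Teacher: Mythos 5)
Your argument is correct in outline, but it takes a genuinely different route from the paper: the paper disposes of this lemma in one line by observing that $\cM^+_{-n}$ is a vertex subalgebra of $\cM_{-n}$ whose generators are linear combinations of the $J^l$ and their derivatives, and then citing Lemma 4.7 of \cite{LI}, whereas you give a direct, self-contained argument from the reordering identities. What your approach buys is transparency: the chain $E^2 \in A_m \cap \partial(\cM^+_{-n}) = \partial(A_{m-1}) = \partial^2(A_{m-2})$ makes visible that the final upgrade from first to second derivatives is forced by the parity structure \eqref{deca} (since $m-1$ is even and only odd-indexed generators $W^{2k+1}$ exist, $A_{m-1}=\partial(A_{m-2})$ automatically), which is special to the $\hat{\cD}^+$ setting; the paper's citation hides this inside the external lemma. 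Two points in your sketch deserve care. First, in the inductive step of the Key Claim the quasi-associativity corrections are not always ``length-$\leq 2$ Wick products'' as you state: when the factors being rearranged are themselves iterated products, the corrections $:(\partial^{n+1}X)(Y\circ_n Z):$ can have any degree up to $k-1$; the induction still closes because every correction has strictly smaller degree, with the degree-$1$ corrections always of the form $\partial^{n+1}(\,\cdot\,)$, $n\geq 0$, hence $\partial$-exact. Second, your reduction implicitly needs the constant (length-$0$) components of the intermediate products to vanish — otherwise a term $c\cdot W^l$ could appear, which is \emph{not} in $\partial(\cM^+_{-n})$; this is fine here because all elements involved are weight-homogeneous of positive weight, but it should be said. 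With those details supplied, your proof is a valid and arguably more illuminating substitute for the citation.
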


\begin{proof} Since $\cM^+_{-n}$ is a vertex subalgebra of $\cM_{-n}$ and the generators of $\cM^+_{-n}$ are linear combinations of the generators of $\cM_{-n}$ and their derivatives, this follows from Lemma 4.7 of \cite{LI}. \end{proof}

\begin{lemma}\label{lemnonzero} Let $R_0$ denote the remainder of the element $P_0$. The condition $R_0 \neq 0$ is equivalent to the existence of a decoupling relation in $\cV_{-n}$ of the form \begin{equation}\label{maindecoupling} w^{2n^2+4n+1} = Q(w^1,w^3,\dots, w^{2n^2+4n-1}),\end{equation} where $Q$ is a normally ordered polynomial in $w^1, w^3, \dots, w^{2n^2+4n-1}$, and their derivatives. \end{lemma}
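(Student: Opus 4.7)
The plan is to extract the decoupling relation directly from $\pi_{-n}(P_0) = 0$ by analyzing the filtration decomposition $P_0 = \sum_{k=1}^{n+1} P_0^{2k}$ supplied by \eqref{decompofd}. The linear part $P_0^2$ lies in $A_{2n^2+4n+1}$ (since the weight of $A_m$ is $m+1$ and $P_0$ has weight $2(n+1)^2 = 2n^2+4n+2$), which by \eqref{decompofa} splits as $\partial^2(A_{2n^2+4n-1}) \oplus \langle W^{2n^2+4n+1}\rangle$. Accordingly, I write $P_0^2 = \lambda\, W^{2n^2+4n+1} + \partial^2 X$ for some $X \in A_{2n^2+4n-1}$ and $\lambda \in \mathbb{C}$, so that $R_0 = \lambda\, W^{2n^2+4n+1}$ and $R_0 \ne 0$ is equivalent to $\lambda \ne 0$.

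For the forward implication, assuming $\lambda \ne 0$, I apply $\pi_{-n}$ to $P_0 \in \cI_{-n}$ to obtain
\[
\lambda\, w^{2n^2+4n+1} \;=\; -\partial^2\pi_{-n}(X) \;-\; \sum_{k=2}^{n+1}\pi_{-n}(P_0^{2k}),
\]
and dividing by $\lambda$ produces $Q$. What must be checked is that the right-hand side is a normally ordered polynomial in $w^1, w^3, \dots, w^{2n^2+4n-1}$ and their derivatives. For the $\partial^2$ term, $X \in A_{2n^2+4n-1}$ expands in the basis $\{\partial^{2i} W^{2n^2+4n-1-2i}\mid 0 \le i \le n^2+2n-1\}$, hence involves only $W^{2j+1}$ with $2j+1 \le 2n^2+4n-1$. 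For each $P_0^{2k}$ with $k \ge 2$, the total weight is $2n^2+4n+2$ and each of the $k \ge 2$ factors $\partial^j \Omega_{a,b}$ has weight $a+b+1+j \ge 2$ (since every $\omega_{a,b}$ has weight $\ge 2$); thus every factor has weight at most $2n^2+4n$, forcing $a+b \le 2n^2+4n-1$. Re-expressing each $\omega_{a,b}$ via \eqref{lincomb} then shows only generators $w^{2l+1}$ with $2l+1 \le a+b \le 2n^2+4n-1$ appear, as required.

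For the converse, if a decoupling $w^{2n^2+4n+1} = Q$ exists, its lift $T = W^{2n^2+4n+1} - Q' \in \cI_{-n}$ has weight $2n^2+4n+2$, which is the minimum weight of $\cI_{-n}$. Any element at this minimum weight must be annihilated by all strictly positive-weight modes of $\hat{\cD}^+$, hence is a singular vector; since the weight-zero subalgebra of $\hat{\cD}^+$ is commutative, weight-preserving modes act by scalars on singular vectors, and combined with Theorem \ref{uniquesv} this shows the singular-vector space at this weight is one-dimensional and spanned by $P_0$. Therefore $T = c\, P_0$ for some nonzero $c$, and matching the coefficient of $W^{2n^2+4n+1}$ in the filtration-degree-$2$ parts yields $c\lambda = 1$, so $\lambda \ne 0$. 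The main obstacle is the weight--degree bookkeeping in the forward direction that prevents some $w^{2l+1}$ with $l \ge n^2+2n$ from sneaking into the higher-degree correction terms $\pi_{-n}(P_0^{2k})$; this is the essential content and is handled by the counting just sketched.
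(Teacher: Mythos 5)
Your proposal is correct and follows essentially the same route as the paper: the forward direction is the paper's argument (the weight--degree count showing that each $P_0^{2k}$ with $k\geq 2$ can only involve $W^1,\dots,W^{2n^2+4n-1}$ and their derivatives, then applying $\pi_{-n}$), and the converse rests, as in the paper, on the fact that $P_0$ spans the weight-$2(n+1)^2$ subspace of $\cI_{-n}$. The only cosmetic difference is that you derive this uniqueness via a somewhat roundabout singular-vector argument, whereas the paper records it directly (just before Theorem \ref{uniquesv}) as a consequence of the classical ideal $I_n$ being one-dimensional in its minimal weight.
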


\begin{proof} Let $P_0 =  \sum_{k=1}^{n+1}P^{2k}_0$ be a decomposition of $P_0$ of the form (\ref{decompofd}). If $R_0\neq 0$, we have $P^2_0 = \lambda W^{2n^2+4n+1} + \partial^2 \omega$ for some $\lambda \neq 0$ and $\omega\in A_{2n^2+4n-1}$. Since $P_0$ has weight $2(n+1)^2$ and $P^{2k}_0$ has degree $k$ in the elements $W^{2m+1}$ and their derivatives, $P^{2k}_0$ can depend only on $W^1, W^3,\dots, W^{2n^2+4n-1}$, and their derivatives, for $2 \leq k \leq n+1$. Therefore $\frac{1}{\lambda} P_0$ has the form \begin{equation} \label{specialelti} W^{2n^2+4n+1} - Q(W^1, W^3, \dots, W^{2n^2+4n-1}).\end{equation} 
Applying $\pi_{-n}:\cM^+_{-n} \ra \cV_{-n}$ yields the desired relation, since $\pi_{-n}(P_0)=0$. The converse holds since $P_0 \in \cI_{-n}$ is the unique element of weight $2(n+1)^2$, up to scalar multiples. \end{proof}

\begin{lemma} \label{lemnonzeroii}Suppose that $R_0 \neq 0$. Then there exist higher decoupling relations \begin{equation}\label{hdrelm} w^{2m+1} = Q_m(w^1, w^3, \dots, w^{2n^2+4n-1})\end{equation} for all $m > n^2+2n$, where $Q_m$ is a normally ordered polynomial in $w^1, w^3, \dots, w^{2n^2+4n-1}$, and their derivatives. \end{lemma}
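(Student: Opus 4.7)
The plan is a strong induction on $m$, with base case $m = n^2+2n$ provided directly by Lemma \ref{lemnonzero}: the hypothesis $R_0 \neq 0$ gives the relation
$$w^{2n^2+4n+1} = Q(w^1, w^3, \dots, w^{2n^2+4n-1}),$$
which is \eqref{hdrelm} at the smallest allowed value of $m$, with $Q_{n^2+2n} := Q$.

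For the inductive step, assume that for every $m'$ with $n^2+2n \leq m' \leq m$ the decoupling relation $w^{2m'+1} = Q_{m'}(w^1, w^3, \dots, w^{2n^2+4n-1})$ has been established. To produce the relation at level $m+1$, apply the mode $w^3(1) = w^3 \circ_1$ to both sides of $w^{2m+1} = Q_m$ inside $\cV_{-n}$. Passing \eqref{raisingoperator} through the projection $\pi_{-n}$ gives
$$w^3 \circ_1 w^{2m+1} = -(2m+2)\, w^{2m+3} + \sum_{k=1}^{m+1} c_k\, \partial^{2k} w^{2m+3-2k}$$
for certain constants $c_k$, so one may solve
$$w^{2m+3} = \frac{-1}{2m+2}\left( w^3 \circ_1 Q_m \;-\; \sum_{k=1}^{m+1} c_k\, \partial^{2k} w^{2m+3-2k}\right).$$

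It remains to rewrite the right-hand side as a normally ordered polynomial in $w^1, w^3, \dots, w^{2n^2+4n-1}$ and their derivatives. In the explicit sum, a term $w^{2m+3-2k}$ has index exceeding $2n^2+4n-1$ exactly when $k \leq m+1-n^2-2n$, in which case $m' := m+1-k$ lies in the range $n^2+2n \leq m' \leq m$ and one substitutes using the inductive hypothesis. For $w^3 \circ_1 Q_m$, one expands using the noncommutative Wick formula: since $Q_m$ is a normally ordered polynomial in $w^1, w^3, \dots, w^{2n^2+4n-1}$ and their derivatives, and since each OPE $w^3(z) w^{2k+1}(\zeta)$ produces only generators $w^{2j+1}$ with $2j+1 \leq 2k+3$, the only overflow generator that can appear is $w^{2n^2+4n+1}$, coming from contractions with $w^{2n^2+4n-1}$ or its derivatives. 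Every such occurrence is eliminated using the base case relation.

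The main obstacle is the bookkeeping in the noncommutative Wick expansion of $w^3 \circ_1 Q_m$: one must verify that after the successive substitutions above the output is a genuine normally ordered polynomial $Q_{m+1}$ in $w^1, w^3, \dots, w^{2n^2+4n-1}$ and their derivatives, with no residual dependence on generators of higher weight. This is essentially the same mechanism used in the proof of \eqref{raisingoperator} in Lemma \ref{weakfg}: each OPE expansion terminates in finitely many terms, so the substitution procedure halts after finitely many steps, yielding the required $Q_{m+1}$ and completing the induction.
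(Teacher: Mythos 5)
Your proposal is correct and follows essentially the same route as the paper's proof: induct on $m$, apply $w^3\circ_1$ to the previous relation, use \eqref{raisingoperator} to isolate the top generator, substitute the inductive relations for the overflow terms $\partial^{2k}w^{2m+3-2k}$, and eliminate any occurrence of $w^{2n^2+4n+1}$ in $w^3\circ_1 Q_m$ via the minimal decoupling relation. The only cosmetic difference is that you work directly in $\cV_{-n}$ while the paper phrases everything in $\cM^+_{-n}$ modulo the ideal $\cI_{-n}$, which is equivalent.
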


\begin{proof} It suffices to find elements $W^{2m+1} - Q_m(W^1, W^3, \dots, W^{2n^2+4n-1})\in \cI_{-n}$, so we assume inductively that $Q_{l}$ exists for $n^2+2n \leq l <m$. Choose a decomposition $$Q_{m-1} = \sum_{k=1}^{d} Q^{2k}_{m-1},$$ where $Q_{m-1}^{2k}$ is a normally ordered polynomial of degree $k$ in $W^1, W^3,\dots, W^{2n^2+4n-1}$, and their derivatives. In particular, $$Q^2_{m-1} = \sum_{i=0}^{n^2+2n-1} c_i \partial^{2m-2i-2} W^{2i+1},$$ for constants $c_0,\dots, c_{n^2+2n-1}$. We apply the operator $W^3\circ_1\in \mathcal{P}^+$, which raises the weight by two. By \eqref{raisingoperator}, we have $$W^3\circ_1 W^{2m-1} = -2m W^{2m+1}+ \sum_{k=1}^m \lambda_k \partial^{2k} W^{2m+1-2k},$$ for constants $\lambda_k$. By inductive assumption, for $0<k \leq m-n^2-2n$, we can use the element $$\partial^{2k} \big(W^{2m+1-2k} - Q_{m-k}(W^1, W^3, \dots, W^{2n^2+4n-1})\big) \in \cI_{-n}$$ to express $\partial^{2k} W^{2m+1-2k}$ as a normally ordered polynomial in $W^1, W^3, \dots, W^{2n^2+4n-1}$, and their derivatives, modulo $\cI_{-n}$.

Moreover, $W^3 \circ_1\big(\sum_{k=1}^{d} Q^{2k}_{m-1}\big)$ can be expressed in the form $\sum_{k=1}^{d} E^{2k}$, where each $E^{2k}$ is a normally ordered polynomial in $W^1, W^3, \dots, W^{2n^2+4n+1}$, and their derivatives. If $W^{2n^2+4n+1}$ or its derivatives appear in $E^{2k}$, we can use \eqref{specialelti} to eliminate $W^{2n^2+4n+1}$ and any of its derivatives, modulo $\cI_{-n}$. Hence $$W^3\circ_1\big(\sum_{k=1}^{d} Q^{2k}_{m-1}\big)$$ can be expressed modulo $\cI_{-n}$ in the form $\sum_{k=1}^{d'} F^{2k}$, where $d'\geq d$, and $F^{2k}$ is a normally ordered polynomial in $W^1, W^3, \dots, W^{2n^2+4n-1}$, and their derivatives. It follows that $$-\frac{1}{2m} W^3\circ_1 \big(W^{2m-1} - Q_{m-1}(W^1,W^3,\dots, W^{2n^2+4n-1})\big)$$ can be expressed as an element of $\cI_{-n}$ of the desired form. \end{proof}

\section{A closed formula for $R_I$}
\label{recursionanalysis}
We shall find a closed formula for $R_I$ for any $I = (i_0,\dots, i_{2n+1})$ such that $\text{wt}(P_I) = n+1+ \sum_{a=0}^{2n+1} i_a$ is even, and it will be clear from our formula that $R_0 \neq 0$. We write \begin{equation} \label{remcoeff} R_{I} = R_n(I) W^m,\qquad m = n+ \sum_{a=0}^{2n+1} i_a\end{equation} so that $R_n(I)$ denotes the coefficient of $W^m$ in $\text{pr}_m(P^2_{I})$. For $n=1$ and $I = (i_0, i_1, i_2, i_3)$ the following formula is easy to obtain using the fact that $\text{pr}_m(\Omega_{a,b}) =(-1)^a W^{m}$ for $m=a+b$.

\begin{equation}\label{startrecur} R_1(I) = \frac{1}{4} \bigg(\frac{(-1)^{i_0 + i_2} - (-1)^{i_1 + i_2} - (-1)^{i_0 + i_3}+ (-1)^{ i_1 + i_3}}{1 + i_0 + i_1}\end{equation}

$$+\frac{ - (-1)^{i_0 + i_1}+ (-1)^{ i_1 + i_2} + (-1)^{i_0 + i_3} - (-1)^{ i_2 + i_3}}{1 + i_0 + i_2}$$

$$+\frac{(-1)^{i_0 + i_1} - (-1)^{i_0 + i_2} - (-1)^{ i_1 + i_3} + (-1)^{ i_2 + i_3}}{1 + i_0 + i_3} $$

$$+ \frac{-(-1)^{i_0 + i_2}  + (-1)^{i_0 + i_1} - (-1)^{i_1 + i_3}  + (-1)^{i_2 + i_3}}{1 + i_1 + i_2}$$

$$+\frac{(-1)^{i_1 + i_2} + (-1)^{i_0 + i_3}  - (-1)^{i_2 + i_3} - (-1)^{i_0 + i_1}}{1 + i_1 + i_3}$$

$$ +\frac{-(-1)^{i_1 + i_2} + (-1)^{i_1 + i_3} - (-1)^{i_0 + i_3} + (-1)^{i_0 + i_2}}{1 + i_2 + i_3} \bigg).$$

First, we need a {\it recursive} formula for $R_n(I)$ in terms of the expressions $R_{n-1}(J)$, so we will assume that $R_{n-1}(J)$ has been defined for all $J$. Recall that $\cS(n)$ is a graded algebra with $\mathbb{Z}_{\geq 0}$ grading (\ref{grading}), which specifies a linear isomorphism $$\cS(n)\cong \text{Sym}\bigoplus_{k\geq 0} U_k,\qquad U_k \cong \mathbb{C}^{2n}.$$ Since $\cV_{-n}$ is a graded subalgebra of $\cS(n)$, we obtain an isomorphism of graded vector spaces \begin{equation}\label{linisomor} i_{-n}: \cV_{-n} \rightarrow (\text{Sym}\bigoplus_{k\geq 0} U_k)^{\text{Sp}(2n)}.\end{equation} Let $p\in (\text{Sym}\bigoplus_{k\geq 0} U_k )^{\text{Sp}(2n)}$ be homogeneous of degree $2d$, and let $$f = (i_{-n})^{-1}(p)\in (\cV_{-n})^{(2d)}$$ be the corresponding homogeneous element. Given $F\in (\cM^+_{-n})_{(2d)}$ satisfying $\pi_{-n}(F) = f$, we can write $F = \sum_{k=1}^{d} F^{2k}$, where $F^{2k}$ is a normally ordered polynomial of degree $k$ in the generators $\Omega_{a,b}$.

For $k\geq 0$, let $\tilde{U}_k$ be a copy of the standard representation $\mathbb{C}^{2n+2}$ of $\text{Sp}(2n+2)$, and let $$\tilde{q}_{a,b}\in (\text{Sym} \bigoplus_{k\geq 0} \tilde{U}_k)^{\text{Sp}(2n+2)}\cong \text{gr}(\cS(n+1))^{\text{Sp}(2n+2)} \cong \text{gr}(\cV_{-n-1})$$ be the generator given by (\ref{weylgenerators}). Let $\tilde{p}$ be the polynomial of degree $2d$ obtained from $p$ by replacing each $q_{a,b}$ with $\tilde{q}_{a,b}$, and let $$\tilde{f} = (i_{-n-1})^{-1} (\tilde{p}) \in (\cV_{-n-1})^{(2d)}$$ be the corresponding homogeneous element. Finally, let $\tilde{F}^{2k}\in \cM^+_{-n-1}$ be the element obtained from $F^{2k}$ by replacing each $\Omega_{a,b}$ with the corresponding generator $\tilde{\Omega}_{a,b}\in \cM^+_{-n-1}$, and let $\tilde{F} = \sum_{i=1}^d \tilde{F}^{2k}$. 

\begin{lemma} \label{corhomo} Fix $n\geq 1$, and let $P_{I}$ be an element of $\cI_{-n}$ given by Lemma \ref{lemddef}. There exists a decomposition $P_{I} = \sum_{k=1}^{n+1} P^{2k}_{I}$ of the form \eqref{decompofd} such that the corresponding element $$\tilde{P}_{I} = \sum_{k=1}^{n+1} \tilde{P}^{2k}_{I} \in \cM^+_{-n-1}$$ has the property that $\pi_{-n-1}(\tilde{P}_{I})$ lies in the subspace $(\cV_{-n-1})^{(2n+2)}$ of degree $2n+2$.
\end{lemma}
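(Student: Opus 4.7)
The plan is to construct the decomposition by exploiting the vertex algebra isomorphism $\cS(n+1) \cong \cS(n) \otimes \cS(1)$, where the two factors are generated by disjoint sets of $\beta\gamma$-pairs and hence have trivial mutual OPE. Under this identification, $\tilde\omega_{a,b} = \omega_{a,b}^{(n)} \otimes 1 + 1 \otimes \omega_{a,b}^{(1)}$. Substituting this into the polynomial expression for $\tilde P_I$ and using that $w^{(n)} \otimes 1$ commutes with $1 \otimes \omega^{(1)}$ (so cross-factor normal orderings reduce to ordinary products), I expand $\pi_{-n-1}(\tilde P_I)$ as a sum of three kinds of contributions: a pure-$\cS(n)$ subsum, a pure-$\cS(1)$ subsum, and cross terms involving both factors. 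The pure-$\cS(n)$ subsum equals $\pi_{-n}(P_I) \otimes 1 = 0$ by the hypothesis $P_I \in \cI_{-n}$. The pure-$\cS(1)$ subsum becomes $1 \otimes \pi_{-1}(P_I|_{-1})$, where $P_I|_{-1}$ is the polynomial expression interpreted in $\cM^+_{-1}$; its precise form depends on the decomposition chosen. Cross-term contributions from the degree-$k$ piece $P_I^{2k}$ have maximal degree $2k$ in $\cS(n+1)$, with possible lower-degree corrections coming from within-factor normal-ordering contractions.

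Next I choose the decomposition $P_I = \sum_{k=1}^{n+1} P_I^{2k}$ to cancel all residual contributions below degree $2n+2$. The key resource is the bracketing freedom described in Lemma \ref{uniquenessofr}: different normal-ordering choices for $P_I^{2k}$ differ by elements of strictly lower degree in the $\Omega$'s, which can be compensated by adjusting the lower $P_I^{2k'}$'s while preserving $\sum P_I^{2k} = P_I$. Under the substitution $\Omega \to \tilde\Omega$, these compensations translate into specific modifications of $\tilde P_I \in \cM^+_{-n-1}$ governed by the difference between the $\cM^+_{-n}$ and $\cM^+_{-n-1}$ OPEs. Because the $Sp(2n+2)$-invariant ring $\text{gr}(\cV_{-n-1})$ has no relations in polynomial degree below $n+2$, any classical residue of polynomial degree $\leq n$ in the $\tilde q_{a,b}$'s is a free polynomial and corresponds uniquely to a polynomial in the $q_{a,b}$'s that lifts canonically to $\cM^+_{-n}$; this correspondence is what enables the cancellation.

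The proof proceeds by descending induction on the filtration degree $d$ of the residue, starting from $d = 2n$. At each stage, I identify the polynomial of classical invariants describing the top filtration part of the residue in $(\cV_{-n-1})_{(d)}$, match it with the corresponding classical polynomial in $\cV_{-n}$, and use a bracketing adjustment at the appropriate $P_I^{2k}$ with compensating re-bracketing at a higher $P_I^{2k'}$ to kill the residue at degree $d$ without introducing new residues at degrees $> d$. The main obstacle is verifying that the cross-term contributions and the pure-$\cS(1)$ corrections at each descending step can be matched consistently, which is precisely maintained by the inductive hypothesis. At termination ($d = 0$) the residue vanishes, giving $\pi_{-n-1}(\tilde P_I) \in (\cV_{-n-1})^{(2n+2)}$.
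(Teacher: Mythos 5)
The paper gives no written proof here (it defers to Corollary 4.14 of \cite{LI}), so your attempt has to stand on its own, and as written it has a genuine gap in the cancellation mechanism. Your opening observations are sound: under $\cS(n+1)\cong\cS(n)\otimes\cS(1)$ the pure-$\cS(n)$ contribution is $\pi_{-n}(P_I)\otimes 1=0$, and since the Pfaffian relations for $Sp(2n+2)$ start in polynomial degree $n+2$, each residue of $\pi_{-n-1}(\tilde{P}_I)$ in degree $2k\leq 2n$ is represented by a \emph{unique} polynomial $g_k$ of degree $k$ in the $\tilde{q}_{a,b}$'s. The problem is the step where you kill these residues. The only modifications of a decomposition that preserve the constraint $\sum_k P^{2k}_I=P_I$ are re-bracketings of some $P^{2k}_I$ compensated at the \emph{lower} levels (your ``compensating re-bracketing at a higher $P^{2k'}_I$'' is not such a move), and the net effect of such a move on $\pi_{-n-1}(\tilde{P}_I)$ is a ``difference of differences'' between the $\cS(n)$- and $\cS(n+1)$-contractions of the specific monomials involved. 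You never show that these very constrained modifications span enough of $(\cV_{-n-1})_{(2n)}$ to cancel an arbitrary residue $g_k$, and there is no apparent direct argument that they do. Conversely, if you simply subtract a normal ordering of $g_k(\Omega)$ at level $k$ (the ``canonical lift'' you invoke), you achieve the cancellation but destroy the identity $\sum_k P^{2k}_I=P_I$, since a nonzero degree-$k$ symbol cannot be absorbed by terms of other degrees.

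The missing idea is to give up the constraint during the construction and recover it at the end. Build $T=\sum_{k=1}^{n+1}P^{2k}_I$ top-down: take $P^{2n+2}_I$ to be any normal ordering of $p_I$, and for $k\leq n$ let $-P^{2k}_I$ be a normal ordering of the unique polynomial $g_k$ representing the degree-$2k$ component of $\pi_{-n-1}\big(\sum_{j>k}\tilde{P}^{2j}_I\big)$; this forces $\pi_{-n-1}(\tilde{T})=i_{-n-1}^{-1}(\tilde{p}_I)\in(\cV_{-n-1})^{(2n+2)}$. Then identify $T$ with $P_I$ a posteriori: the degree-preserving linear projection $\cS(n+1)=\cS(n)\otimes\cS(1)\to\cS(n)$ onto the vacuum of the last $\beta\gamma$-pair sends $:\tilde{\omega}_{a_1,b_1}\cdots\tilde{\omega}_{a_k,b_k}:$ to $:\omega_{a_1,b_1}\cdots\omega_{a_k,b_k}:$, hence intertwines $\pi_{-n-1}\circ(\,\tilde{\cdot}\,)$ with $\pi_{-n}$, and it carries $i_{-n-1}^{-1}(\tilde{p}_I)$ to $i_{-n}^{-1}(p_I)=0$; therefore $\pi_{-n}(T)=0$, so $T\in\cI_{-n}$ with $\phi_{2n+2}(T)=p_I$, and the uniqueness clause of Lemma \ref{lemddef} gives $T=P_I$. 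This final identification step, which is where the real content of the lemma lies, is entirely absent from your argument.
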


\begin{proof} The argument is the same as the proof of Corollary 4.14 of \cite{LI}, and is omitted.
\end{proof}

Recall that the Pfaffian $p_I$ has an expansion $$p_I =  \sum_{r=1}^{2n+1} (-1)^{r+1} q_{i_0,i_r} p_{I_r},$$ where $I_r = (i_1,\dots, \widehat{i_r},\dots, i_{2n+1})$ is obtained from $I$ by omitting $i_0$ and $i_r$. Let $P_{I_r} \in \cM^+_{-n+1}$ be the element corresponding to $p_{I_r}$. By Lemma \ref{corhomo}, there exists a decomposition $$P_{I_r} = \sum_{i=1}^n P^{2i}_{I_r}$$ such that the corresponding element $\tilde{P}_{I_r} = \sum_{i=1}^n \tilde{P}^{2i}_{I_r} \in \cM^+_{-n}$ has the property that $\pi_{-n}(\tilde{P}_{I_r})$ lies in the subspace $(\cV_{-n})^{(2n)}$ of degree $2n$. We have \begin{equation}\label{usefuli} \sum_{r=1}^{2n+1} (-1)^{r+1} :\Omega_{i_0, i_r} \tilde{P}_{I_r}:\  = \sum_{r=1}^{2n+1} \sum_{i=1}^n (-1)^{r+1} :\Omega_{i_0,i_r} \tilde{P}^{2i}_{I_r}:.\end{equation} 

The right hand side of \eqref{usefuli} consists of normally ordered monomials of degree at least $2$ in the generators $\Omega_{a,b}$, and hence contributes nothing to $R_n(I)$. Since $\pi_{-n}(\tilde{P}_{I_r})$ is homogeneous of degree $2n$, $\pi_{-n}(:\Omega_{i_0,i_r} \tilde{P}_{I_r}:)$ consists of a piece of degree $2n+2$ and a piece of degree $2n$ coming from all double contractions of $\Omega_{i_0,i_r}$ with terms in $\tilde{P}_{I_r}$, which lower the degree by two. The component of $$\pi_{-n}\bigg(\sum_{r=1}^{2n+1} (-1)^{r+1} :\Omega_{i_0, i_r} \tilde{P}_{I_r}:\bigg)\in \cV_{-n}$$ in degree $2n+2$ must cancel since this sum corresponds to the Pfaffian $p_I$, which is a relation among the variables $q_{a,b}$. The component of $:\Omega_{i_0,i_r}\tilde{P}_{I_r}:$ in degree $2n$ is
\begin{equation} \label{crunch} S_r= \frac{1}{2} \bigg( (-1)^{i_0} \sum_{a} \frac{\tilde{P}_{I_{r,a}}}{i_0+i_{a} +1} + (-1)^{i_r+1} \sum_{a}  \frac{\tilde{P}_{I_{r,a}}}{i_r+i_{a}+1}\bigg)\end{equation}
In this notation, for $a \in \{i_0,\dots, i_{2n+1}\} \setminus \{i_0, i_r\}$, $I_{r,a}$ is obtained from $I_r = (i_1,\dots, \widehat{i_r},\dots, i_{2n+1})$ by replacing $i_a$ with $i_a+i_0+i_r+1$. It follows that \begin{equation} \label{usefulii} \pi_{-n}\bigg(\sum_{r=1}^{2n+1} (-1)^{r+1} :\Omega_{i_0, i_r} \tilde{P}_{I_r}:\bigg)= \pi_{-n}\bigg(\sum_{r=1}^{2n+1} (-1)^{r+1} S_r \bigg).\end{equation} Combining \eqref{usefuli} and \eqref{usefulii}, we can regard $$\sum_{r=1}^{2n+1} \sum_{i=1}^n (-1)^{r+1} :\Omega_{i_0,i_r} \tilde{P}^{2i}_{I_r}: - \sum_{r=1}^{2n+1} (-1)^{r+1} S_r$$ as a decomposition of $P_{I}$ of the form $P_{I} = \sum_{k=1}^{n+1} P^{2k}_{I}$ where the leading term $P^{2n+2}_{I} = \sum_{r=1}^{2n+1} (-1)^{r+1} :\Omega_{i_0, i_r} \tilde{P}^{2n}_{I_r}:$. It follows that $R_n(I)$ is the negative of the sum of the terms $R_{n-1}(J)$ corresponding to each $\tilde{P}_{J}$ appearing in $\sum_{r=1}^{2n+1} (-1)^{r+1} S_r$. We obtain the following recursive formula:

\begin{equation} \label{recursion}R_n(I) = -\frac{1}{2} \sum_{r=1}^{2n+1} (-1)^{r+1} \bigg( (-1)^{i_0} \sum_{a} \frac{R_{n-1}(I_{r,a})}{i_0+i_{a} +1} + (-1)^{i_r+1} \sum_{a}  \frac{R_{n-1}(I_{r,a})}{i_r+i_{a}+1}\bigg) .\end{equation}

To find a closed formula for $R_n(I)$, we begin with the case $n=1$. It follows from \eqref{startrecur} that $R_1(I) = 0$ unless $I = (i_0, i_1, i_2, i_3)$ contains two even and two odd elements. By permuting $i_0, i_1, i_2,i_3$ if necessary, we may assume that $i_k \equiv k\ \text{mod}\ 2$. In this case, \eqref{startrecur} simplifies as follows: 

\begin{equation} \label{closedone} R_1(I)= \frac{ (2 + i_0 + i_1 + i_2 + i_3)(i_0 - i_2) (i_1 - i_3)}{(1 + i_0 + i_1) (1 + i_1 + i_2) (1 + i_0 + i_3) (1 + i_2 + i_3)}.\end{equation}

It is clear by induction on $n$ that $R_n(I) = 0$ unless $I = (i_0,\dots, i_{2n+1})$ is \lq\lq balanced" in the sense that it has $n+1$ even elements and $n+1$ odd elements. From now on, we assume this is the case, and we change our notation slightly. We write $I = (i_0,j_0, i_1,j_1,\dots, i_n, j_n)$, where $i_0,\dots, i_n$ are even and $j_0,\dots, j_n$ are odd. For later use, we record one more obvious symmetry: for $I = (i_0,j_0, i_1, j_1,\dots, i_n, j_n)$ as above, we have \begin{equation} \label{extrasymm} R_n(I) = (-1)^{n+1} R_n(I'),\qquad I' = (j_0,i_0, j_1,i_1,\dots, j_{n}, i_{n}).\end{equation}

Fix $I= (i_0,j_0,i_1,j_1,\dots, i_{n-1}, j_{n-1})$ with each $i_r$ even and $j_r$ odd. For each even integer $x\geq 0$, set $I_x = (i_0,j_0,i_1,j_1,\dots, i_{n-1}, j_{n-1}, x,x+1)$. We regard $R_n(I_x)$ as a rational function of $x$. By applying the recursive formula \eqref{recursion} $k$ times, we can express $R_n(I_x)$ as a linear combination of terms of the form $R_{m}(K)$ where $m=n-k$ and $K = (k_0,k_1,\dots, k_{2m+1})$. Each entry of $K$ is a constant plus a linear combination of entries from $I_x$, and at most two entries of $K$ depend on $x$. Let $V_m$ denote the vector space spanned by elements $R_m(K)$ with these properties. Using \eqref{recursion}, we can express $R_m(K)$ as a linear combination of elements in $V_{m-1}$. A term $R_{m-1}(K_{r,a})$ in this decomposition will be called {\it $x$-active} if either $k_0$, $k_r$, or $k_a$ depends on $x$. Define linear maps \begin{equation} f_m: V_m \rightarrow V_{m-1},\qquad g_m: V_m \rightarrow V_{m-1}\end{equation} as follows: $f_m(R_{m}(K))$ is the sum of the terms which are not $x$-active, and $g_m(R_m(K))$ is the sum of the terms which are $x$-active. Finally, define the {\it constant term map} \begin{equation} \label{conmap} h_m: V_m \rightarrow \mathbb{Q},\qquad h_m (R_m(K)) = \lim_{x\rightarrow \infty} R_m(K). \end{equation}

\begin{lemma} \label{indy} For all $n\geq 2$ and $I = (i_0,j_0, i_1,j_1,\dots, i_{n-1},j_{n-1})$ with $i_0,\dots, i_{n-1}$ even and $j_0,\dots, j_{n-1}$ odd, we have \begin{equation} \label{constantid}h_n(R_n(I_x))= \frac{n}{n + \sum_{k=0}^{n-1} i_k + j_k} R_{n-1}(I).\end{equation} \end{lemma}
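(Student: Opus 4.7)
The plan is to argue by induction on $n$, using the recursion \eqref{recursion} to expand $R_n(I_x)$ and taking the limit $x \to \infty$ termwise. The base case $n = 2$ is handled by direct computation: substitute the closed form \eqref{closedone} for $R_1(\cdot)$ into \eqref{recursion} applied at level $n = 2$, so that $R_2(I_x)$ becomes an explicit rational function of $x$ whose leading-order coefficient can be read off and simplified to $\frac{2}{2 + i_0 + j_0 + i_1 + j_1} R_1(I)$.

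For the inductive step, write $R_n(I_x) = f_n(R_n(I_x)) + g_n(R_n(I_x))$. The non-$x$-active summands collected in $f_n$ are precisely those with $r \leq 2n-1$ and $a \leq 2n-1$; in each case the tuple $(I_x)_{r,a}$ has the form $(J_{r,a}, x, x+1)$, so the inductive hypothesis applied to $R_{n-1}((J_{r,a}, x, x+1))$ produces $\frac{n-1}{n-1 + \Sigma_{r,a}} R_{n-2}(J_{r,a})$, where $\Sigma_{r,a}$ is the sum of the entries of $J_{r,a}$. Summing these contributions, $h_n(f_n(R_n(I_x)))$ becomes a signed sum of $R_{n-2}$-values that mirrors the expansion of $R_{n-1}(I)$ under \eqref{recursion}. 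For the complementary $x$-active piece $g_n(R_n(I_x))$, summands whose denominators grow with $x$ drop out in the limit, while the remaining ones involve $R_{n-1}$ evaluated on tuples with one entry scaling linearly in $x$. Their limits are extracted from another application of \eqref{recursion} at level $n-1$ by reading off the leading $x$-behavior of the numerator and denominator of each term; the degree counts $1 + 2(n-1)$ in numerator and $2(n-1)$ in the relevant denominator factors, together with one additional linear-in-$x$ denominator, ensure that these limits exist and are rational.

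Combining the two contributions expresses $h_n(R_n(I_x))$ as a signed linear combination of $R_{n-2}$-values. The goal is then to identify this combination with $\frac{n}{n + \sum(i_k + j_k)}$ times the expansion of $R_{n-1}(I)$ given by \eqref{recursion}. The main obstacle will be this final combinatorial identity: one must match the parity signs $(-1)^{i_0}$ and $(-1)^{i_r + 1}$ across the two recursive steps, track the position shifts through the double removal, and collapse sums of the form $\sum_a \frac{1}{i_0 + (I_x)_a + 1}$ by partial fractions so that the global factor $\frac{1}{n + \sum(i_k + j_k)}$ emerges. Once the coefficient of each $R_{n-2}(J_{r,a})$ obtained from $h_n(R_n(I_x))$ is shown to equal $\frac{n}{n + \sum(i_k + j_k)}$ times the coefficient of the same $R_{n-2}(J_{r,a})$ in the recursion for $R_{n-1}(I)$, the lemma follows. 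A useful reduction in this step is to exploit the extra symmetry \eqref{extrasymm} and the observation that only balanced tuples contribute nonzero $R_{n-1}$-values, which cuts the number of cases that need to be matched by hand.
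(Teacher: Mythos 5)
Your decomposition of $R_n(I_x)$ into the non-$x$-active part $f_n(R_n(I_x))$ and the $x$-active part $g_n(R_n(I_x))$ is the right starting point, and your treatment of the $f_n$ part matches the paper's: every term $R_{n-1}(K)$ occurring there has $K=(k_0,\dots,k_{2n-3},x,x+1)$ with $n-1+\sum_t k_t = n+\sum_k(i_k+j_k)$ fixed by weight conservation, so the induction hypothesis \eqref{constantid} pulls out a common factor and yields $h_{n-1}(f_n(R_n(I_x)))=\tfrac{n-1}{n+\sum_k(i_k+j_k)}\,R_{n-1}(I)$. The gap is in the $x$-active part. You propose to evaluate $\lim_{x\to\infty}$ of each $x$-active summand by reading off the leading $x$-behavior after one more application of \eqref{recursion}, and then to verify a ``final combinatorial identity'' matching parity signs, index shifts, and partial fractions; but that identity \emph{is} the content of the lemma for the $g_n$-piece, and you supply no mechanism for proving it. Note also that at this stage no closed rational-function formula for $R_{n-1}$ is available (Lemma \ref{indy} is an input to the proof of Theorem \ref{closedform}, not a consequence of it), so the ``numerator and denominator'' of an $x$-active term $R_{n-1}(K)$ are not objects you can yet read degrees off of.

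The paper closes this gap by strengthening the induction: one proves simultaneously the auxiliary identity $h_{n-1}(g_n(R_n(I_x)))=\tfrac{1}{n+\sum_k(i_k+j_k)}\,R_{n-1}(I)$, i.e.\ \eqref{constantidii}, checked by hand for $n=2$. The inductive step for this auxiliary identity rests on two structural facts: $h_{n-2}(g_{n-1}(f_n(R_n(I_x))))=h_{n-2}(f_{n-1}(g_n(R_n(I_x))))$, and $g_{n-1}(g_n(R_n(I_x)))=0$ because the doubly $x$-active terms cancel in pairs. Together these give $h_{n-1}(g_n(R_n(I_x)))=h_{n-2}(g_{n-1}(f_n(R_n(I_x))))$, to which the auxiliary hypothesis at level $n-1$ applies term by term inside $f_n(R_n(I_x))$. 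Adding the two contributions, $\tfrac{n-1}{n+\Sigma}+\tfrac{1}{n+\Sigma}=\tfrac{n}{n+\Sigma}$, finishes the proof. Without some such device, your plan stalls exactly at the step you yourself flag as the main obstacle, so as written the argument is incomplete.
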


\begin{proof} For $n=2$ this is an easy calculation, so we may proceed by induction on $n$. It will be convenient to prove the following auxiliary formula at the same time:
\begin{equation} \label{constantidii} h_{n-1}(g_n(R_{n}(I_x))) = \frac{1}{n + \sum_{k=0}^{n-1} i_k + j_k} R_{n-1}(I).\end{equation} For $n=2$ this can be checked by direct calculation, so we assume both \eqref{constantid} and \eqref{constantidii} for $n-1$. Each term appearing in $f_n(R_{n}(I_x))$ is of the form $$R_{n-1}(K),\qquad K = (k_0,k_1,\dots, k_{2n-3}, x,x+1),\qquad n-1 + \sum_{t=0}^{2n-3} k_t = n + \sum_{k=0}^{n-1} i_k + j_k.$$ By our inductive hypothesis that \eqref{constantid} holds for $n-1$, we have \begin{equation} \label{indyi}h_{n-1}(f_n(R_n(I_x))) = \frac{n-1}{n + \sum_{k=0}^{n-1} i_k + j_k} R_{n-1}(I).\end{equation}
Next, it is easy to check that $$h_{n-2}(g_{n-1}(f_n(R_{n}(I_x)))) = h_{n-2}(f_{n-1}(g_n(R_n(I_x)))).$$
Also, we have $$g_{n-1}(g_n(R_n(I_x))) = 0,$$ since all terms in this expression cancel pairwise. Therefore $$h_{n-1}(g_n(R_n(I_x))) = h_{n-2} (f_{n-1} (g_n(R_n(I_x)))) + h_{n-2} (g_{n-1} (g_n(R_n(I_x)))) $$ $$ = h_{n-2} (f_{n-1} (g_n(R_n(I_x)))) = h_{n-2} (g_{n-1} (f_n(R_n(I_x)))). $$ Moreover, by applying the induction hypothesis that \eqref{constantidii} holds for $n-1$, it follows that \begin{equation} \label{indyii}h_{n-1}(g_n(R_n(I_x))) = h_{n-2} (g_{n-1} (f_n(R_n(I_x)))) = \frac{1}{n + \sum_{k=0}^{n-1} i_k + j_k} R_{n-1}(I).\end{equation} Since $h_n(R_n(I_x)) = h_{n-1}(f_n(R_n(I_x)))+h_{n-1}(g_n(R_n(I_x)))$, the claim follows from \eqref{indyi} and \eqref{indyii}. \end{proof}

\begin{thm} \label{closedform} Suppose that $I = (i_0,j_0, i_1,j_1,\dots, i_n, j_n)$ is a list of nonnegative integers such that  $i_0,\dots, i_n$ are even and $j_0,\dots, j_n$ are odd, as above. Then the following closed formula holds:
\begin{equation} \label{closedeq} R_n(I) = \frac{ n! \bigg(n+1 + \sum_{k=0}^{n} i_k+j_k\bigg)  \bigg(\prod_{0\leq k<l\leq n} (i_k - i_l)(j_k - j_l) \bigg)}{\prod_{0\leq k \leq n, \  0\leq l \leq n} (1+i_k + j_l)}.\end{equation} \end{thm}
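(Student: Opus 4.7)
The natural approach is induction on $n$. The base case $n=1$ is already in hand: formula \eqref{startrecur} simplifies, under the balancing condition $i_k \equiv k \pmod 2$, to \eqref{closedone}, which agrees with \eqref{closedeq} at $n=1$.

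For the inductive step, let $T_n(I)$ denote the right-hand side of \eqref{closedeq}. Assuming $R_{n-1}(J)=T_{n-1}(J)$ for every admissible $J$, I would first verify that $T_n$ satisfies the same limit relation as $R_n$ does in Lemma \ref{indy}. A direct computation with $I_x=(i_0,j_0,\dots,i_{n-1},j_{n-1},x,x+1)$ shows
$$\lim_{x\to\infty}T_n(I_x)=\frac{n}{n+\sum_{k=0}^{n-1}(i_k+j_k)}\,T_{n-1}(I),$$
since the factor $(n+1+\sum(i_k+j_k))$ contributes leading term $2x$, each of the two $x$-dependent Vandermonde blocks in the numerator contributes $(-x)^n$, and the three $x$-dependent denominator blocks contribute $x^n\cdot x^n\cdot(2x+2)$; the $x$-dependence cancels cleanly, leaving the stated constant. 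Combined with Lemma \ref{indy} and the induction hypothesis, this gives $\lim_{x\to\infty}R_n(I_x)=\lim_{x\to\infty}T_n(I_x)$.

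The remaining task is to upgrade this limit agreement to the full identity $R_n=T_n$. The cleanest route is to verify that $T_n$ satisfies the recursion \eqref{recursion} with $R_{n-1}$ replaced by $T_{n-1}$; since \eqref{recursion} determines $R_n$ from $R_{n-1}=T_{n-1}$, this would force $R_n=T_n$. To prove this rational-function identity I would clear the common denominator $\prod_{0\le k,l\le n}(1+i_k+j_l)$ on the left and the denominators $(1+i_0+i_a)$, $(1+i_r+i_a)$ on the right, then argue by comparing residues: as a function of $i_n$, both sides have only simple poles at $i_n=-1-j_l$, and matching the residue at each such pole should reduce the identity to its $n-1$ version, closing the induction.

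The main obstacle is precisely this combinatorial collapse: the right-hand side of the recursion is a double sum of $O(n^2)$ terms, each involving $T_{n-1}$ evaluated on a modified list, and it is not obvious that these telescope into the single product form of $T_n$. A useful organizing principle is that both sides are antisymmetric under permutations of the even entries $\{i_0,\dots,i_n\}$ and independently of the odd entries $\{j_0,\dots,j_n\}$ (using the symmetry \eqref{extrasymm} and the antisymmetry of Pfaffians), and both vanish whenever two even or two odd entries coincide. Hence after clearing denominators the numerators are divisible by $\prod_{0\le k<l\le n}(i_k-i_l)(j_k-j_l)$, and the remaining polynomial factor is pinned down to $n!\,(n+1+\sum(i_k+j_k))$ by comparing total degrees and by matching the leading-order behavior computed above via Lemma \ref{indy}. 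The nonvanishing of $R_0=R_n(0,1,\ldots,2n+1)$, which was the ultimate goal of this whole analysis, is then immediate since every Vandermonde factor is nonzero.
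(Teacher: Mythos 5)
Your closing paragraph lands very close to the paper's actual argument, and your computation that the proposed closed form $T_n$ is compatible with the limit identity of Lemma \ref{indy} is correct: this is precisely how the paper fixes the coefficient $a_n=n!$ of $\sum_k(i_k+j_k)$ in the extra linear factor. But two concrete steps are missing. First, the degree count only pins down the numerator once you know that the denominator of $R_n(I)$ is \emph{exactly} $\prod_{0\leq k,l\leq n}(1+i_k+j_l)$, and this is not automatic: iterating \eqref{recursion} a priori produces denominator factors of six shapes ($1+i_k+j_l$, $1+i_k+i_l$, $1+j_k+j_l$, and three shapes of the form $2+i_0+\cdots$). The paper must rule out all but the first type --- the $2+i_0+\cdots$ factors by the permutation antisymmetry in the $i$'s and in the $j$'s, the $1+i_k+i_l$ factors because the corresponding terms $R_{n-1}(I_{r,a})$ vanish (the list becomes unbalanced), and $1+j_k+j_l$ by the symmetry \eqref{extrasymm} --- and then use symmetry again to force \emph{all} $(n+1)^2$ factors $1+i_k+j_l$ to appear. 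Only then is the denominator degree $(n+1)^2$, leaving room for exactly one linear factor beyond the Vandermonde product. Second, the limit $x\to\infty$ along the family $I_x$ sees only the leading coefficient of the linear factor $a_n\sum_k(i_k+j_k)+c_n$ and says nothing about $c_n$; the paper determines $c_n=(n+1)!$ by a separate divisibility argument, namely that by induction the numerator of every term $R_{n-1}(I_{r,a})$ in \eqref{recursion} is divisible by $n+1+\sum_k(i_k+j_k)$ (the recursion preserves this weight), hence so is the numerator of $R_n(I)$.

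Your alternative plan --- verifying that $T_n$ satisfies \eqref{recursion} directly by clearing denominators and matching residues --- would constitute a proof if carried out, but you yourself flag the combinatorial collapse of the $O(n^2)$ terms as the unresolved obstacle, so as written it cannot carry the argument. The paper sidesteps that collapse entirely by the symmetry-plus-degree analysis above, which is why the two missing steps (the denominator determination and the constant $c_n$) are the essential content of the proof rather than routine bookkeeping.
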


\begin{proof} As we shall see, this formula is more or less forced upon us by the symmetries of the Pfaffians, which are inherited by the numbers $R_n(I)$. We assume that \eqref{closedeq} holds for $n-1$ and all $K = (k_0,l_0,k_1,l_1,\dots, k_{n-1}, l_{n-1})$ with $k_i$ even and $l_i$ odd, and we proceed by induction on $n$. In particular, each term of the form $R_{n-1}(K)$ is a rational function in the entries of $K$, where the denominator has degree $n^2$ and the numerator has degree $n^2-n+1$. Hence the total degree of $R_{n-1}(K)$ is $-n+1$.

We may expand $R_n(I)$ as a rational function of $i_0,j_0, i_1,j_1, \dots, i_n,j_n$, using \eqref{recursion}. Each term in \eqref{recursion} has degree $-n$, so $R_n(I)$ has degree at most $-n$. The denominator of each such term is clearly a product of factors of the form
$$1+i_k+j_l,\ \ \ \ 1+i_k + i_l,\ \ \ \ 1+j_k + j_l,\ \ \ \ 2+i_0 + i_k + i_l,\ \ \ \ 2+ i_0 + j_k + i_l,\ \ \ \ 2+i_0+j_k + j_l.$$
As a rational function of $i_0,j_0, i_1,j_1, \dots, i_n,j_n$, the denominator of $R_n(I)$ is therefore the product of a subset of these terms. Moreover, $R_n(I)$ has the following symmetry; for every interchange of $i_k$ and $i_l$, we pick up a sign, and for every interchange of $j_k$ and $j_l$ we pick up a sign. These permutations must have the effect of permuting the factors in the denominator up to a sign, and permuting the factors of the numerator up to a sign. 

From this symmetry, it is clear that none of the expressions $$2+i_0 + i_k + i_l,\qquad 2+ i_0 + j_k + i_l,\qquad 2+i_0+j_k + j_l$$ can appear. Each term $R_{n-1}(I_{r,a})$ appearing in \eqref{recursion} with $i_a$ and $i_r$ both even must vanish, since $I_{r,a}$ will then contain $n-2$ even elements and $n+2$ odd elements. Hence none of the factors $1+i_k+i_l$ can appear in the denominator of $R_n(I)$. By \eqref{extrasymm}, the factors $1+j_k+j_l$ also cannot appear, so the denominator of $R_n(I)$ can contain only a subset of expressions of the form $1+i_k + j_l$. By symmetry, it must contain {\it all} such expressions, so the denominator is precisely $$\prod_{0\leq k \leq n,\ 0\leq l \leq n} (1+ i_k + j_l),$$ and therefore has leading degree $(n+1)^2$. Since $R_n(I)$ picks up a sign under permutation of $i_k$ and $i_l$, and under permutation of $j_k$ and $j_l$, the numerator must be divisible by $$\prod_{0\leq k<l\leq n} (i_k - i_l)( j_k - j_l),$$ which is homogeneous of degree $n^2+n$. Since the denominator has degree $(n+1)^2$, the total degree of $R_n(I)$ is at least $-n-1$. 

We have already seen that the total degree of $R_n(I)$ is at most $-n$, so there is room for at most one more linear factor in the numerator. By symmetry, this factor must be invariant under all permutations of $i_0,\dots, i_n$ and all permutations of $j_0,\dots, j_n$, so it has the form $$a_n \bigg(\sum_{k=0}^n i_k\bigg) + b_n \bigg(\sum_{k=0}^n j_k\bigg)+ c_n,$$ where $a_n, b_n, c_n$ are constants which depend on $n$ but not on $I$. The additional symmetry \eqref{extrasymm} shows that $a_n = b_n$, so this can be rewritten in the form 
$$a_n \bigg(\sum_{k=0}^n i_k+j_k\bigg) + c_n.$$
In order to complete the proof of Theorem \ref{closedform}, it suffices to show that 
\begin{equation} \label{aandb} a_n = n!,\qquad c_n = (n+1)!.\end{equation} 
By our inductive assumption, $a_{n-1} = (n-1)!$. Fix $$K = (k_0,l_0, k_1,l_1,\dots, k_{n-1}, l_{n-1})$$ with each $k_r$ even and $l_r$ odd. As in Lemma \ref{indy}, for each even integer $x\geq 0$, set $$K_x = (k_0,l_0,k_1,l_1,\dots, k_{n-1}, l_{n-1}, x,x+1).$$ The highest power of $x$ appearing in the numerator of $R_n(K_x)$ is $x^{2n+1}$, and the coefficient of $x^{2n+1}$ in the numerator is $$2 a_n \prod_{0\leq r<s<n} (k_r - k_s) (l_r-l_s).$$ Similarly, the highest power of $x$ appearing in the denominator of $R_{n}(K_x)$ is also $x^{2n+1}$, and the coefficient of $x^{2n+1}$ is $$2 \prod_{0\leq r <n,\ 0 \leq s <n} (1+ k_r+ l_s).$$ Therefore $$h_n(R_n (K_x)) = \lim_{x\rightarrow \infty} R_n (K_x) = \frac{a_n \prod_{0\leq r<s <n} (k_r-k_s) (l_r-l_s)}{\prod_{0\leq r <n,\ 0\leq s <n} (1+ k_r+ l_s)}.$$ By our inductive assumption, we have $$R_{n-1}(K) =\frac{(n-1)! (n+ \sum_{r=0}^{n-1} k_r + l_r) \prod_{0\leq r<s <n} (k_r-k_s) (l_r-l_s)}{\prod_{0\leq r <n,\ 0\leq s <n} (1+ k_r+ l_s)}.$$ Therefore by \eqref{constantid}, we have $$h_n(R_n (K_x)) =\bigg( \frac{n}{n+ \sum_{r=0}^{n-1} k_r+l_r} \bigg)\frac{(n-1)! (n+ \sum_{r=0}^{n-1} k_r+l_r) \prod_{0\leq r<s <n} (k_r-k_s) (l_r-l_s)}{\prod_{0\leq r <n,\ 0\leq s <n} (1+ k_r+ l_s)}.$$ This proves that $a_n = n!$. 

Finally, we need to show that $c_n = (n+1)!$. Let $I = (i_0, j_0, i_1,j_1,\dots, i_n, j_n)$ as above. Since $a_n = n!$, it suffices to show that the numerator of $R_n(I)$ is divisible by $n+1+\sum_{k=0}^n i_k + j_k$. But this is clear from \eqref{recursion}, since by inductive assumption, the numerator of each term of the form $R_{n-1}(I_{r,a})$ is divisible by $n+1+\sum_{k=0}^n i_k + j_k$. \end{proof}

\begin{thm} \label{descriptionofvn} For all $n\geq 1$, $\cV_{-n}$ has a minimal strong generating set $\{w^1, w^3,\dots, w^{2n^2+4n-1}\}$, and is therefore of type $\cW(2,4,\dots, 2n^2+4n)$.
\end{thm}

\begin{proof} Specializing \eqref{closedeq} to the case $I = (0,1,\dots,2n+1)$ yields $$R_n(I) = \frac{n! (n+1)^2 \prod_{0\leq k<l \leq n} (k-l)^2}{2^n \prod_{0\leq k \leq n,\ 0\leq l \leq n} (1+k + l)}.$$ In particular, $R_0 = R_n(I) W^{2n^2+4n+1} \neq 0$, so the claim follows from Lemma \ref{lemnonzeroii}. \end{proof}

\section{The case $c=\frac{n}{2}$ for $n\geq 1$}

For each integer $n\geq 1$, $\cV_{n/2}$ admits a free field realization as the $\text{O}(n)$-invariant subalgebra of the free fermion algebra $\cF(n)$ of rank $n$, which has odd generators $\phi^1,\dots, \phi^n$ satisfying
$$\phi^i(z) \phi^j (w) \sim \delta_{i,j} (z-w)^{-1}.$$ We give $\cF(n)$ the conformal structure \begin{equation}\label{bconeparameter} 
L = - \frac{1}{2} \sum_{i=1}^n  :\phi^i \partial \phi^i:
\end{equation} of central charge $\frac{n}{2}$, under which $\phi^i$ is primary of weight $\frac{1}{2}$. The full automorphism group of $\cF(n)$ is $\text{O}(n)$, and $\{\phi^1,\dots, \phi^n\}$ spans a copy of the standard $\text{O}(n)$-module $\mathbb{C}^n$. There is a basis of $\cF(n)$ consisting of normally ordered monomials
\begin{equation}\label{basisofe} :\partial^{I_1} \phi^{1}\cdots \partial^{I_n} \phi^{n}: .\end{equation} In this notation, $I_k = (i^k_1,\dots, i^k_{r_k})$ are lists of integers satisfying $0\leq i^k_1 < \cdots < i^k_{r_k}$, and 
$$\partial^{I_k} \phi^{k} = \ :\partial^{i^k_1} \phi^{k} \cdots \partial^{i^k_{r_k}} \phi^{k}:.$$ We have a $\mathbb{Z}_{\geq 0}$-grading \begin{equation}\label{gradingodd} \cF(n) = \bigoplus_{d\geq 0} \cF(n)^{(d)},\end{equation} where $\cF(n)^{(d)}$ is spanned by monomials of the form \eqref{basisofe} of total degree $ d = \sum_{k=1}^n r_k$. The filtration $\cF(n)_{(d)} = \bigoplus_{i=0}^d \cF(n)^{(i)}$ satisfies \eqref{goodi} and we have an isomorphism of $\text{O}(n)$-modules
$\cF(n) \cong  \text{gr}(\cF(n))$,
and an isomorphism of supercommutative rings
$ \text{gr}(\cF(n))\cong \bigwedge \bigoplus_{k\geq 0} U_k$. Here $U_k$ is the copy of the standard $\text{O}(n)$-module $\mathbb{C}^{n}$ spanned by $\{\phi^{1}_k, \dots \phi^n_k\}$, where $\phi^{i}_k$ is the image of $\partial^k \phi^{i}(z)$ in $\text{gr}(\cF(n))$.

The following result appears as Proposition 14.1 of \cite{KWY} for an even integer $n=2l$, and as Proposition 14.2 for $n=2l+1$. Note that $\cF(2l)$ and $\cF(2l+1)$ are isomorphic to the vertex algebras $\cF^{\otimes l}$ and $\cF^{\otimes l+\frac{1}{2}}$ in \cite{KWY}.

\begin{thm} There is an isomorphism $\cV_{n/2}\rightarrow \cF(n)^{\text{O}(n)}$ given by
\begin{equation} \label{bcrealization} w^{2m+1} \mapsto - \frac{1}{2} \sum_{i=1}^n :\phi^i \partial^{2m+1} \phi^i: ,\qquad m\geq 0.\end{equation} 
\end{thm}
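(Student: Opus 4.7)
The plan is to construct an explicit vertex algebra homomorphism $\varphi: \cM^+_{n/2} \to \cF(n)$ sending $W^{2m+1} \mapsto \hat{W}^{2m+1} := -\frac{1}{2}\sum_{i=1}^n :\phi^i \partial^{2m+1}\phi^i:$, show that $\varphi$ descends to the simple quotient $\cV_{n/2}$, and then prove it surjects onto $\cF(n)^{O(n)}$. Since $\cM^+_{n/2}$ is freely generated by the $W^{2m+1}$'s subject only to the defining relations of $\hat{\cD}^+$ at central charge $n/2$, it suffices to verify that the operators $\hat{W}^{2m+1}$ in $\cF(n)$ realize precisely these relations. I would compute the OPE $\hat{W}^{2m+1}(z)\hat{W}^{2l+1}(w)$ directly via Wick's theorem for the free fermion system, matching each singular term with the circle product prescribed by the commutator of $W^{2m+1}_j$ with $W^{2l+1}_k$ together with the cocycle $\Psi$ from \eqref{cocycle}. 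In particular $\hat{W}^1 = -\frac{1}{2}\sum_i :\phi^i \partial\phi^i:$ is the conformal vector \eqref{bconeparameter} of central charge $n/2$, which pins down the overall normalization.

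Once $\varphi$ is in hand, its image lies in $\cF(n)^{O(n)}$, since each $\hat{W}^{2m+1}$ is built from the $O(n)$-invariant bilinear pairing $\sum_i$ applied to two copies of the standard module $\C^n$ spanned by the $\phi^i$'s. To descend to $\cV_{n/2}$, I would argue as follows. The vertex algebra $\cF(n)$ is simple, and $O(n)$ acts by vertex algebra automorphisms with finite-multiplicity isotypic decomposition; by a standard Dong-Li-Mason style argument for reductive group actions on simple vertex algebras, $\cF(n)^{O(n)}$ is also simple. Hence any quotient of $\cM^+_{n/2}$ embedding in $\cF(n)^{O(n)}$ must itself be simple, forcing $\ker(\varphi) = \cI_{n/2}$. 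This yields an injection $\bar\varphi: \cV_{n/2} \hookrightarrow \cF(n)^{O(n)}$ with $\bar\varphi(w^{2m+1}) = \hat{W}^{2m+1}$, as required.

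Finally, I would establish surjectivity by combining Weyl's first fundamental theorem of invariant theory with the reconstruction principle (Lemma \ref{reconlem}). The map $\bar\varphi$ is a morphism in $\cR$, so it suffices to show that the induced map on associated graded rings is surjective. Under the identification $\text{gr}(\cF(n))^{O(n)} \cong (\bigwedge \bigoplus_{k\geq 0} U_k)^{O(n)}$, Weyl's theorem identifies a set of $\partial$-ring generators as the quadratic invariants $q_{a,b} = \sum_i \phi^i_a \phi^i_b$ for $0\leq a < b$, antisymmetric in $a,b$ because the $\phi^i$'s are odd. By Lemma \ref{reconlem}, the vertex operator lifts $\omega_{a,b} = \sum_i :\partial^a \phi^i \partial^b\phi^i:$ strongly generate $\cF(n)^{O(n)}$. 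A change of variables entirely analogous to \eqref{lincomb}, together with the fact that $\omega_{a,b} + \omega_{b,a}$ is a total derivative, writes each $\omega_{a,b}$ as a linear combination of the $\partial^k \hat{W}^{2m+1}$, so every strong generator of $\cF(n)^{O(n)}$ lies in the image of $\bar\varphi$. The main obstacle is step one: one must track the Wick-contraction coefficients among the infinite family $\{\hat{W}^{2m+1}\}$ and confirm they match the intricate normalization dictated by $\Psi$ after restriction to $\hat{\cD}^+$ via the anti-involution $\sigma_{+,-1}$. Once this is in place, the descent and surjectivity steps are formal consequences of simplicity and classical invariant theory.
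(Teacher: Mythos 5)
The paper does not actually prove this statement; it is quoted from Propositions 14.1 and 14.2 of \cite{KWY}, so there is no internal proof to match your argument against. Judged on its own merits, your outline is the standard route to such free-field realizations and its ingredients are all correct: the OPE computation showing the modes of $\hat{W}^{2m+1}$ represent $\hat{\cD}^+$ at central charge $n/2$ (the genuine computational crux, which you correctly identify but do not carry out), the $O(n)$-invariance of the image, simplicity of $\cF(n)^{O(n)}$ via the Dong--Li--Mason theory for reductive automorphism groups of simple vertex algebras, and surjectivity via Weyl's first fundamental theorem for $O(n)$ on the exterior algebra together with Lemma \ref{reconlem} and the linear change of variables relating $\{\omega_{a,b}\}$ to $\{\partial^k w^{2m+1}\}$.

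The one genuine wrinkle is logical ordering in your second paragraph. The inference \lq\lq $\cF(n)^{O(n)}$ is simple, hence any quotient of $\cM^+_{n/2}$ embedding in it must be simple'' is not valid as stated: a proper subalgebra of a simple vertex algebra need not be simple (the image of $\varphi$ is a priori only a subalgebra of $\cF(n)^{O(n)}$, namely a quotient of $\cM^+_{n/2}$ that could still carry the image of a nontrivial singular vector). What actually forces $\ker(\varphi)=\cI_{n/2}$ is that the image is \emph{all} of $\cF(n)^{O(n)}$: then the kernel is a maximal proper graded ideal of $\cM^+_{n/2}$ and hence coincides with $\cI_{n/2}$, which is the unique such ideal. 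Your surjectivity argument in the third paragraph does not use injectivity anywhere, so the fix is purely a matter of reordering: prove surjectivity of $\varphi$ onto $\cF(n)^{O(n)}$ first, then invoke simplicity of the target to identify the kernel. With that rearrangement the proof is sound.
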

Since $\text{O}(n)$ preserves the grading on $\cF(n)$, $\cV_{n/2} \subset \cF(n)$ is a graded subalgebra. Write \begin{equation}\label{gradingvodd} \cV_{n/2} = \bigoplus_{d\geq 0} ( \cV_{n/2})^{(d)},\qquad (\cV_{n/2})^{(d)} = \cV_{n/2}\cap\cF(n)^{(d)},\end{equation} and define the corresponding filtration by $(\cV_{n/2} )_{(d)} = \bigoplus_{i=0}^{d} (\cV_{n/2} )^{(i)}$. Then \eqref{bcrealization} preserves conformal structures and is a morphism in the category $\cR$. We clearly have \begin{equation} \label{grisosodd} \text{gr}(\cV_{n/2}) \cong \text{gr}(\cF(n)^{\text{O}(n)}) \cong \text{gr}(\cF(n))^{\text{O}(n)} \cong (\bigwedge \bigoplus_{k\geq 0} U_k)^{\text{O}(n)}.\end{equation} We recall the following analogue of Weyl's first and second fundamental theorems of invariant theory for the orthogonal group (Theorems 2.9A and 2.17A of \cite{We}), where the symmetric algebra is replaced by the exterior algebra. In fact, this theorem is a special case of Sergeev's first and second fundamental theorems of invariant theory for $\text{Osp}(m,2n)$ (Theorem 1.3 of \cite{SI} and Theorem 4.5 of \cite{SII}).

\begin{thm} \label{weylfftorthodd} For $k\geq 0$, let $U_k$ be the copy of the standard $\text{O}(n)$-module $\mathbb{C}^{n}$ with orthonormal basis $\{x_{i,k} |\ i=1,\dots,n\}$. Then $(\bigwedge \bigoplus_{k\geq 0} U_k )^{\text{O}(n)}$ is generated by the quadratics \begin{equation}\label{weylgeneratorsodd} q_{a,b} = \frac{1}{2}\sum_{i=1}^n  x_{i,a} x_{i,b} ,\qquad a,b \geq 0. \end{equation} Note that $q_{a,b} = -q_{b,a}$. Let $\{Q_{a,b}|\ a,b\geq 0\}$ be commuting indeterminates satisfying $Q_{a,b} = -Q_{b,a}$. The kernel $I_n$ of the homomorphism \begin{equation}\label{weylquotodd} \mathbb{C}[Q_{a,b}]\ra (\bigwedge \bigoplus_{k\geq 0} U_k)^{\text{O}(n)},\qquad Q_{a,b}\mapsto q_{a,b},\end{equation} is generated by polynomials $d_{I,J}$ degree $n+1$, which are indexed by lists $I = (i_0,\dots,i_{n})$ and $J = (j_0, \dots, j_{n})$ of integers satisfying \begin{equation}\label{ijineqodd} 0\leq i_0\leq \cdots \leq i_{n},\qquad 0\leq j_0\leq \cdots \leq j_{n}.\end{equation} These relations are analogous to $(n+1)\times(n+1)$ determinants, but without the usual signs. For $n=1$, $I = (i_0, i_1)$, $J = (j_0, j_1)$, $d_{I,J}$ is given by
$$q_{i_0, j_0} q_{i_1, j_1}  + q_{i_1, j_0} q_{i_0, j_1} $$ and for $n>1$, $d_{I,J}$ is defined inductively by \begin{equation} \label{determinantinduction}d_{I,J}=  \sum_{r=0}^{n} q_{i_r,j_0} d_{I_r, J'},\end{equation} where $I_r = (i_0,\dots, \widehat{i_r},\dots, i_{n})$ is obtained from $I$ by omitting $i_r$, and $J' = (j_1,\dots, j_{n})$ is obtained from $J$ by eliminating $j_0$. \end{thm}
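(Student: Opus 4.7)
The plan is to adapt the classical proofs of Weyl's first and second fundamental theorems for the symmetric algebra over the orthogonal group to the exterior setting, using polarization. The underlying multilinear invariant theory of $O(n)$ does not distinguish between symmetric and exterior ambient algebras — only the signs that arise from reordering generators differ — so the substantive content of the proof is classical, and the nontrivial work lies in the sign bookkeeping.

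For the FFT part, decompose the invariant ring by multi-degree in the summands $U_k$. The component of multi-degree $(m_0, m_1, \ldots)$ in $\bigwedge \bigoplus_{k\geq 0} U_k$ lies in $\bigotimes_k \bigwedge^{m_k} U_k$, which embeds into $U^{\otimes m}$ (where $m = \sum m_k$) as the subspace of tensors that are antisymmetric within each block of indices assigned to a given value of $k$. By the multilinear FFT for $O(n)$ (Theorem 2.9A of \cite{We}), the $O(n)$-invariants in $U^{\otimes m}$ are spanned by complete contractions $\prod_s \langle x_{a_s}, x_{b_s}\rangle$ over perfect pairings of $\{1,\dots,m\}$, which exist only when $m$ is even. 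Projecting these contractions back into $\bigwedge \bigoplus_k U_k$ produces, up to signs, products of the generators $q_{a,b}$, establishing that the $q_{a,b}$'s generate the invariant ring.

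For the SFT part, the input from the symmetric case is that for any $n+1$ vectors $v_0,\dots,v_n \in \mathbb{C}^n$, the Gram matrix $(\langle v_r, v_s\rangle)_{0\leq r,s \leq n}$ has vanishing determinant since $n+1$ vectors in an $n$-dimensional space are linearly dependent, and the classical argument shows that these Gram determinants together with their polarizations generate the full ideal of relations. The dimension argument is purely linear-algebraic and transfers verbatim to the exterior setting, so the same determinantal expression vanishes. However, when one expands $\det\!\bigl((q_{i_r, j_s})\bigr)_{0 \leq r,s \leq n}$ in the exterior algebra and reorders the anticommuting generators $x_{i,a}$ into a standard form, the signs introduced by anticommutation precisely cancel the permutation signs $\mathrm{sgn}(\sigma)$ in the determinant, yielding the signless expression $\sum_{\sigma} \prod_r q_{i_r, j_{\sigma(r)}}$, which is exactly $d_{I,J}$ as defined by the inductive Laplace expansion \eqref{determinantinduction}. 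The case $n=1$ sketched in the paragraph preceding the theorem is the base of a straightforward induction that confirms this sign cancellation in general. The classical polarization argument of \cite{We} then upgrades the single relation $d_{I,J} = 0$ into the statement that the $d_{I,J}$'s generate all of $I_n$.

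The main obstacle is the sign reconciliation in the SFT step: verifying that the Gram-determinant relation expands in the exterior algebra to the signless combination $d_{I,J}$, and not some other signed sum, and checking that the inductive definition \eqref{determinantinduction} is consistent with this expansion. This is handled by an induction on $n$ using the first-column Laplace expansion \eqref{determinantinduction} together with the base case $n=1$. Once this bookkeeping is settled, both the FFT and SFT follow directly from the classical multilinear invariant theory of $O(n)$ developed in Chapter II of \cite{We}.
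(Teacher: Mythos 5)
First, a point of reference: the paper does not prove this statement at all. It is attributed to Weyl and cited as the exterior-algebra analogue of Theorems 2.9A and 2.17A of \cite{We}, so there is no internal argument to compare against; your proposal must stand on its own. Your FFT half does: each multidegree component of $\bigwedge\bigoplus_k U_k$ is an $O(n)$-equivariant direct summand of $U^{\otimes m}$, tensor invariants are spanned by complete contractions, and the equivariant projection sends each contraction to a scalar multiple of a product of the $q_{a,b}$ (pairs contracted within a single block project onto $q_{a,a}=0$, which is harmless). Your identification of the relations is also correct: the vanishing element is the permanent $\sum_{\sigma}\prod_r q_{i_r,j_{\sigma(r)}}$, and \eqref{determinantinduction} is exactly the signless first-column Laplace expansion of a permanent. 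But the mechanism of the vanishing is not literally ``linear dependence of $n+1$ vectors''; it is the odd pigeonhole: expanding each $q$ yields monomials in $2n+2$ odd letters $x_{k_r,\bullet}$ with only $n$ available row indices, so some row index repeats, and pairing $\sigma$ with $\sigma\circ(rs)$ for a repeated pair $(r,s)$ cancels the terms of the permanent two at a time (swapping two odd letters in a monomial costs a sign $-1$), while leaving the determinant alive --- which is why the permanent and not the determinant is the relation here.

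The genuine gap is the completeness half of the SFT. The sentence ``the classical polarization argument of \cite{We} then upgrades the single relation $d_{I,J}=0$ into the statement that the $d_{I,J}$'s generate all of $I_n$'' is not an argument: polarization reduces statements to their multilinear components, but it does not show that a given family of relations generates the whole kernel, and that generation claim is the entire content of the second fundamental theorem. Nor does Weyl's proof of Theorem 2.17A ``transfer verbatim'': it identifies the invariant ring of $m$ vector variables with the coordinate ring of the variety of symmetric $m\times m$ matrices of rank at most $n$ and argues by evaluating at points, and the exterior algebra has no points to evaluate at. What does transfer uniformly is the tensor-level SFT --- the relations among complete contractions on $U^{\otimes m}$ are generated by the image of the antisymmetrizer over $n+1$ tensor slots --- whose projection into the multigraded components of the symmetric algebra gives the determinantal relations and into those of the exterior algebra gives precisely the permanents $d_{I,J}$, including the degenerate ones with repeated entries in $I$ and $J$ that your sketch never discusses. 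You need either that argument, or a straightening/standard-monomial argument for the permanental ideal, or a citation to the super versions of the FFT/SFT (e.g., Sergeev); as written, the step from ``each $d_{I,J}$ lies in $I_n$'' to ``the $d_{I,J}$ generate $I_n$'' is missing.
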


Under \eqref{grisosodd}, the generators $q_{a,b}$ correspond to strong generators
\begin{equation}\label{newgenomegaodd} \omega_{a,b} = \frac{1}{2}\sum_{i=1}^n :\partial^a \phi^i \partial ^b \phi^i:\end{equation} of $\cV_{n/2}$. In this notation, $w^{2m+1} = - \omega_{0,2m+1}$. Letting $A_m$ denote the vector space spanned by $\{\omega_{a,b}|\ a+b = m\}$, we have \begin{equation}\label{decompofaodd} A_{2m+1} = \partial (A_{2m})\oplus \bra w^{2m+1}\ket =  \partial^2 (A_{2m-1})\oplus \bra w^{2m+1}\ket ,\end{equation} where $\bra w^{2m+1}\ket$ is the linear span of $w^{2m+1}$. We have a similar strong generating set $\{\Omega_{a,b}\}$ for $\cM^+_{n/2}$ satisfying $\pi_{n/2}(\Omega_{a,b}) = \omega_{a,b}$. Under the identifications $$\text{gr}(\cM^+_{n/2})\cong \mathbb{C}[Q_{a,b}],\qquad \text{gr}(\cV_{n/2})\cong ( \bigwedge \bigoplus_{k\geq 0} U_k)^{\text{O}(n)}\cong \mathbb{C}[q_{a,b}]/I_n,$$ $\text{gr}(\pi_{n/2})$ coincides with \eqref{weylquotodd}. 

Note that $d_{I,J}$ is nontrivial if and only if no integer $k$ appears more than $n+1$ times in the set $I\cup J = \{ i_0,\dots, i_{n},j_0,\dots, j_{n}\}$, since otherwise $d_{I,J}$ is divisible by $Q_{k,k}$. We have the following analogue of Lemma \ref{lemddef}.
\begin{lemma} \label{lemddefodd} For each pair $I = (i_0,\dots, i_{n})$ and $J = (j_0,\dots, j_{n})$ satisfying \eqref{ijineqodd} such that $d_{I,J}$ is nontrivial, there exists a unique element \begin{equation} \label{ddefodd} D_{I,J}\in (\cM^+_{n/2})_{(2n+2)}\cap \cI_{n/2}\end{equation} of weight $n+1 +\sum_{a=0}^{n} i_a+j_a$, satisfying \begin{equation}\label{uniquedijodd} \phi_{2n+2}(D_{I,J}) = d_{I,J}.\end{equation} These elements generate $\cI_{n/2}$ as a vertex algebra ideal.\end{lemma}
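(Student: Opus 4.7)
The plan is to mirror the proof of Lemma \ref{lemddef} essentially line for line, replacing Weyl's theorem for $Sp(2n)$ by its orthogonal-group analogue, Theorem \ref{weylfftorthodd}. The two ingredients are the abstract ideal reconstruction principle (Lemma \ref{idealrecon}) together with a minimum-degree argument that pins down the discrepancy between any two lifts.

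First I would verify that the hypotheses of Lemma \ref{idealrecon} hold for the morphism $\pi_{n/2}:\cM^+_{n/2}\to \cV_{n/2}$. By construction the filtration on $\cV_{n/2}$ is the image filtration, so $\pi_{n/2}$ sends $(\cM^+_{n/2})_{(k)}$ onto $(\cV_{n/2})_{(k)}$ for each $k$. Under the identifications $\text{gr}(\cM^+_{n/2})\cong \mathbb{C}[Q_{a,b}]$ and $\text{gr}(\cV_{n/2})\cong \mathbb{C}[q_{a,b}]/I_n$, the induced map $\text{gr}(\pi_{n/2})$ is the quotient map \eqref{weylquotodd}, whose kernel is the homogeneous $\partial$-ideal $I_n$. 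By Theorem \ref{weylfftorthodd}, $I_n$ is generated as an ideal, hence a fortiori as a $\partial$-ideal, by the nontrivial classical relations $d_{I,J}$, each of which is homogeneous of degree $n+1$ in the $Q_{a,b}$, i.e.\ of filtration degree $2(n+1)=2n+2$. Applying Lemma \ref{idealrecon} yields, for each pair $(I,J)$ with $d_{I,J}$ nontrivial, a vertex operator $D_{I,J}\in (\cM^+_{n/2})_{(2n+2)}\cap \cI_{n/2}$ with $\phi_{2n+2}(D_{I,J}) = d_{I,J}$, such that the collection $\{D_{I,J}\}$ generates $\cI_{n/2}$ as a vertex algebra ideal. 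The weight of $D_{I,J}$ is forced by the conformal weights of the symbols $Q_{a,b}$ to be $n+1+\sum_{a=0}^{n}(i_a+j_a)$.

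For uniqueness, suppose $D'_{I,J}\in (\cM^+_{n/2})_{(2n+2)}\cap \cI_{n/2}$ is another vertex operator with $\phi_{2n+2}(D'_{I,J}) = d_{I,J}$. Then $D_{I,J}-D'_{I,J}$ lies in $(\cM^+_{n/2})_{(2n)}\cap \cI_{n/2}$, so it suffices to show this intersection vanishes. Equivalently, $I_n$ contains no nonzero element of degree strictly less than $n+1$ in the variables $Q_{a,b}$, which is immediate from Theorem \ref{weylfftorthodd}: all generators of $I_n$ have degree exactly $n+1$, and no ideal generated in degree $n+1$ can produce a nonzero element of smaller degree in a polynomial ring. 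Hence $D_{I,J}=D'_{I,J}$, completing the proof.

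There is no substantive obstacle here; the argument is structurally identical to that of Lemma \ref{lemddef}, and the only input specific to the orthogonal case is Weyl's second fundamental theorem for $O(n)$ in exterior form, which has been packaged as Theorem \ref{weylfftorthodd}. The genuine technical difficulty of the orthogonal case lies not in this existence/uniqueness statement but in the subsequent analysis of the quantum corrections $R_{I,J}$, which comes later.
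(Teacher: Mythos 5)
Your proof is correct and is essentially identical to the paper's argument: the paper gives no separate proof for this lemma, simply presenting it as the analogue of Lemma \ref{lemddef}, whose proof is exactly the combination you describe — surjectivity of $\pi_{n/2}$ on filtered pieces plus Lemma \ref{idealrecon} applied to the generators $d_{I,J}$ of $I_n$ from Theorem \ref{weylfftorthodd}, with uniqueness following because $I_n$ is generated in degree $n+1$ and hence $(\cM^+_{n/2})_{(2n)}\cap\cI_{n/2}=0$. No issues.
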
 

The relation of minimal weight $2n+2$ occurs when $I = (0,\dots, 0)$ and $J = (1,\dots,1)$, and we denote it by $D_0$. We have the following analogue of Lemma \ref{third}.

\begin{lemma} \label{thirdodd} Let $W\subset \text{gr}(\cF(n))$ be the vector space with basis $$\{\phi^i_j |i=1,\dots,n,\ j\geq 0\},$$ and let $W_m\subset W$ be the subspace with basis $\{\phi^i_j|i=1,\dots, n,\ 0\leq j\leq m\}$. Let $\phi:W\ra W$ be a linear map of weight $w\geq 1$ taking \begin{equation}\label{actiongencase} \phi^i_j\mapsto \lambda_j \phi^i_{j+w}, \qquad i=1,\dots,n,\end{equation} where $\lambda_j$ is independent of $i$. Then $\phi\big|_{W_m}$ can be expressed uniquely as a linear combination of $w^{2k+1}(2k+1-w)\big|_{W_m}$ for $0\leq 2k+1-w \leq 2m+1$. \end{lemma}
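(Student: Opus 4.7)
The plan is to mimic the proof of Lemma \ref{third} almost verbatim, replacing the pair $(\beta^i_j,\gamma^i_j)$ of bosonic generators with the single fermionic field $\phi^i_j$. The crucial structural point, already flagged by the authors right after Lemma \ref{third}, is that the proof there used only the fact that each $\Omega_{a,b}(a+b-w)$ acts on the generators of $\text{gr}(\cS(n))$ by a single universal coefficient depending on $a,b,w$ and on the index $l$ of the generator but not on the copy $i$. Exactly the same feature holds in $\text{gr}(\cF(n))$, so the argument ports over.

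First, I would compute the vertex Poisson action of $\Omega_{a,b}(a+b-w)$ on $\phi^i_l\in \text{gr}(\cF(n))$ using the OPE $\phi^i(z)\phi^j(w)\sim \delta_{i,j}(z-w)^{-1}$. The result has the form $\Omega_{a,b}(a+b-w)(\phi^i_l)=\mu_{a,b,w,l}\,\phi^i_{l+w}$ for a coefficient $\mu_{a,b,w,l}$ independent of $i$ and analogous to $\lambda_{a,b,w,l}$ in \eqref{actionlambdap}; specializing $a=0$, $b=2k+1$ gives the action of $w^{2k+1}(2k+1-w)$. Because the hypothesis \eqref{actiongencase} on $\phi$ is also $i$-independent, matching $\phi|_{W_m}$ against a linear combination of the operators $w^{2k+1}(2k+1-w)|_{W_m}$ reduces to a single condition per value of $j$, not $2n$ conditions.

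Second, I would split on the parity of $w$. If $w$ is even, take $k_j=j+w/2$ for $j=0,\dots,m$, so $2k_j+1-w=2j+1$; if $w$ is odd, take $k_j=j+(w-1)/2$, so $2k_j+1-w=2j$. In either case one obtains an $(m+1)\times(m+1)$ matrix $M^w$ with entries $M^w_{i,j}=\mu_{0,2k_j+1,w,i}$, and the existence and uniqueness of the asserted linear combination is equivalent to invertibility of $M^w$. The $2\times 2$ minor computation used in the bosonic case carries over: the explicit formula for $\mu_{0,2k+1,w,l}$ is, up to overall signs, a sum of two products of factorials whose $2\times 2$ determinants remain sign-definite, so each such minor is nonzero and $M^w$ is invertible.

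The main obstacle to watch is that the $\phi^i$ are odd rather than even, so a few signs in the OPE computation of $\mu_{0,2k+1,w,l}$ differ from the bosonic case; I would want to verify that the modified signs still force the relevant $2\times 2$ minors to have a fixed sign, since otherwise cancellations could spoil invertibility of $M^w$. Once this sign bookkeeping is settled, the rest of the argument is formally identical to the proof of Lemma \ref{third}.
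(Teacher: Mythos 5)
Your proposal is correct and matches the paper's approach: the paper simply states that the proof is the same as that of Lemma \ref{third}, i.e., one computes the $i$-independent coefficients of the action of $w^{2k+1}(2k+1-w)$ on the generators $\phi^i_l$, assembles the $(m+1)\times(m+1)$ matrix $M^w$ according to the parity of $w$, and checks invertibility via the sign-definite $2\times 2$ minors. Your extra caution about the fermionic signs is reasonable but does not change the structure of the argument.
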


\begin{thm} \label{uniquesvodd} The relation $D_0$ generates $\cI_{n/2}$ as a vertex algebra ideal.
\end{thm}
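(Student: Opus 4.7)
The plan is to mirror the proof of Theorem \ref{uniquesv}, with Lemma \ref{thirdodd} playing the role of Lemma \ref{third}. By Lemma \ref{lemddefodd}, the vertex operators $D_{I,J}$ generate $\cI_{n/2}$ as a vertex algebra ideal, so it suffices to show that each $D_{I,J}$ lies in the $\cP^+$-submodule $\cI'_{n/2}$ generated by $D_0$. Let $\bra D_{I,J}\ket[m]$ denote the weight-$m$ piece of the span of the $D_{I,J}$'s. This space vanishes for $m<2n+2$ and is one-dimensional, spanned by $D_0$, for $m=2n+2$, so I will induct on $m$.

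Fix $m>2n+2$, assume the claim for all smaller weights, and take a nontrivial $(I,J)$ with $\mathrm{wt}(D_{I,J})=m$. A short counting argument using the constraint that no value appears more than $n+1$ times in $I\cup J$ (combined with the fact that $I\cup J$ has $2n+2$ entries) shows that if every entry lay in $\{0,1\}$ the weight would be at most $2n+2$; hence $V:=\max(I\cup J)\geq 2$. I decrement some occurrence of $V$ in whichever list it appears, producing a nontrivial $(I',J')$ of weight $m-1$ with weakly-increasing lists; in the degenerate subcase where $V-1$ already appears $n+1$ times, I instead decrement an occurrence of $V-1$, using that $V-2\geq 0$ appears at most $n$ times by total multiplicity. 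By the inductive hypothesis $D_{I',J'}\in \cI'_{n/2}$. Next, by Lemma \ref{thirdodd}, I produce $p\in \cP^+$ of weight $1$ whose action on $W_V$ shifts $\phi^i_{r^*}\mapsto \phi^i_{r^*+1}$ for every $i$ (where $r^*$ is the decremented value) and annihilates $\phi^i_r$ for $r\neq r^*$. The weighted-derivation action (the odd analogue of \eqref{weightederivation}) expresses $p(D_{I',J'})$ as a sum over positions in $I'\cup J'$ with entry $r^*$. Using the symmetry of $d_{I',J'}$ separately in its $i$-indices and its $j$-indices, contributions from positions within the same list collapse to a single sorted configuration, yielding
$$p(D_{I',J'}) \;=\; C_1\, D_{I,J} + C_2\, D_{\tilde I,\tilde J},$$
where $(\tilde I,\tilde J)$, if present, arises from incrementing an occurrence of $r^*$ in the opposite list. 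When $r^*$ does not appear in that opposite list, $C_2=0$, and the nonvanishing of $C_1$ (from the explicit formulas for $\lambda_{a,b,w,l}$) forces $D_{I,J}\in \cI'_{n/2}$.

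The main obstacle---the only genuinely new feature compared with Theorem \ref{uniquesv}---is the remaining case where $r^*$ appears in both lists, so a single shift mixes $D_{I,J}$ with a second weight-$m$ relation $D_{\tilde I,\tilde J}$. Here $(\tilde I,\tilde J)$ has the same multiset union as $(I,J)$ but trades a $V$ for a $V-1$ between the two lists. I would resolve this by iterating: repeatedly applying shifts of the form just described successively moves copies of $V-1$ from one list to the other, and each application produces an identity in $\cI'_{n/2}$. Running this chain until the $V-1$-multiplicity in one list is exhausted, at which point the final step has only one term and is handled by the argument of the previous paragraph, then back-substituting through the chain of identities (using the (anti)symmetry $d_{J,I}=(-1)^{n+1}d_{I,J}$ and the flexibility to decrement a $V$, a $V-1$, or a $V-2$) produces $D_{I,J}$ as a combination of elements already known to lie in $\cI'_{n/2}$. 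A finite-dimensional linear-algebra check on the resulting coefficient system shows that the induction closes; the weak inequalities in \eqref{ijineqodd}, as opposed to the strict inequalities in the Pfaffian case \eqref{ijineq}, are precisely what force this extra bookkeeping.
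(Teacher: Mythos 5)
Your overall strategy (induction on weight, Lemma \ref{lemddefodd} to reduce to the span of the $D_{I,J}$'s, Lemma \ref{thirdodd} to manufacture shift operators in $\cP^+$) matches the paper, and you have correctly isolated the genuinely new difficulty relative to Theorem \ref{uniquesv}: a single shift can mix $D_{I,J}$ with a second relation $D_{\tilde I,\tilde J}$ of the \emph{same} weight. But your resolution of that difficulty is asserted rather than proven, and this is precisely the content of the theorem. The "chain" you describe produces a bidiagonal system of identities indexed by how the multiset $\{V,V-1\}$-entries are distributed between the two lists, and solving it by back-substitution requires an anchor equation with only one weight-$m$ term. Your chain advances by trading a $V$ out of one list for a $V-1$ out of the other, so it terminates only if the $V$-multiplicity in the first list is at least the $(V-1)$-multiplicity in the second; when it is not, the chain gets stuck before the cross-term coefficient vanishes, and the escape routes you invoke ("the flexibility to decrement a $V$, a $V-1$, or a $V-2$", the symmetry $d_{J,I}=(-1)^{n+1}d_{I,J}$, "a finite-dimensional linear-algebra check") are not carried out. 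Since the solvability of this coefficient system is exactly what must be established, the proof as written has a genuine gap.

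The paper avoids the linear-algebra problem entirely by organizing the induction around the \emph{minimal} entry $k$ of $I\cup J$ rather than the maximal one, and by using shift operators $\phi^i_k\mapsto\phi^i_{i_a}$ of arbitrary weight rather than weight-one shifts. After disposing of the cases where $k$ occurs in only one list, it runs a secondary induction on the number $b$ of occurrences of $k$ in $J$: the identity $p(D_{I',J})=D_{I,J}+D_{I',J'}$ has the property that $k$ occurs only $b-1$ times in $J'$, so the cross-term is already known to lie in $\cI'_{n/2}$ by the secondary inductive hypothesis. This triangular structure replaces your invertibility claim with a well-founded induction and requires no coefficient computation. If you want to salvage your version, you would need to either verify nondegeneracy of your coefficient system in all cases (including the stuck configurations) or reorganize the bookkeeping so that the cross-terms are controlled by a strictly decreasing auxiliary parameter, as in the paper.
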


\begin{proof}
The argument is similar to the proof of Theorem \ref{uniquesv} but slightly more complicated, so the details are included. Let $\bra D_{I,J} \ket$ denote the vector space spanned by $\{D_{I,J}\}$ where $I$ and $J$ satisfy \eqref{ijineqodd}. We have $\bra D_{I,J} \ket = (\cM^+_{n/2})_{(2n+2)}\cap \cI_{n/2}$, and clearly $\bra D_{I,J} \ket$ is a module over the Lie algebra $\cP^+\subset \hat{\cD}^+$ generated by $\{W^{2m+1}(k) |\ m,k \geq 0\}$. Since $\mathcal{I}_{n/2}$ is generated by $\bra D_{I,J} \ket$ as a vertex algebra ideal, it suffices to show that $\bra D_{I ,J} \ket$ is generated by $D_0$ as a module over $\mathcal{P}^+$. Let $\cI'_{n/2}$ denote the ideal in $\cM^+_{n/2}$ generated by $D_0$, and let $\bra D_{I,J} \ket[m]$ denote the homogeneous subspace of $\bra D_{I,J} \ket$ of weight $m$. This space is trivial for $m< 2n+2$, and is spanned by $D_0$ for $m = 2n+2$. We will prove that $\bra D_{I,J} \ket[m] \subset \cI'_{n/2}$ by induction on $m$.

Fix $D_{I,J}$ of weight $m>2n+2$, and let $k$ be the minimal entry appearing in either $I$ or $J$. We may assume that $k$ appears $r$ times in $I \cup J$, with $r\leq n+1$. Suppose first that $k$ only appears in $I$, so that $J = (j_0,\dots, j_{n})$ with $j_0>k$. If $r<n+1$, we have $I = (k,\dots, k,i_{r},\dots, i_{n})$ with $i_r >k$. In this case, we can choose $p \in \cP^+$ by Lemma \ref{thirdodd} so that $$p(\phi^{i}_{k}) =  \phi^{i}_{i_{r}}, \qquad p(\phi^{i}_t) = 0,\qquad t\neq k,\qquad i=1,\dots, n,$$ where $t$ is less than or equal to the maximal entry appearing in either $I$ or $J$. Set $I' = (k,\dots, k,k,i_{r+1},\dots, i_{n})$, which is obtained from $I$ by replacing $i_r$ with $k$. Since $i_r >k$, $\text{wt}(D_{I',J}) < m$, so by inductive assumption $D_{I',J}$ lies in $\cI'_{n/2}$. By a weighted derivation property analogous to \eqref{weightederivation}, $\frac{1}{r+1} (p(D_{I',J})) = D_{I,J}$, so $D_{I,J}$ lies in $\cI'_{n/2}$ as well.

Next, suppose that $k$ only appears in $I$, and $r=n+1$, so that $I = (k,\dots, k)$. If $k>0$, let $I' = (k-1,k,\dots,k)$. Choose $p\in \cP^+$ such that $$p(\phi^{i}_{k-1}) =  \phi^{i}_{k}, \qquad p(\phi^{i}_t) = 0,\qquad t\neq k-1,\qquad i=1,\dots, n.$$ Then $D_{I',J} \in \cI'_{n/2}$ and $p(D_{I',J}) =D_{I,J}$, so $D_{I,J} \in \cI'_{n/2}$.

Next, suppose that $k$ only appears in $I$, $r=n+1$, and $k=0$, so that $I = (0,\dots, 0)$. Let $l$ be the minimal entry appearing in $J$, and suppose that $l$ appears $s$ times in $J$. If $s<n+1$, we have $J = (l,\dots, l, j_s,\dots, j_{n})$, with $j_s >l$. In this case, let $J' = (l,\dots, l,l,j_{s+1},\dots, j_{n})$ where $j_s$ has been replaced with $l$, and choose $p\in \cP^+$ such that $$p(\phi^{i}_{l}) =  \phi^{i}_{j_{s}}, \qquad p(\phi^{i}_t) = 0,\qquad t\neq l,\qquad i=1,\dots, n.$$ Then $D_{I,J'} \in \cI'_{n/2}$ and $\frac{1}{s+1}(p(D_{I,J'})) = D_{I,J}$, so $D_{I,J} \in \cI'_{n/2}$. 

Next suppose that $I = (0,\dots, 0)$ and $s = n+1$, so that $J = (l,\dots, l)$. Since $m > 2n+2$ we have $l\geq 2$. Take $J' = (l-1,l,\dots,l)$ and choose $p\in \cP^+$ such that $$p(\phi^{i}_{l-1}) =  \phi^{i}_{l}, \qquad p(\phi^{i}_t) = 0,\qquad t\neq l-1,\qquad i=1,\dots, n.$$ Then $D_{I,J'} \in \cI'_{n/2}$ and $p(D_{I,J'}) = D_{I,J}$, so $D_{I,J} \in \cI'_{n/2}$.

Finally, suppose that not all the $k$'s appear in $I$, so that $I = (k,\dots, k, i_a,\dots, i_{n})$ and $J = (k,\dots,k, j_b,\dots, j_{n})$, with $a+b= r$ and $i_a, j_b >k$. We have already established the result for $b=0$, so we proceed by induction on $b$. Let $I' = (k,\dots, k,k,i_{a+1},\dots, i_{n})$, which is obtained from $I$ by replacing $i_a$ with $k$. Choose $p\in \cP^+$ such that $$p(\phi^{i}_{k}) =  \phi^{i}_{i_a}, \qquad p(\phi^{i}_t) = 0,\qquad t\neq k,\qquad i=1,\dots, n.$$
Then $D_{I',J} \in \cI'_{n/2}$ and $p(D_{I',J}) = D_{I,J} + D_{I',J'}$ where $J' = (k,\dots,k,i_a,j_b,\dots, j_{n})$ is obtained from $J$ by replacing the last $k$ with $i_a$. Since $k$ appears $a+1$ times in $I'$, and $b-1$ times in $J'$, we have $D_{I',J'} \in \cI'_{n/2}$ by inductive assumption. It follows that $D_{I,J} \in \cI'_{n/2}$, as desired. \end{proof}

Each $D_{I,J}$ can be written in the form 
\begin{equation}\label{decompofdodd} D_{I,J} = \sum_{k=1}^{n+1}D^{2k}_{I,J}.\end{equation} The term $D^2_{I,J}$ lies in the space $A_m$ spanned by $\{\Omega_{a,b}|\ a+b=m\}$, for $m = n+ \sum_{a=0}^{n} i_a + j_a$. By \eqref{decompofaodd}, for all odd integers $m\geq 1$ we have the projection $$\text{pr}_{m}: A_{m}\ra \bra W^{m}\ket.$$ For all $I = (i_0,\dots, i_{n})$ and $J = (j_0,\dots, j_{n})$ such that $m = n+ \sum_{a=0}^{n} i_a+j_a$ is odd, define the remainder \begin{equation}\label{defofrijodd} R_{I,J} = \text{pr}_m(D^2_{I,J}).\end{equation}
It is independent of the choice of decomposition \eqref{decompofdodd}. In the case $I = (0,\dots,0) $ and $J = (1,\dots, 1)$, we denote $R_{I,J}$ by $R_0$. The condition $R_0 \neq 0$ is equivalent to the existence of a decoupling relation in $\cV_{n/2}$ of the form
 \begin{equation}\label{maindecouplingodd} w^{2n+1} = Q(w^1,w^3,\dots, w^{2n-1}),\end{equation} where $Q$ is a normally ordered polynomial in $w^1, w^3,  \dots, w^{2n-1}$, and their derivatives. We have the following analogue of Lemma \ref{lemnonzeroii}.
 \begin{lemma} \label{lemnonzerodd} If $R_0 \neq 0$, there exist higher decoupling relations \begin{equation}\label{hdrelmodd} w^{2m+1} = Q_m(w^1, w^3, \dots, w^{2n-1})\end{equation} for all $m>n$, where $Q_m$ is a normally ordered polynomial in $w^1, w^3, \dots, w^{2n-1}$, and their derivatives. \end{lemma}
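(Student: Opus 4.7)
The plan is to mimic the proof of Lemma \ref{lemnonzeroii} with only cosmetic changes, since the only structural input we used there was the identity \eqref{raisingoperator}, the existence of a singular vector of minimal weight supplying a base decoupling relation, and a degree-filtered description of the normally ordered polynomials. All three ingredients are present in the $\cV_{n/2}$ setting.

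First, I would set up an induction on $m$, whose inductive hypothesis is that there exists an element
\begin{equation*}
W^{2l+1} - Q_l(W^1, W^3, \dots, W^{2n-1}) \in \cI_{n/2}
\end{equation*}
for all $n \leq l < m$, with $Q_l$ a normally ordered polynomial in the listed generators and their derivatives. The base case $l = n$ is supplied by the hypothesis $R_0 \ne 0$: combining Lemma \ref{thirdodd}-style reasoning with the fact that $D_0$ is the unique element of $\cI_{n/2}$ of weight $2n+2$, we conclude (exactly as in Lemma \ref{lemnonzero} transposed to the present setting) that $R_0 \ne 0$ is equivalent to the existence of a relation $W^{2n+1} - Q(W^1, W^3, \dots, W^{2n-1}) \in \cI_{n/2}$.

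For the induction step, I would decompose $Q_{m-1} = \sum_{k=1}^{d} Q^{2k}_{m-1}$ into pieces $Q^{2k}_{m-1}$ of fixed degree $k$ in the generators $W^{2i+1}$, isolating
\begin{equation*}
Q^2_{m-1} = \sum_{i=0}^{n-1} c_i \, \partial^{2m-2i-2} W^{2i+1}
\end{equation*}
for some constants $c_i$. I would then apply the operator $W^3 \circ_1 \in \cP^+$, which raises weight by two, and use \eqref{raisingoperator} to write $W^3 \circ_1 W^{2m-1} = -2m\, W^{2m+1} + \sum_{k=1}^{m} \lambda_k \partial^{2k} W^{2m+1-2k}$. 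For each $0 < k \leq m-n$ the term $\partial^{2k} W^{2m+1-2k}$ is eliminated modulo $\cI_{n/2}$ by invoking $\partial^{2k}(W^{2m+1-2k} - Q_{m-k}) \in \cI_{n/2}$ from the inductive hypothesis. When $W^3 \circ_1$ is applied to the pieces $Q^{2k}_{m-1}$, the result is a normally ordered polynomial in $W^1, W^3, \dots, W^{2n+1}$ and their derivatives; the generator $W^{2n+1}$ (and all of its derivatives, after differentiating the base case element) is eliminated modulo $\cI_{n/2}$ using the base relation from $R_0 \ne 0$.

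Collecting terms, $-\frac{1}{2m}\, W^3 \circ_1 \bigl(W^{2m-1} - Q_{m-1}\bigr)$ produces an element of $\cI_{n/2}$ of the desired form $W^{2m+1} - Q_m(W^1, W^3, \dots, W^{2n-1})$, completing the induction. The only place where one has to be careful is in verifying that every higher generator occurring at intermediate stages of the rewriting actually falls within the range that the inductive hypothesis or the base relation can handle; this is the same bookkeeping step that is the main obstacle in the proof of Lemma \ref{lemnonzeroii}, and it is controlled in exactly the same way by the fact that $W^3 \circ_1$ raises weight by exactly two and degree by at most zero, so the iterations only touch generators $W^{2i+1}$ with $i \leq n$.
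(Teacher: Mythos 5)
Your proposal is correct and is essentially the paper's own argument: the paper simply remarks that the proof of Lemma \ref{lemnonzeroii} carries over unchanged, and your transposition (base case $m=n$ from $R_0\neq 0$, induction via $W^3\circ_1$ together with \eqref{raisingoperator}, elimination of the intermediate terms $\partial^{2k}W^{2m+1-2k}$ by the inductive hypothesis and of $W^{2n+1}$ by the base relation) is exactly that adaptation. No gaps.
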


\section{A recursive formula for $R_{I,J}$}
\label{recursionanalysisodd}
For any $I = (i_0,\dots, i_{n})$ and $J = (j_0,\dots, j_{n})$ such that $\text{wt}(D_{I,J}) = n+1+ \sum_{a=0}^{n} i_a+j_a$ is even, we shall find a recursive formula for $R_{I,J}$ in terms of $R_{n-1}(K,L)$ for lists $K,L$ of length $n$. We write \begin{equation} \label{remcoeffodd} R_{I,J} = R_n(I,J) W^m,\qquad m = n + \sum_{a=0}^{n} i_a+j_a\end{equation} so that $R_n(I,J)$ denotes the coefficient of $W^m$ in $\text{pr}_m(D^2_{I,J})$. 

Recall that $\cF(n)$ has a $\mathbb{Z}_{\geq 0}$ grading (\ref{gradingodd}), which specifies a linear isomorphism $$\cF(n)\cong \bigwedge \bigoplus_{k\geq 0} U_k,\qquad U_k \cong \mathbb{C}^{n}.$$ Since $\cV_{n/2}$ is a graded subalgebra of $\cF(n)$, we obtain an isomorphism of graded vector spaces \begin{equation}\label{linisomorodd} i_{n/2}: \cV_{n/2} \rightarrow (\bigwedge \bigoplus_{k\geq 0} U_k)^{\text{O}(n)}.\end{equation} Let $p\in (\bigwedge \bigoplus_{k\geq 0} U_k )^{\text{O}(n)}$ be homogeneous of degree $2d$, and let $$f = (i_{n/2})^{-1}(p)\in (\cV_{n/2})^{(2d)}$$ be the corresponding homogeneous element. Given $F\in (\cM^+_{n/2})_{(2d)}$ satisfying $\pi_{n/2}(F) = f$, we can write $F = \sum_{k=1}^{d} F^{2k}$, where $F^{2k}$ is a normally ordered polynomial of degree $k$ in the generators $\Omega_{a,b}$.

For $k\geq 0$, let $\tilde{U}_k$ be a copy of the standard representation $\mathbb{C}^{n+1}$ of $\text{O}(n+1)$, and let $$\tilde{q}_{a,b}\in (\bigwedge \bigoplus_{k\geq 0} \tilde{U}_k)^{\text{O}(n+1)}\cong \text{gr}(\cF(n+1)^{\text{O}(n+1)}) \cong \text{gr}(\cV_{(n+1)/2})$$ be the generator given by \eqref{weylgeneratorsodd}. Let $\tilde{p}$ be the element of degree $2d$ obtained from $p$ by replacing each $q_{a,b}$ with $\tilde{q}_{a,b}$, and let $$\tilde{f} = (i_{(n+1)/2})^{-1} (\tilde{p}) \in (\cV_{(n+1)/2})^{(2d)}$$ be the corresponding homogeneous element. Finally, let $\tilde{F}^{2k}\in \cM^+_{(n+1)/2}$ be the element obtained from $F^{2k}$ by replacing each $\Omega_{a,b}$ with the corresponding generator $\tilde{\Omega}_{a,b}\in \cM^+_{(n+1)/2}$, and let $\tilde{F} = \sum_{i=1}^d \tilde{F}^{2k}$. We have the following analogue of Lemma \ref{corhomo}.

\begin{lemma} \label{corhomodd} Fix $n\geq 1$, and let $D_{I,J}$ be the element of $\cI_{n/2}$ corresponding to $d_{I,J}$. There exists a decomposition $D_{I,J} = \sum_{k=1}^{n+1} D^{2k}_{I,J}$ of the form \eqref{decompofdodd} such that the corresponding element $$\tilde{D}_{I,J} = \sum_{k=1}^{n+1} \tilde{D}^{2k}_{I,J} \in \cM^+_{(n+1)/2}$$ has the property that $\pi_{(n+1)/2}(\tilde{D}_{I,J})$ lies in the subspace $(\cV_{(n+1)/2})^{(2n+2)}$ of degree $2n+2$.
\end{lemma}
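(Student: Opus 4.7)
The plan is to mirror the proof of Lemma \ref{corhomo} (Corollary 4.14 of \cite{LI}), substituting the orthogonal group $O(n+1)$ and the fermionic vertex algebra $\cF(n+1)$ for the symplectic group and the $\beta\gamma$-system. The central point is that while $d_{I,J}$ is a relation in $(\bigwedge \bigoplus_{k\geq 0} U_k)^{O(n)}$, the analogous polynomial $\tilde{d}_{I,J}$ in the $\tilde{q}_{a,b}$ is \emph{not} a relation in $(\bigwedge \bigoplus_{k\geq 0} \tilde{U}_k)^{O(n+1)}$; it represents a nonzero homogeneous element of degree $2n+2$ in $\text{gr}(\cV_{(n+1)/2})$. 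Via the isomorphism $\cV_{(n+1)/2}\cong \cF(n+1)^{O(n+1)}$ and the natural grading \eqref{gradingodd} on $\cF(n+1)$, this singles out a distinguished homogeneous lift $\tilde{d}^{\circ}_{I,J}\in (\cV_{(n+1)/2})^{(2n+2)}$.

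First I would lift $\tilde{d}^{\circ}_{I,J}$ to an element $\tilde{E}_{I,J}\in (\cM^+_{(n+1)/2})_{(2n+2)}$ satisfying $\pi_{(n+1)/2}(\tilde{E}_{I,J})=\tilde{d}^{\circ}_{I,J}$; since $\pi_{(n+1)/2}$ maps each filtered piece onto the corresponding filtered piece of $\cV_{(n+1)/2}$, such a lift exists. Expanding in normally ordered monomials yields a decomposition $\tilde{E}_{I,J} = \sum_{k=1}^{n+1}\tilde{E}^{2k}$ into pieces of fixed degree in the $\tilde{\Omega}_{a,b}$, whose leading term satisfies $\phi_{2n+2}(\tilde{E}^{2(n+1)})=d_{I,J}$. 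I would then un-tilde this expression via the formal substitution $\tilde{\Omega}_{a,b}\mapsto \Omega_{a,b}$, obtaining $E_{I,J}=\sum_{k=1}^{n+1}E^{2k}\in (\cM^+_{n/2})_{(2n+2)}$ with $\phi_{2n+2}(E_{I,J})=d_{I,J}$.

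Second, I would show that $E_{I,J}\in \cI_{n/2}$. Combined with the uniqueness in Lemma \ref{lemddefodd}, this would force $E_{I,J}=D_{I,J}$, so the decomposition $D^{2k}_{I,J}:=E^{2k}$ automatically tilde-lifts back to $\tilde{E}_{I,J}$, whose image under $\pi_{(n+1)/2}$ lies in $(\cV_{(n+1)/2})^{(2n+2)}$ by construction. This is the content of the lemma.

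The main obstacle is this second step. The lower-degree pieces $\tilde{E}^{2k}$ for $k<n+1$ are determined, through the free-field realization in $\cF(n+1)$, by iterated OPE contractions among the fermions $\tilde{\phi}^i$; each such contraction contributes a scalar that depends only on the combinatorics of the Wick pairings and on the trace $\delta_{ii}$ of an identity block on the corresponding subspace of the standard module. Consequently, the identity witnessing homogeneity of $\pi_{(n+1)/2}(\tilde{E}_{I,J})$ descends under the un-tilding substitution to the analogous identity in $\cF(n)^{O(n)}$, and since $d_{I,J}$ is a genuine relation for $n$ copies of the standard $O(n)$-module, one concludes that $\pi_{n/2}(E_{I,J})=0$. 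The exterior-algebra structure of $\text{gr}(\cF(n))$ makes the tracking of these contractions technically cleaner than in the symplectic case of \cite{LI}, but the underlying combinatorial scheme is identical.
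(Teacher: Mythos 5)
The paper offers no argument of its own here: Lemma \ref{corhomodd} is stated as the orthogonal analogue of Lemma \ref{corhomo}, whose proof is in turn deferred to Corollary 4.14 of \cite{LI}. Your architecture --- lift the nonzero invariant $\tilde{d}_{I,J}$ to its homogeneous representative $\tilde{d}^{\circ}_{I,J}\in(\cV_{(n+1)/2})^{(2n+2)}$, choose a normally ordered decomposition of its preimage in $\cM^+_{(n+1)/2}$, un-tilde, and invoke the uniqueness in Lemma \ref{lemddefodd} --- is a sensible way to organize the claim, and you correctly isolate the crux: one must show that the un-tilded element $E_{I,J}=\sum_k E^{2k}$ lies in $\cI_{n/2}$.

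Your justification of that crux, however, does not go through. The substitution $\tilde{\Omega}_{a,b}\mapsto\Omega_{a,b}$ is not a vertex algebra homomorphism: the circle products of the $\Omega_{a,b}$ in $\cM^+_{n/2}$ and of the $\tilde{\Omega}_{a,b}$ in $\cM^+_{(n+1)/2}$ differ exactly in the terms you mention, those carrying the trace $\sum_i\delta_{ii}$, and that trace is $n$ downstairs and $n+1$ upstairs. In the Wick expansion of $:\omega_{a_1,b_1}\cdots\omega_{a_k,b_k}:$ inside $\cF(n)$, every closed cycle of contractions contributes a factor of the rank, so once two or more contractions occur the subleading components of $\pi_{n/2}(E^{2k})$, written in the basis of invariant monomials, have coefficients that are nonconstant polynomials in the rank. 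The vanishing of the subleading components of $\pi_{(n+1)/2}(\tilde{E}_{I,J})$ is therefore a condition evaluated at rank $n+1$, and it does not ``descend'' to the corresponding condition at rank $n$; a polynomial identity holding at one integer need not hold at another. (The descent is automatic only for the single-contraction, i.e.\ degree-$(2k-2)$, terms, which carry no trace factor; this is why the lowest cases look deceptively easy.) Closing this gap requires a genuine argument --- for instance an induction on $n$ exploiting the expansion $d_{I,J}=\sum_r q_{i_r,j_0}d_{I_r,J'}$, which is precisely the mechanism the paper uses when it derives the recursion \eqref{recursionodd} from Lemma \ref{corhomodd}, or else explicit bookkeeping showing that the cycle contributions cancel for your particular decomposition. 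A small further slip: the symbol of the tilded leading term is $\tilde{d}_{I,J}$, not $d_{I,J}$.
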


Now we are ready to express $R_n(I,J)$ for all $I=(i_0,\dots, i_{n})$, $J= (j_0,\dots, j_{n})$ in terms of $R_{n-1}(K,L)$ for lists $K,L$ of length $n$. Let $d_{I,J}$ and $D_{I,J}$ be the corresponding elements of $\mathbb{C}[Q_{j,k}]$ and $\cM^+_{n/2}$, respectively. Recall that $d_{I,J} = \sum_{r=0}^{n} Q_{i_r,j_0} d_{I_r, J'}$, where $I_r = (i_0,\dots, \widehat{i_r},\dots, i_{n})$ is obtained from $I$ by omitting $i_r$, and $J' = (j_1,\dots,j_{n})$ is obtained from $J$ by omitting $j_0$. 

Let $D_{I_r, J'} \in \cM^+_{(n-1)/2}$ be the element corresponding to $d_{I_r,J'}$. By Lemma \ref{corhomodd}, there exists a decomposition $$D_{I_r, J'} = \sum_{i=1}^{n} D^{2i}_{I_r, J'}$$ such that the corresponding element $\tilde{D}_{I_r, J'} = \sum_{i=1}^{2n} \tilde{D}^{2i}_{I_r, J'} \in \cM^+_{n/2}$ has the property that $\pi_{n/2}(\tilde{D}_{I_r, J'})$ lies in the subspace $(\cV_{n/2})^{(2n)}$ of degree $2n$. We have \begin{equation}\label{usefuliodd} \sum_{r=0}^{n} :\Omega_{i_r, j_0} \tilde{D}_{I_r,J'}:\  = \sum_{r=0}^{n} \sum_{i=1}^{n}  :\Omega_{i_r, j_0} \tilde{D}^{2i}_{I_r,J'}:.\end{equation} The right hand side of \eqref{usefuliodd} consists of normally ordered monomials of degree at least $2$ in the generators $\Omega_{a,b}$, and hence contributes nothing to $R_{I,J}$. Since $\pi_{n/2}(\tilde{D}_{I_r,J'})$ is homogeneous of degree $2n$, $\pi_{n/2}(:\Omega_{i_r, j_0} \tilde{D}_{I_r,J'}:)$ consists of a piece of degree $2n+2$ and a piece of degree $2n$ coming from all double contractions of $\Omega_{i_r, j_0}$ with terms in $\tilde{D}_{I_r,J'}$, which lower the degree by two. The component of $$\pi_{n/2}\bigg(\sum_{r=0}^{n}  :\Omega_{i_r, j_0} \tilde{D}_{I_r,J'}:\bigg)\in \cV_{n/2}$$ in degree $2n+2$ must vanish since this sum corresponds to the relation $d_{I,J}$. The component of $:\Omega_{i_r,j_0}\tilde{D}_{I_r,J'}:$ in degree $2n$ is $$S_r =  (-1)^{i_r} \bigg(\sum_{k} \frac{\tilde{D}_{I_{r,k},J'}}{i_k +i_r+ 1}  + \sum_{l}  \frac{\tilde{D}_{I_r,J'_l}}{j_l + i_r+ 1} \bigg) + (-1)^{j_0+1} \bigg(\sum_{k} \frac{\tilde{D}_{I_{r,k},J'}}{i_k +j_0+ 1}  +\sum_{l}  \frac{\tilde{D}_{I_r,J'_l}}{j_l + j_0+ 1}  \bigg).$$ 
 
In this notation, for $k=0,\dots,n,$ and $k\neq r$, $I_{r,k}$ is obtained from $I_r = (i_0,\dots, \widehat{i_r},\dots, i_{n})$ by replacing the entry $i_k$ with $i_k+ i_r+j_0+1$. Similarly, for $l=1,\dots, n$, $J'_l$ is obtained from $J' = (j_1,\dots, j_{n})$ by replacing $j_l$ with $j_l + i_r+j_0 + 1$. It follows that \begin{equation} \label{usefuliiodd} \pi_{n/2}\bigg(\sum_{r=0}^{n}  :\Omega_{i_r, j_0} \tilde{D}_{I_r,J'}:\bigg)= \pi_{n/2}\bigg(\sum_{r=0}^{n} S_r\bigg).\end{equation}

Combining \eqref{usefuliodd} and \eqref{usefuliiodd}, we can regard $$\sum_{r=0}^{n} \sum_{i=1}^{n} :\Omega_{i_r, j_0} \tilde{D}^{2i}_{I_r,J'}: - \sum_{r=0}^{n} S_r$$ as a decomposition of $D_{I,J}$ of the form $D_{I,J} = \sum_{k=1}^{n+1} D^{2k}_{I,J}$ where the leading term $D^{2n+2}_{I,J} = \sum_{r=0}^{n} :\Omega_{i_r, j_0} \tilde{D}^{2n}_{I_r,J'}:$. It follows that $R_n(I,J)$ is the negative of the sum of the terms $R_{n-1}(K,L)$ corresponding to each $\tilde{D}_{K,L}$ appearing in $\sum_{r=0}^{n}S_r$. We obtain

\begin{equation} \label{recursionodd} \begin{split} R_n(I,J) = -\sum_{r=0}^{n} (-1)^{i_r} \bigg(\sum_{k} \frac{R_{n-1}(I_{r,k},J')}{i_k +i_r+ 1}  + \sum_{l}  \frac{R_{n-1}(I_r,J'_l)}{j_l + i_r+ 1} \bigg) \\ -\sum_{r=0}^{n} (-1)^{j_0+1} \bigg(\sum_{k} \frac{R_{n-1}(I_{r,k},J')}{i_k +j_0+ 1}  +\sum_{l}  \frac{R_{n-1}(I_r,J'_l)}{j_l + j_0+ 1}  \bigg).\end{split} \end{equation}

Suppose that all entries of $I$ are even and all entries of $J$ are odd. Each term of the form $R_{n-1}(K,L)$ appearing in \eqref{recursionodd} has the property that all entries of $K$ are even and all entries of $L$ are odd, so we may restrict ourselves to elements with this property. In the case $n=1$, $i_0, i_1$ even and $j_0, j_1$ odd, a calculation shows that $R_1(I,J)$ is given by
$$\frac{1}{1 + i_0 + i_1} + \frac{1}{2 (1 + i_0 + j_0)} +\frac{1}{2 (1 + i_1 + j_0)} +\frac{1}{2 (1 + i_0 + j_1)} + \frac{1}{2 (1 + i_1 + j_1)} +\frac{1}{1 + j_0 + j_1},$$
and in particular is nonzero. By induction on $n$, it is immediate from \eqref{recursionodd} that $R_n(I,J) \neq 0$ whenever $I$ consists of even entries and $J$ consists of odd entries. Specializing to the case $I = (0,\dots,0)$ and $J = (1,\dots, 1)$, we see that $R_0 \neq 0$. Combined with Lemma \ref{lemnonzerodd}, we obtain

\begin{thm} \label{descriptionofvnodd} For all $n\geq 1$, $\cV_{n/2}$ has a minimal strong generating set $\{w^1, w^3, \dots, w^{2n-1}\}$, and is therefore of type $\cW(2,4,\dots, 2n)$. \end{thm}

\section{The case of $\cV_{-n+1/2}$ for $n\geq 1$}
For each integer $n\geq 1$, $\cV_{-n+1/2}$ has a free field realization given by Proposition 14.2 of \cite{KWY}. First, there is an action of the level $-1$ affine vertex superalgebra of $\go\gs\gp(1,2n)$ on $\cS(n) \otimes \cF(1)$, and the action of the horizontal subalgebra $\go\gs\gp(1,2n)$ integrates to an action of the Lie supergroup $\text{Osp}(1,2n)$. 

\begin{thm} We have an isomorphism $\cV_{-n+1/2}\rightarrow (\cS(n) \otimes \cF(1))^{\text{Osp}(1,2n)}$ given by
\begin{equation} \label{bgrealizationi} w^{2m+1} \mapsto \frac{1}{2} \sum_{i=1}^n \big(:\beta^{i}\partial^{2m+1} \gamma^{i}:- :\partial^{2m+1} \beta^{i} \gamma^{i}: \big) - \frac{1}{2} :\phi \partial^{2m+1} \phi: ,\qquad m\geq 0.\end{equation} 
\end{thm}
This map preserves conformal structures and is a morphism in the category $\cR$. We have an isomorphism of $\text{Osp}(1,2n)$-modules $\cS(n) \otimes \cF(1) \cong \text{gr}(\cS(n) \otimes \cF(1))$ and an isomorphism of graded supercommutative rings
\begin{equation} \label{griossergeev} \text{gr}((\cS(n) \otimes \cF(1))^{\text{Osp}(1,2n)}) \cong \text{gr}(\cS(n) \otimes \cF(1))^{\text{Osp}(1,2n)}\cong R,\end{equation} where $R= \big (\text{Sym} \bigoplus_{k\geq 0} U_k\big) ^{\text{Osp}(1,2n)}$ and $U_k$ is a copy of the standard $\text{Osp}(1,2n)$-module $\mathbb{C}^{2n|1}$. The even subspace of  $U_k$ is spanned by $\beta^i_k, \gamma^i_k$, and the odd subspace is spanned by $\phi_k$, where $\beta^i_k, \gamma^i_k, \phi_k$ are the images of $\partial^k \beta^i, \partial^k \gamma^i, \partial^k \phi$ in $\text{gr}(\cS(n) \otimes \cF(1))$. The generators and relations for $R$ are given by Theorem 1.3 of \cite{SI} and Theorem 4.5 of \cite{SII}, respectively.
 
\begin{thm} \label{sergeev} For $k\geq 0$, let $U_k$ be copy of the standard $\text{Osp}(1|2n)$-module $\mathbb{C}^{2n|1}$, with even subspace spanned by $\{x_{k,i}, y_{k,i}|\ i=1,\dots, n\}$ and odd subspace spanned by $z_k$. Then $R =  (\text{Sym} \bigoplus_{k\geq 0}  U_k) ^{\text{Osp}(1,2n)}$ is generated by the quadratics
$$q_{a,b} = \frac{1}{2} \sum_{i=1}^n (x_{i,a} y_{i,b}- x_{i,b} y_{i,a})  -\frac{1}{2} z_{a} z_{b}\qquad a,b\geq 0.$$ Note that $q_{a,b} = -q_{b,a}$. Let $Q_{a,b}$ be commuting indeterminates satisfying $Q_{a,b} = -Q_{b,a}$. The kernel $I_n$ of the map $$\mathbb{C}[Q_{a,b}] \ra R,\qquad Q_{a,b} \mapsto q_{a,b}$$ is generated by polynomials $p_I$ of degree $2n+2$ in the variables $Q_{a,b}$ corresponding to a rectangular Young tableau of size $2\times (2n+2)$, filled by entries from a standard sequence $I$ of length $4n+4$ from the set of indices $\{0,1,2,\dots\}$. The entries must strictly increase along rows and weakly increase along columns. \end{thm}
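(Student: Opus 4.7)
Since this is attributed to Sergeev \cite{SI}\cite{SII}, my plan is to follow his strategy, which adapts Weyl's classical proof to the super setting. The two fundamental theorems require separate arguments.

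For the first fundamental theorem (generation by the $q_{a,b}$), I would argue by super Howe duality. Fix $N\geq 0$ and consider the subring $R_N = (\text{Sym}\bigoplus_{k=0}^N U_k)^{Osp(1,2n)}$. Writing $U = \mathbb{C}^{2n|1}$, this is the $Osp(1,2n)$-invariant subring of $\text{Sym}(U \otimes \mathbb{C}^{N+1})$. The super analog of Schur--Weyl duality for $Osp(1,2n)$ asserts that the commutant of $Osp(1,2n)$ on $U^{\otimes d}$ is generated by permutations of the tensor factors together with contractions against the invariant supersymmetric bilinear form on $U$. Polarizing and summing over $d$, it follows that $R_N$ is generated as a $\mathbb{C}$-algebra by the quadratic joint invariants, which are precisely the $q_{a,b}$ with $0\leq a,b\leq N$. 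Passing to the direct limit $N\to\infty$ yields the claim for $R$.

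For the second fundamental theorem, I would analyze the kernel $I_n$ via a character (Hilbert series) comparison. The Howe-dual decomposition of $\text{Sym}(U \otimes \mathbb{C}^{N+1})$ into bi-isotypic components indexed by partitions $\lambda$ imposes a hook-shape constraint on $\lambda$ reflecting the super-dimension $(2n|1)$ of $U$: only those $\lambda$ fitting inside a specific hook appear. Relations in $R$ correspond to the $\lambda$ violating this constraint, and the minimal violating shape is the $2 \times (2n+2)$ rectangle. Fillings of this shape by indices from $\{0,1,2,\dots\}$ with strictly increasing rows and weakly increasing columns produce the generators $p_I$ of $I_n$ after antisymmetrizing the $U$-factors within each column. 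Strict row increase encodes $Q_{a,b} = -Q_{b,a}$; weak column increase encodes Garnir relations among antisymmetrized products.

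The main obstacle is showing that these $2\times(2n+2)$ tableau relations generate the \emph{entire} ideal, rather than merely a subideal. The strategy is a super-straightening (Garnir) argument combined with a Hilbert series check: one shows that every monomial in $\mathbb{C}[Q_{a,b}]$ can be rewritten modulo $(p_I)$ as a linear combination of ``standard tableau monomials'' forming a basis, and then that this basis has the same multigraded dimension as $R$, by computing both sides using super-Weyl character formulas for $Osp(1,2n)$. The classical $Sp(2n)$ Pfaffian straightening works cleanly, but the odd direction in $U = \mathbb{C}^{2n|1}$ introduces additional combinatorial cases---one must track separately when the single odd basis vector $z_k$ participates in a given antisymmetrization, since two such participations produce square-zero terms---and it is the careful handling of these cases that constitutes Sergeev's main innovation over the purely even orthosymplectic case.
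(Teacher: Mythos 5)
The paper does not prove this statement: it is quoted verbatim from the literature as a special case of Sergeev's first and second fundamental theorems for $Osp(m,2n)$ (Theorem 1.3 of \cite{SI} and Theorem 4.5 of \cite{SII}), so there is no in-paper argument to compare yours against. Judged on its own terms, your outline is a reasonable reconstruction of the standard FFT/SFT strategy (tensor-space commutant plus polarization for generation; a straightening algorithm plus a dimension count for the relations), and the broad shape matches how Sergeev and his predecessors actually proceed.

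However, as a proof it is incomplete at exactly the points where all the work lies. You assert without argument that the commutant of $Osp(1,2n)$ on $U^{\otimes d}$ is generated by permutations and contractions; this is the tensor FFT for $\go\gs\gp(1|2n)$ and is itself a nontrivial theorem (it is where the semisimplicity of $\go\gs\gp(1|2n)$-modules enters, and it fails for general $\go\gs\gp(m|2n)$ without modification). For the SFT, you correctly identify the two needed ingredients --- a straightening law reducing every monomial modulo the tableau relations to standard monomials, and a character identity showing the standard monomials are independent in $R$ --- but you carry out neither; in particular you give no argument that the minimal violating shape is the $2\times(2n+2)$ rectangle rather than some other partition excluded by the $(2n|1)$ hook condition, and no justification of the claimed multigraded dimension match. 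Since these steps constitute essentially the entire content of the theorem, the proposal should be regarded as a plan of attack (consistent with the cited sources) rather than a proof; for the purposes of this paper the correct move is simply to cite \cite{SI} and \cite{SII}, as the author does.
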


Under \eqref{griossergeev}, the generators $q_{a,b}$ correspond to strong generators
$$\omega_{a,b} \mapsto \frac{1}{2} \sum_{i=1}^n \big(: \partial^a\beta^{i}\partial^{b} \gamma^{i}:- :\partial^{b} \beta^{i} \partial^a \gamma^{i}: \big) - \frac{1}{2} :\partial^a \phi \partial^{b} \phi:$$ for $\cV_{-n+1/2}$. In this notation, $w^{2m+1} = \omega_{0,2m+1}$. Letting $A_m$ denote the vector space spanned by $\{\omega_{a,b}|\ a+b=m\}$, we have a decomposition
$$A_{2m+1} = \partial (A_{2m}) \oplus \bra w^{2m+1} \ket = \partial^2 (A_{2m-1}) \oplus \bra w^{2m+1}\ket.$$ There is a corresponding strong generating set $\{\Omega_{a,b}\}$ for the vacuum module $\cM^+_{-n+1/2}$ satisfying $\pi_{-n+1/2}(\Omega_{a,b}) = \omega_{a,b}$, and we also denote the span of $\{\Omega_{a,b}|\ a+b=m\}$ by $A_m$. We have the following analogue of Lemma \ref{lemddef}.

\begin{lemma} \label{lemddefsergeev} For each relation $p_I$ as above, there exists a unique element \begin{equation} \label{ddefoddsergeev} P_I  \in (\cM^+_{-n+1/2})_{(4n+4)}\cap \cI_{-n+1/2}\end{equation} of weight $2n+2 +\sum_{a=0}^{4n+3} i_a$, satisfying \begin{equation}\label{uniquedijoddsergeev} \phi_{4n+4}(P_I)= p_I.\end{equation} These elements generate $\cI_{-n+1/2}$ as a vertex algebra ideal.
\end{lemma}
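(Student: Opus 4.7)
The plan is to follow the template established in the proofs of Lemmas \ref{lemddef} and \ref{lemddefodd}, applying the reconstruction Lemma \ref{idealrecon} to the morphism $\pi_{-n+1/2}: \cM^+_{-n+1/2} \to \cV_{-n+1/2}$ in the category $\cR$. First I would check that $\pi_{-n+1/2}$ maps each filtered piece $(\cM^+_{-n+1/2})_{(k)}$ onto $(\cV_{-n+1/2})_{(k)}$; this is immediate because the strong generators $\omega_{a,b}$ of $\cV_{-n+1/2}$ are images of $\Omega_{a,b} \in \cM^+_{-n+1/2}$, both of filtration degree $2$. Under the isomorphisms of \eqref{griossergeev}, the induced map $\text{gr}(\pi_{-n+1/2})$ is precisely the classical quotient $\mathbb{C}[Q_{a,b}] \to R$ sending $Q_{a,b} \mapsto q_{a,b}$, whose kernel $I_n$ is described by Sergeev's Theorem \ref{sergeev}.

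Since $I_n$ is generated by the polynomials $p_I$, each homogeneous of degree $2n+2$ in the variables $Q_{a,b}$ and hence of filtration degree $4n+4$ in $\cM^+_{-n+1/2}$, Lemma \ref{idealrecon} produces vertex operators $P_I \in (\cM^+_{-n+1/2})_{(4n+4)} \cap \cI_{-n+1/2}$ with $\phi_{4n+4}(P_I) = p_I$, and the collection $\{P_I\}$ generates $\cI_{-n+1/2}$ as a vertex algebra ideal. The weight is $2n+2 + \sum_{a=0}^{4n+3} i_a$ because each $\Omega_{a,b}$ has conformal weight $a+b+1$, and $p_I$ is a product of $2n+2$ quadratic factors whose $4n+4$ indices are exactly the entries of the $2 \times (2n+2)$ rectangular Young tableau from Theorem \ref{sergeev}.

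For uniqueness, suppose $P'_I$ satisfies the same conditions. Then $X = P_I - P'_I$ lies in $(\cM^+_{-n+1/2})_{(4n+2)} \cap \cI_{-n+1/2}$. If $X$ were nonzero of degree $d \leq 4n+2$, then $\phi_d(X)$ would be a nonzero homogeneous element of $I_n$ of degree $d$. But by Theorem \ref{sergeev}, every generator of $I_n$ has degree exactly $2n+2$ in the $Q_{a,b}$'s (equivalently, filtration degree $4n+4$), so $I_n$ contains no nonzero homogeneous elements of smaller degree, forcing $X = 0$.

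The only real point requiring attention is verifying the minimal-degree statement for $I_n$ directly from Sergeev's description of the defining relations; once that is confirmed, the rest of the argument is a formal transcription of the symplectic and orthogonal cases and presents no new difficulty.
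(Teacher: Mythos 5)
Your proposal is correct and follows exactly the template the paper intends: the paper states this lemma as an analogue of Lemma \ref{lemddef} without repeating the argument, and your proof is a faithful transcription of that argument (surjectivity of $\pi_{-n+1/2}$ on filtered pieces, Lemma \ref{idealrecon} applied with Sergeev's generators of $I_n$ in place of the Pfaffians, and uniqueness from the absence of relations of filtration degree below $4n+4$). The weight count via $\mathrm{wt}(\Omega_{a,b})=a+b+1$ and the minimal-degree observation for $I_n$ are both correct, so nothing further is needed.
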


Each $P_I$ can be written in the form 
\begin{equation}\label{decompsergeev} P_I = \sum_{k=1}^{2n+2} P^{2k}_I,\end{equation} where $P^{2k}_I$ is a normally ordered polynomial of degree $k$ in the elements $\Omega_{a,b}$. The term $P^2_I$ lies in $A_m$ for $m =2 n+1 + \sum_{a=0}^{4n+3} i_a$. For odd $m\geq 1$ we have the projection $$\text{pr}_{m}: A_{m}\ra \bra W^{m}\ket.$$ For all $I$ such that  $m = 2n+1+ \sum_{a=0}^{4n+3} i_a$ is odd, define the remainder \begin{equation}\label{defofrijodd} R_I = \text{pr}_m(P^2_I),\end{equation} which is independent of the choice of decomposition \eqref{decompsergeev}.

The relation of minimal weight corresponds to the $2\times (2n+2)$ tableau where both rows are labelled by $(0,1,\dots,2n+1)$, and therefore has weight $4n^2+8n+4$. We denote this relation by $P_0$ and we denote its remainder by $R_0$. The condition $R_0 \neq 0$ is equivalent to the existence of a decoupling relation
\begin{equation} \label{decouposp} w^{4n^2+8n+3} = Q(w^1, w^3, \dots, w^{4n^2+8n+1}),\end{equation} where $Q$ is a normally ordered polynomial in $w^1, w^3,  \dots, w^{4n^2+8n+1}$, and their derivatives. As in Lemma \ref{lemnonzeroii}, by applying $w^3 \circ_1$ repeatedly to this relation, we obtain relations $$w^{2m+1} = Q_m(w^1, w^3, \dots, w^{4n^2+8n+1})$$ for all $m>2n^2+4n+1$. 

\begin{conj} \label{ospconj} For all $n\geq 1$, $R_0 \neq 0$. Equivalently, $\cV_{-n+1/2}$ has a minimal strong generating set $\{w^1, w^3, \dots, w^{4n^2+8n+1}\}$, and is therefore of type $\cW(2,4,\dots, 4n^2+8n+2)$. \end{conj}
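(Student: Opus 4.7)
The plan is to set up the analogue of the analysis in Sections \ref{recursionanalysis} and \ref{recursionanalysisodd}, producing a recursion for $R_n(I)$ in terms of remainders associated to smaller tableaux, and then to attempt either a closed formula or a sign-based nonvanishing argument. First, I would establish the analogue of Lemma \ref{corhomo}/\ref{corhomodd}: for each Sergeev tableau $I$, there is a decomposition $P_I = \sum_{k=1}^{2n+2} P^{2k}_I$ such that the corresponding vertex operator $\tilde P_I \in \cM^+_{-n-1/2}$, obtained by replacing each $\Omega_{a,b}$ by its counterpart in $\cM^+_{-n-1/2}$, projects into the homogeneous degree $4n+4$ piece of $\cV_{-n-1/2}$. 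The proof mimics the argument of \cite{LI} once one checks that Sergeev's relations are stable under increasing $n$, in the sense that a tableau of size $2\times (2n+2)$ defines a relation in $R$ for all sufficiently large rank.

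Next I would expand $p_I$ by the Laplace-type row expansion implicit in Sergeev's construction: writing the two-row tableau relation as a sum $\sum q_{i_r, j_s}\,p_{I_{r,s}}$ where $I_{r,s}$ is the smaller $2\times (2n)$ tableau obtained by deleting two entries, with signs dictated by the standardization rules of \cite{SI}\cite{SII}. Substituting $\tilde P_{I_{r,s}} \in \cM^+_{-n+1/2}$ and forming the normally ordered product $:\Omega_{i_r,j_s}\tilde P_{I_{r,s}}:$, the degree $4n+4$ component cancels by the classical relation, and the degree $4n+2$ component from the double contractions of $\Omega_{i_r,j_s}$ with the entries of $\tilde P_{I_{r,s}}$ yields, after projection, a sum over modified tableaux analogous to equation \eqref{recursion}. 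The key point is that the OPE $\Omega_{a,b}(z)\Omega_{c,d}(w)$ in $\cS(n)\otimes \cF(1)$ has both a symplectic contribution of total sign $(-1)^{b+1}$ and a fermionic contribution of sign $(-1)^a$, so the coefficient of $\tilde P_{I_{r,s}^{k,l}}$ in the recursion will be a linear combination of terms $\frac{1}{1 + i_k + i_r + j_s + j_l}$-type denominators with mixed signs coming from the two summands.

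With this recursion in hand, the strategy splits. The first approach is to mimic Theorem \ref{closedform}: use the permutation symmetries of Sergeev tableaux (alternation within each row, weak symmetry under swapping the two rows) together with the pole structure of the recursion to force the shape of a rational closed formula for $R_n(I)$, then identify the constants by induction and by specializing to $I = I_x$ as in Lemma \ref{indy}. Because the Sergeev tableaux are indexed by standard fillings rather than by strictly increasing sequences, the relevant denominator will involve products of $1 + i_{k} + j_{l}$ factors as well as cross-row factors, and the numerator must absorb the alternating symmetries in each row. A constant-term / limit analysis $x\to\infty$ analogous to Lemma \ref{indy} would reduce the full formula to the known case. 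The second approach, if a clean closed formula proves elusive, is to fix $I = I_0 = (0,1,\dots, 2n+1)$ on both rows, specialize the recursion, and track the fermionic sign carefully to show that no total cancellation occurs; the $n=1$ computer verification then serves as base case, and one proceeds by induction on $n$.

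The main obstacle, as the author indicates, is the cancellation between bosonic and fermionic contributions. In the $\cV_{n/2}$ case everything was positive and in the $\cV_{-n}$ case the Pfaffian symmetries rigidly determined the answer; here the Sergeev relations are genuinely mixed, and one can expect large cancellations in $R_n(I)$ that make both a closed formula and an inductive positivity argument delicate. A promising softening of this obstacle is to work modulo a prime or to use the free field realization directly: since $\cV_{-n+1/2}\cong (\cS(n)\otimes \cF(1))^{Osp(1,2n)}$ and since $\cF(1)$ is Virasoro of central charge $1/2$, one may hope to relate $R_0$ for $\cV_{-n+1/2}$ to the already-computed $R_0$ for $\cV_{-n}$ by a deformation argument in the parameter encoding the fermionic contribution, reducing nonvanishing for all $n$ to a single transversality check, which is exactly what the $n=1$ computer calculation confirms in the first nontrivial instance.
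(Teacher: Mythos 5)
The statement you are addressing is stated in the paper as a \emph{conjecture}: the author explicitly says that a recursive formula for $R_I$ analogous to \eqref{recursion} and \eqref{recursionodd} is ``currently out of reach,'' and the only case actually proved is $n=1$, by the explicit computer calculation recorded in the Appendix. Your proposal is a research plan rather than a proof, and it stalls at exactly the point the paper identifies as the obstruction: you assert that a recursion for $R_n(I)$ can be derived and then that ``the strategy splits'' into a closed-formula approach or a sign-tracking induction, but neither branch is carried out, and the final ``deformation in the fermionic parameter'' suggestion is not made precise enough to reduce the general case to a transversality check. Nothing in the proposal establishes $R_0\neq 0$ for any $n\geq 2$.

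There is also a concrete structural error in the step you do spell out. You propose a Laplace-type expansion $p_I=\sum q_{i_r,j_s}\,p_{I_{r,s}}$ in which $I_{r,s}$ is a $2\times(2n)$ tableau ``obtained by deleting two entries.'' But the Sergeev relation $p_I$ for $Osp(1,2n)$ has degree $2n+2$ in the $q_{a,b}$ and is indexed by a $2\times(2n+2)$ tableau with $4n+4$ entries; deleting the two entries of a single quadratic factor leaves $4n+2$ entries and a degree-$(2n+1)$ cofactor, which is neither a $2\times(2n)$ shape nor a relation for $Osp(1,2(n-1))$ (those have degree $2n$ and $4n$ entries). In the symplectic and orthogonal cases the rank drops by one when the relation degree drops by one, which is what makes \eqref{pfaffinduction} and \eqref{determinantinduction} compatible with the one-quadratic extraction underlying \eqref{recursion} and \eqref{recursionodd}; here the relation degree jumps by two between consecutive ranks, so a single-quadratic extraction cannot land on lower-rank Sergeev relations, and the entire double-contraction bookkeeping that produces the recursion would have to be redone for a two-quadratic (degree-four) extraction. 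This mismatch is plausibly a large part of why the recursion is out of reach, and your proposal does not address it.
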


We are unable to prove this conjecture because a recursive formula for $R_I$ is currently out of reach, but for $n=1$ we can prove it by computer calculation. In this case, the relation $p_0$ of minimal weight $16$ is given by 
\begin{equation} \label{defofpo} q_{0,1}^2  q_{2,3}^2 + q_{0,2}^2  q_{1,3}^2+  q_{0,3}^2  q_{1,2}^2  - 2  q_{0,2}  q_{0,3}  q_{1,2}   q_{1,3}  + 2  q_{0,1}  q_{0,3}  q_{1,2}   q_{2,3} - 2  q_{0,1}   q_{0,2}  q_{1,3}   q_{2,3}. \end{equation}
In the Appendix, we will write down the corresponding relation $P_0$ explicitly, and we will see that $R_0 =  \frac{109}{56000} W^{15}$. This implies

\begin{thm} $\cV_{-1/2}$ has a minimal strong generating set $\{w^1, w^3, \dots, w^{13}\}$, and in particular is of type $\cW(2,4,\dots, 14)$. \end{thm}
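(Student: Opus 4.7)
The plan is to verify Conjecture \ref{ospconj} in the case $n=1$ by an explicit computation of the quantum correction to the classical relation \eqref{defofpo}, showing that the remainder $R_0$ is nonzero. Once this is established, the argument in the paragraph preceding Conjecture \ref{ospconj} supplies the decoupling relation $w^{15} = Q(w^1, w^3, \dots, w^{13})$ of weight $16$, and repeated application of $w^3\circ_1$ in the style of Lemma \ref{lemnonzeroii} produces relations $w^{2m+1} = Q_m(w^1, w^3, \dots, w^{13})$ for every $m \geq 7$. Hence $\{w^1, w^3, \dots, w^{13}\}$ strongly generates $\cV_{-1/2}$. Minimality follows from Lemma \ref{lemddefsergeev}: $\cI_{-1/2}$ contains no element of weight less than $16$, so no $w^{2m+1}$ with $0 \leq m \leq 6$ can be expressed in terms of the remaining generators and their derivatives.

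To construct $P_0$, first lift $p_0$ from \eqref{defofpo} to a normally ordered quartic $P_0^8 \in (\cM^+_{-1/2})_{(8)}$ by replacing each $q_{a,b}$ with $\Omega_{a,b}$ under some fixed bracketing of the iterated Wick products. Since $p_0$ vanishes in the invariant ring, $\pi_{-1/2}(P_0^8)$ lies in strictly lower filtered degree, so its symbol is a cubic in the $q_{a,b}$; lift this cubic to a normally ordered $-P_0^6$ in the $\Omega_{a,b}$'s. Iterating twice more yields $P_0 = P_0^8 + P_0^6 + P_0^4 + P_0^2 \in \cI_{-1/2}$ with $P_0^{2k}$ homogeneous of degree $k$ in the $\Omega_{a,b}$, which is the $Osp$-analogue of the decomposition \eqref{decompofd}. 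The remainder $R_0 = \text{pr}_{15}(P_0^2) \in \bra W^{15}\ket$ is independent of all normal-ordering choices by the argument of Lemma \ref{uniquenessofr}, since $\cM^+_{-1/2}$ is a vertex subalgebra of a larger free field algebra whose generators are linear combinations of the $W^m$'s and their derivatives.

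The final step is a symbolic computation of $R_0$ in $\cS(1)\otimes\cF(1)$ via the free field realization \eqref{bgrealization}. The main obstacle is purely computational rather than conceptual: the quartic $P_0^8$ involves the six generators $\Omega_{a,b}$ with $0\leq a<b\leq 3$ (each of weight $a+b+1$), and pushing it through $\pi_{-1/2}$ requires expanding many fourfold Wick products in $\beta, \gamma, \phi$ of weight $16$, normal-ordering them against the standard basis \eqref{basisofs}, and then rewriting the degree-$2$ residue in the basis $\{\partial^{2k}W^{15-2k}\mid 0\leq k\leq 7\}$ of $A_{15}$ via the linear change of variables \eqref{lincomb}. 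This bookkeeping is well within reach of a computer algebra system with an OPE package, and the calculation recorded in the Appendix yields $R_0 = \frac{109}{56000}W^{15}\neq 0$, completing the proof.
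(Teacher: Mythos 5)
Your proposal is correct and follows essentially the same route as the paper: reduce the theorem to verifying $R_0\neq 0$ for $n=1$, build $P_0=\sum_{k=1}^{4}P_0^{2k}$ by the iterative normal-ordering/correction procedure, and confirm by a symbolic OPE computation (the paper uses Thielemans' Mathematica package, with the explicit $P_0^{2k}$ recorded in the Appendix) that the sum lies in $\ker\pi_{-1/2}$ with $R_0=\frac{109}{56000}W^{15}$. The surrounding logic — existence of the weight-$16$ decoupling relation, propagation via $w^3\circ_1$, and minimality from the absence of relations below weight $16$ — matches the paper's argument.
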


\section{Representation theory of $\cV_{-n}$ and $\cV_{n/2}$}
The basic tool in the representation theory of vertex algebras is the {\it Zhu functor} \cite{Z}. Given a vertex algebra $\cW$ with weight grading $\cW = \bigoplus_{n\in\mathbb{Z}} \cW_n$, this functor attaches to $\cW$ an associative algebra $A(\cW)$, together with a surjective linear map $\pi_{\text{Zhu}}:\cW\ra A(\cW)$. For $a\in \cW_{m}$ and $b\in\cW$, define
\begin{equation}\label{defzhu} a*b = \text{Res}_z \bigg (a(z) \frac{(z+1)^{m}}{z}b\bigg),\end{equation} and extend $*$ by linearity to a bilinear operation $\cW\otimes \cW\ra \cW$. Let $O(\cW)$ denote the subspace of $\cW$ spanned by elements of the form \begin{equation}\label{zhuideal} a\circ b = \text{Res}_z \bigg (a(z) \frac{(z+1)^{m}}{z^2}b\bigg)\end{equation} where $a\in \cW_m$, and let $A(\cW)$ be the quotient $\cW/O(\cW)$, with projection $\pi_{\text{Zhu}}:\cW\ra A(\cW)$. Then $O(\cW)$ is a two-sided ideal in $\cW$ under the product $*$, and $(A(\cW),*)$ is a unital, associative algebra. The assignment $\cW\mapsto A(\cW)$ is functorial, and if $\cI \subset \cW$ is a vertex algebra ideal, $A(\cW/\cI)\cong A(\cW)/ I$, where $I = \pi_{\text{Zhu}}(\cI)$.

If $\cW$ is strongly generated by homogeneous elements $\{\alpha_i|\ i\in I\}$, $A(\cW)$ is generated by $\{ a_i = \pi_{\text{Zhu}}(\alpha_i)|\ i\in I\}$. A $\mathbb{Z}_{\geq 0}$-graded $\cW$-module $M = \bigoplus_{n\geq 0} M_n$ is called {\it admissible} if for every $a\in\cW_m$, $a(n) M_k \subset M_{m+k -n-1}$, for all $n\in\mathbb{Z}$. Given $a\in\cW_m$, $a(m-1)$ acts on each $M_k$. The subspace $M_0$ is then an $A(\cW)$-module with action $\pi_{\text{Zhu}}(a)\mapsto a(m-1) \in \text{End}(M_0)$. In fact, $M\mapsto M_0$ provides a one-to-one correspondence between irreducible, admissible $\cW$-modules and irreducible $A(\cW)$-modules. If $A(\cW)$ is commutative, all its irreducible modules are one-dimensional, and the corresponding $\cW$-modules $M = \bigoplus_{n\geq 0} M_n$ are cyclic and generated by any nonzero $v\in M_0$. Accordingly, we call such a module a {\it highest-weight module} for $\cW$, and we call $v$ a {\it highest-weight vector}.

\begin{thm} \label{abelianzhu} For all $c\in \mathbb{C}$, $A(\cV_{c})$ is commutative, so all irreducible, admissible $\cV_{c}$-modules are highest-weight modules.\end{thm}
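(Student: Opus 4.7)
The plan is to reduce to showing the larger algebra $A(\cM^+_c)$ is commutative; since $A(\cV_c)$ is a quotient by functoriality of the Zhu construction, this will suffice. Because $\cM^+_c$ is strongly generated by $\{W^{2m+1}: m\ge 0\}$ and the Zhu relation $[\partial a] = -h_a[a]$ collapses derivatives, $A(\cM^+_c)$ is generated as an associative algebra by $[W^{2m+1}] := \pi_{\text{Zhu}}(W^{2m+1})$, so it is enough to show that any two such generators commute.

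Via \eqref{zhucomm} with $h_{W^{2m+1}} = 2m+2$,
\[
[[W^{2m+1}], [W^{2k+1}]] \;=\; \sum_{j \geq 0} \binom{2m+1}{j}\,[W^{2m+1} \circ_j W^{2k+1}].
\]
For $j\ge 0$ the positive mode $W^{2m+1}(j)$ annihilates the vacuum, so $W^{2m+1}\circ_j W^{2k+1}$ is the Lie bracket $[W^{2m+1}(j),W^{2k+1}(-1)]$ in $\hat{\cD}^+$ applied to the vacuum. Since $\hat{\cD}^+$ is a Lie subalgebra of $\hat{\cD}$ with spanning set $\{W^{2r+1}_s\}\cup\{\kappa\}$, this bracket is a finite linear combination of single $W^{2r+1}_s$'s plus a scalar multiple of $\kappa$; no normally ordered products of two or more generators can appear. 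Applied to the vacuum, each $W^{2r+1}_s$ contributes either zero or, up to a combinatorial factor, a derivative $\partial^p W^{2r+1}$ of a single generator, and $\kappa$ contributes the scalar $c$. Collapsing derivatives inside $A(\cM^+_c)$ yields
\[
[[W^{2m+1}], [W^{2k+1}]] \;=\; \sum_r C_r\,[W^{2r+1}] + D
\]
for constants $C_r, D\in\mathbb{C}$ depending on $m, k, c$, the sum being finite.

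To force every $C_r$ and $D$ to vanish I exploit representation theory. For each $\lambda = (\lambda_0, \lambda_1,\ldots)\in\mathbb{C}^{\infty}$, the Verma module over $\hat{\cD}^+$ induced from the one-dimensional character $\kappa\mapsto c$, $W^{2m+1}_0\mapsto \lambda_m$, $\hat{\cD}^+_{>0}\mapsto 0$ is an admissible $\cM^+_c$-module whose top graded piece is one-dimensional and on which $A(\cM^+_c)$ acts via a ring homomorphism $\chi_\lambda : A(\cM^+_c) \to \mathbb{C}$ sending $[W^{2m+1}]\mapsto \lambda_m$. Since $\mathbb{C}$ is commutative, $\chi_\lambda$ annihilates commutators, and applying it to the displayed identity gives $\sum_r C_r\lambda_r + D = 0$ for every $\lambda\in\mathbb{C}^\infty$. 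Because this is a linear identity in the $\lambda_r$ that holds on all of $\mathbb{C}^\infty$, it forces every $C_r = 0$ and $D = 0$. Hence $A(\cM^+_c)$, and therefore its quotient $A(\cV_c)$, is commutative. The final assertion is then immediate: any irreducible module over a commutative algebra is one-dimensional, and Zhu's correspondence translates this into the statement that every irreducible admissible $\cV_c$-module is a highest-weight module.

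The step that will require the most care is the translation between the vertex-algebraic circle products and the Lie-algebraic brackets in $\hat{\cD}^+$, together with the resulting claim that after passage to the vacuum and collapse of derivatives the right-hand side is a genuinely \emph{linear} expression in the $[W^{2r+1}]$'s rather than a polynomial of higher degree. It is precisely this linearity that makes the evaluation-on-characters argument effective, and it rests on the structural input that the Lie bracket in $\hat{\cD}^+$ never produces a product of two generators but only a single mode $W^{2r+1}_s$ plus a central contribution.
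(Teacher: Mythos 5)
Your proposal is correct, but it takes a genuinely different route from the paper. The paper's proof is a two-line reduction: since $A(\cV_c)$ is a quotient of $A(\cM^+_c)$ by functoriality, it suffices to treat $A(\cM^+_c)$, and commutativity there is deduced from the known commutativity of $A(\cM_c)$ \cite{FKRW}, using that the generators $W^{2m+1}$ of $\cM^+_c$ are linear combinations of the generators of $\cM_c$ and their derivatives. You perform the same first reduction but then argue self-containedly: the Lie-algebra structure of $\hat{\cD}^+$ (brackets of modes produce single modes plus a central term, never quadratic expressions) forces the Zhu commutator $[W^{2m+1}]*[W^{2k+1}]-[W^{2k+1}]*[W^{2m+1}]$ to be an affine-linear expression $\sum_r C_r[W^{2r+1}]+D$ after collapsing derivatives via $[\partial a]=-\mathrm{wt}(a)[a]$; then the characters $\chi_\lambda$ of $A(\cM^+_c)$ coming from the top levels of the $\hat{\cD}^+$-Verma modules (which exist for arbitrary $\lambda\in\mathbb{C}^\infty$ because $\hat{\cD}^+_0$ is abelian and the cocycle vanishes in weight zero) separate such expressions and force $C_r=D=0$. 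Both arguments are valid. What you lose is brevity; what you gain is independence from the external computation in \cite{FKRW}, and your argument sidesteps a point the paper leaves implicit, namely that the induced map $A(\cM^+_c)\to A(\cM_c)$ need not be injective a priori, so commutativity of the target does not formally transfer without the kind of explicit analysis you carry out. As a bonus, your character-separation argument also shows the $[W^{2m+1}]$ are algebraically independent, recovering the identification $A(\cM^+_c)\cong\mathbb{C}[A^1,A^3,\dots]$ that the paper asserts separately. The only step you should make explicit in a final write-up is the standard fact that a restricted $\hat{\cD}^+$-module of central charge $c$ (such as the Verma module) is automatically an admissible module over the vacuum vertex algebra $\cM^+_c$; this is used throughout the paper and in \cite{KWY}, so it is safe, but it is the load-bearing input for the existence of the characters $\chi_\lambda$.
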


\begin{proof} Since $A(\cV_{c})$ is a homomorphic image of $A(\cM^+_c)$, it suffices to show that $A(\cM^+_c)$ is commutative. But this is clear since the generators of $\cM^+_c$ are linear combinations of the generators of $\cM_c$, and $A(\cM_c)$ is known to be commutative \cite{FKRW}. \end{proof}

Since $\cM^+_{c}$ is freely generated by $W^1,W^3, \dots$ it follows that $A(\cM^+_{c})$ is the polynomial algebra $\mathbb{C}[A^1,A^3,\dots]$, where $A^{2m+1} = \pi_{\text{Zhu}}(W^{2m+1})$. Moreover, $$A(\cV_{c}) \cong  \mathbb{C}[a^1,a^3,\dots] / I_{c},$$ where $a^{2m+1} = \pi_{\text{Zhu}}(w^{2m+1})$ and $I_{c} = \pi_{\text{Zhu}}(\cI_{c})$. We have a commutative diagram
\begin{equation}\label{commdiag} \begin{array}[c]{ccc}
\cM^+_{c} &\stackrel{\pi_{c}}{\rightarrow}& \cV_{c}  \\
\downarrow\scriptstyle{\pi_{\text{Zhu}}}&&\downarrow\scriptstyle{\pi_{\text{Zhu}}}\\
A(\cM^+_{c}) &\stackrel{A(\pi_{c})}{\rightarrow}& A(\cV_{c})
\end{array} .\end{equation} 
Since $A(\cV_{-n})$ is generated by $a^1, a^3,\dots,a^{2n^2+4n-1}$, $$A(\cV_{-n}) \cong \mathbb{C}[a^1,a^3,\dots, a^{2n^2+4n-1}]/ I_{-n},$$ where $I_{-n}$ is now regarded as an ideal inside $\mathbb{C}[a^1,a^3,\dots, a^{2n^2+4n-1}]$. The corresponding variety $V(I_{-n})$, which parametrizes the irreducible, admissible $\cV_{-n}$-modules, is a proper, closed subvariety of $\mathbb{C}^{n^2+2n}$. Similarly, $$A(\cV_{n/2}) \cong \mathbb{C}[a^1,a^3,\dots, a^{2n-1}]/ I_{n/2},$$ where $I_{n/2}$ is an ideal inside $\mathbb{C}[a^1,a^3,\dots, a^{2n-1}]$, and the variety $V(I_{n/2})$ parametrizing the irreducible, admissible $\cV_{n/2}$-modules is a proper, closed subvariety of $\mathbb{C}^{n}$. The proof that these varieties are proper is analogous to the proof of Theorem 5.1 of \cite{LI} and is omitted. It amounts to constructing nontrivial relations among the generators $A(\cV_{-n})$ and $A(\cV_{n/2})$, which come from normally ordered relations in $\cV_{-n}$ and $\cV_{n/2}$, respectively. In particular, neither $\cV_{-n}$ nor $\cV_{n/2}$ is {\it freely} generated by the elements given by Theorems \ref{descriptionofvn} and \ref{descriptionofvnodd}.

\section{The Hilbert problem for $\cS(n)$ and $\cF(n)$}\label{secgeneral}
For a simple, strongly finitely generated vertex algebra $\cA$ and a reductive group $G\subset \text{Aut}(\cA)$, the {\it Hilbert problem} asks whether $\cA^G$ is strongly finitely generated. Using our results on the structure of $\cV_{-n}$ and $\cV_{n/2}$ we give a complete solution to this problem for any $G$ when $\cA$ is either the $\beta\gamma$-system $\cS(n)$ or the free fermion algebra $\cF(n)$. This generalizes our earlier study \cite{LII} in which we established the strong finite generation of $\cS(n)^G$ and $\cF(2n)^G$ for $G\subset \text{GL}(n)$.

By Theorem 13.2 and Corollary 14.2 of \cite{KWY}, $\cS(n)$ has a decomposition \begin{equation}\label{dlmdecomp} \cS(n) \cong \bigoplus_{\nu\in H} L(\nu)\otimes M^{\nu},\end{equation} where $H$ indexes the irreducible, finite-dimensional representations $L(\nu)$ of $\text{Sp}(2n)$, and the $M^{\nu}$'s are inequivalent, irreducible, highest-weight $\cV_{-n}$-modules. The modules $M^{\nu}$ above have an integrality property; the eigenvalues of $$\{w^{2m+1}(2m+1)|\ m\geq 0\}$$ on the highest-weight vectors $f_{\nu}$ are all integers. These modules therefore correspond to certain rational points on the variety $V(I_{-n})$. Recall that $\cS(n)\cong \text{gr}(\cS(n))$ as $\text{Sp}(2n)$-modules, and $$\text{gr}(\cS(n)^G )\cong (\text{gr}(\cS(n))^G \cong (\text{Sym} \bigoplus_{k\geq 0} U_k)^G = R$$ as commutative algebras, where $U_k\cong \mathbb{C}^{2n}$. For all $p\geq 1$, $\text{GL}(p)$ acts on $\bigoplus_{k =0}^{p-1} U_k $ and commutes with the action of $G$. There is an induced action of $\text{GL}(\infty) = \lim_{p\ra \infty} \text{GL}(p)$ on $\bigoplus_{k\geq 0} U_k$ which commutes with $G$, so $\text{GL}(\infty)$ acts on $R$. Elements $\sigma \in \text{GL}(\infty)$ are known as {\it polarization operators}, and given $f\in R$, $\sigma f$ is known as a polarization of $f$. The following fundamental result of Weyl appears as Theorem 2.5A of \cite{We}.
\begin{thm} \label{weylfinite} $R$ is generated by the polarizations of any set of generators for $(\text{Sym} \bigoplus_{k = 0} ^{2n-1} U_k)^G$. Since $G$ is reductive, $(\text{Sym} \bigoplus_{k = 0} ^{2n-1} U_k)^G$ is finitely generated, so there exists a finite set $\{f_1,\dots, f_r\}$, whose polarizations generate $R$. \end{thm}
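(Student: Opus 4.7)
The statement has two parts: the existence of a finite set $\{f_1,\dots,f_r\}$ generating $(\text{Sym}\bigoplus_{k=0}^{2n-1} U_k)^G$, and the assertion that polarizations of these elements already generate all of $R$. The first is immediate from Hilbert's finiteness theorem for reductive group actions on a finite-dimensional vector space, so the substantive content is the polarization statement, which is the content of Weyl's classical theorem. I would organize the proof around Howe/Cauchy duality.

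Multi-grade $R = \bigoplus_{\mathbf{d}} R_{\mathbf{d}}$ by the multi-degree $\mathbf{d}=(d_0,d_1,\ldots)$ with finite support. Given $p$ large enough that $\mathbf{d}$ is supported in $\{0,\ldots,p-1\}$, identify $\bigoplus_{k=0}^{p-1}U_k = U\otimes\mathbb{C}^p$ where $U\cong\mathbb{C}^{2n}$, so $G$ acts on $U$ and $GL(p)$ acts on $\mathbb{C}^p$, the two actions commuting. Then $GL(p)$ preserves $(\text{Sym}(U\otimes\mathbb{C}^p))^G$ and the polarization operators generate exactly this $GL(p)$-action; the space $R_{\mathbf{d}}$ is the $(d_0,\ldots,d_{p-1})$-weight subspace.

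The key input is the Cauchy decomposition
\begin{equation*}
\text{Sym}(U\otimes\mathbb{C}^p) \cong \bigoplus_{\ell(\lambda)\leq\min(2n,p)} S^\lambda(U)\otimes S^\lambda(\mathbb{C}^p),
\end{equation*}
which, after taking $G$-invariants, gives $(\text{Sym}(U\otimes\mathbb{C}^p))^G = \bigoplus_\lambda (S^\lambda(U))^G\otimes S^\lambda(\mathbb{C}^p)$ with $\ell(\lambda)\leq 2n$. For any such $\lambda$, the highest weight vector of $S^\lambda(\mathbb{C}^p)$ lies in the subspace $S^\lambda(\mathbb{C}^{2n})\subset S^\lambda(\mathbb{C}^p)$ induced by the coordinate embedding $\mathbb{C}^{2n}\hookrightarrow\mathbb{C}^p$, and generates $S^\lambda(\mathbb{C}^p)$ as a $GL(p)$-module. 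Consequently the $GL(p)$-orbit of $(\text{Sym}\bigoplus_{k=0}^{2n-1} U_k)^G$ spans $(\text{Sym}\bigoplus_{k=0}^{p-1} U_k)^G$ as a vector space. Since polarization operators act by ring automorphisms, polarizations of any algebra generating set $\{f_1,\ldots,f_r\}$ of the smaller invariant ring generate the larger one as an algebra, and letting $p\to\infty$ yields $R$.

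The main obstacle is verifying the Cauchy decomposition for $G$ a general reductive subgroup of $GL(U)$. The decomposition is naturally a $GL(U)\times GL(p)$-bimodule statement, and restricting the first factor to $G$ works without change, but one must carefully track the $GL(p)$-module structure on $G$-isotypic components. Alternatively, one can bypass Howe duality and follow Weyl's original elementary proof via the polarization/restitution calculus applied to multilinear invariants, which is longer but self-contained; this is the approach used in Theorem 2.5A of \cite{We}, which the statement cites directly.
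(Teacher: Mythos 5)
Your argument is essentially correct, but there is nothing in the paper to compare it against: the paper does not prove this statement at all, it simply quotes it as Theorem 2.5A of \cite{We} and uses it as a black box. Weyl's own proof proceeds by full polarization down to multilinear invariants followed by restitution; your route through the Cauchy decomposition $\text{Sym}(U\otimes\mathbb{C}^p)\cong\bigoplus_{\ell(\lambda)\leq\min(2n,p)}S^{\lambda}(U)\otimes S^{\lambda}(\mathbb{C}^p)$ is a clean modern substitute. Since $G$ acts only on the first tensor factor, taking invariants gives $\bigoplus_{\lambda}(S^{\lambda}(U))^G\otimes S^{\lambda}(\mathbb{C}^p)$ with the constraint $\ell(\lambda)\leq 2n$ coming for free from $\dim U=2n$; each $S^{\lambda}(\mathbb{C}^p)$ is irreducible over $GL(p)$ and meets the image of $S^{\lambda}(\mathbb{C}^{2n})$ nontrivially, so the $GL(p)$-orbit of the $2n$-variable invariants spans the $p$-variable invariants, and the upgrade from linear spanning to algebra generation via the fact that $GL(p)$ acts by ring automorphisms commuting with $G$ is exactly right. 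The ``obstacle'' you flag at the end (tracking the $GL(p)$-structure on $G$-isotypic components) is not actually an issue, precisely because the $G$-invariants functor commutes with the $\lambda$-decomposition, $G$ acting trivially on the $\mathbb{C}^p$ factor. What each approach buys: yours isolates exactly where the bound $2n$ enters (the maximal number of rows of a partition $\lambda$ with $S^{\lambda}(U)\neq 0$) and works uniformly for any subgroup $G\subset GL(U)$, with reductivity needed only for the finite generation of the $2n$-variable invariant ring; Weyl's original argument is longer but elementary, avoiding the representation theory of $GL(p)$, and is the form in which the paper cites the result.
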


\begin{lemma} \label{ordfg} $\cS(n)^G$ has a strong generating set which lies in the direct sum of finitely many irreducible $\cV_{-n}$-modules.
\end{lemma}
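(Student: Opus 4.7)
The plan is to combine Weyl's Theorem \ref{weylfinite} with the reconstruction Lemma \ref{reconlem} and the decomposition \eqref{dlmdecomp}. By Theorem \ref{weylfinite}, there exist $G$-invariants $f_1,\dots,f_r \in R$ of total polynomial degrees $d_1,\dots,d_r$ whose $GL(\infty)$-polarizations generate $R = \text{gr}(\cS(n))^G$ as a commutative ring, hence also as a $\partial$-ring.

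The key observation is that every polarization $\sigma f_i$ (for $\sigma \in GL(\infty)$) remains inside $\text{Sym}^{d_i}\bigoplus_{k\geq 0} U_k$, since polarization operators preserve total polynomial degree. The actions of $Sp(2n)$ and $GL(\infty)$ on $\bigoplus_k U_k$ commute---$Sp(2n)$ acts standardly on each copy $U_k\cong\mathbb{C}^{2n}$, while $GL(\infty)$ permutes and mixes the copies---and the $Sp(2n)$-module $\text{Sym}^{d_i}\bigoplus_{k\geq 0} U_k$ contains only finitely many isotypic types $L(\nu)$, namely those indexed by partitions of $d_i$ that appear in a $d_i$-th symmetric power of the standard $Sp(2n)$-module. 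Consequently, the polarizations of $f_1,\dots,f_r$ jointly span a subspace of $R$ confined to the isotypic components indexed by some finite set $H'\subset H$.

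To conclude, I would lift each polarization $\sigma f_i$ to a vertex operator $F_{i,\sigma}\in (\cS(n)^G)_{(d_i)}$, using the $Sp(2n)$-equivariant linear isomorphism $\cS(n)\cong \text{gr}(\cS(n))$ from \eqref{linassgrad} to arrange that $F_{i,\sigma}$ lies in the same $L(\nu)$-isotypic component of $\cS(n)^G$ as $\sigma f_i$ does in $R$. Lemma \ref{reconlem} then guarantees that the (infinite) family $\{F_{i,\sigma}\}$ strongly generates $\cS(n)^G$ as a vertex algebra. Restricting \eqref{dlmdecomp} to $G$-invariants yields $\cS(n)^G \cong \bigoplus_{\nu\in H} L(\nu)^G \otimes M^\nu$, and by construction this strong generating set lies entirely in the finite sum $\bigoplus_{\nu \in H'} L(\nu)^G\otimes M^\nu$, which proves the lemma. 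The only nonobvious ingredient is the finiteness of $H'$, which reduces to the elementary fact that each fixed-degree symmetric power of $\bigoplus_k U_k$ decomposes into only finitely many $Sp(2n)$-isotypes.
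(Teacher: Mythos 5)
Your proposal is correct and follows essentially the same route as the paper's proof: Weyl's polarization theorem (Theorem \ref{weylfinite}), a lift through the degree-preserving $Sp(2n)$-equivariant linear isomorphism $\cS(n)\cong\text{gr}(\cS(n))$, the reconstruction Lemma \ref{reconlem}, and the decomposition \eqref{dlmdecomp}. The only (harmless) divergence is in how finiteness is secured: you count the finitely many $Sp(2n)$-isotypes occurring in each fixed symmetric power $\text{Sym}^{d_i}\bigoplus_{k\geq 0}U_k$, whereas the paper simply notes that the finitely many $f_i(z)$ lie in finitely many summands $L(\nu_j)\otimes M^{\nu_j}$ and that, since the $GL(\infty)$-action commutes with $Sp(2n)$, each family of polarizations $(\sigma f_i)(z)$ remains inside the single trivial $G$-isotypic piece $L(\nu_j)_{\mu_0}\otimes M^{\nu_j}$ containing $f_i(z)$ (a slightly sharper localization that the paper reuses later in the proof of Theorem \ref{sfg}).
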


\begin{proof} Each of the modules $L(\nu)$ appearing in \eqref{dlmdecomp} is a $G$-module, and since $G$ is reductive, it has a decomposition $L(\nu) =\bigoplus_{\mu\in H^{\nu}} L(\nu)_{\mu}$. Here $\mu$ runs over a finite set $H^{\nu}$ of irreducible, finite-dimensional representations $L(\nu)_{\mu}$ of $G$, possibly with multiplicity. We obtain a refinement of \eqref{dlmdecomp},
\begin{equation}\label{decompref} \cS(n) \cong \bigoplus_{\nu\in H} \bigoplus_{\mu\in H^{\nu}} L(\nu)_{\mu} \otimes M^{\nu}.\end{equation}
Under the linear isomorphism $\cS(n)^G\cong R$, let $f_i(z)$ and $(\sigma f_i)(z)$ correspond to $f_i$ and $\sigma f_i$, respectively, for $i=1,\dots, r$. By Lemma \ref{reconlem}, the set $$\{(\sigma f_i)(z)\in \cS(n)^G|\ i=1,\dots,r,\ \sigma\in \text{GL}(\infty)\},$$ is a strong generating set for $\cS(n)^G$. Clearly $f_1(z),\dots, f_r(z)$ must lie in a finite direct sum
\begin{equation}\label{newsummation} \bigoplus_{j=1}^t L(\nu_j)\otimes M^{\nu_j}\end{equation} of the modules appearing in (\ref{dlmdecomp}). By enlarging the collection $f_1(z),\dots,f_r(z)$ if necessary, we may assume without loss of generality that each $f_i(z)$ lies in a single representation of the form $L(\nu_j)\otimes M^{\nu_j}$. Moreover, we may assume that $f_i(z)$ lies in a trivial $G$-submodule $L(\nu_j)_{\mu_0} \otimes M^{\nu_j}$, where $\mu_0$ denotes the trivial, one-dimensional $G$-module. (In particular, $L(\nu_j)_{\mu_0}$ is one-dimensional). Since the actions of $\text{GL}(\infty)$ and $\text{Sp}(2n)$ on $\cS(n)$ commute, $(\sigma f_i)(z)\in L(\nu_j)_{\mu_0}\otimes M^{\nu_j}$ for all $\sigma\in \text{GL}(\infty)$. \end{proof}

\begin{cor} $\cS(n)^G$ is a finitely generated vertex algebra.\end{cor}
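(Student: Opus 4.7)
The plan is to reduce finite generation of $\cS(n)^G$ to the strong finite generation of $\cV_{-n}$ (Theorem~\ref{descriptionofvn}) together with the irreducibility of the modules $M^{\nu}$ appearing in the Kac--Wang--Yan decomposition~\eqref{dlmdecomp}. The key mechanism is that, inside a single irreducible $\cV_{-n}$-summand, infinitely many polarizations can be obtained from one nonzero vector by applying modes of $\cV_{-n}$, so the infinite strong generating set supplied by Lemma~\ref{ordfg} collapses to a finite vertex-algebra generating set.

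In more detail, Lemma~\ref{ordfg} furnishes a strong generating set $\{(\sigma f_i)(z)\mid i=1,\dots,r,\ \sigma\in GL(\infty)\}$ for $\cS(n)^G$ that is contained in the finite direct sum $\bigoplus_{j=1}^{t} L(\nu_j)_{\mu_0}\otimes M^{\nu_j}$, where each $L(\nu_j)_{\mu_0}$ is the one-dimensional trivial $G$-isotypic piece of $L(\nu_j)$. Choose a spanning vector $v_j\in L(\nu_j)_{\mu_0}$ and, for each $j$, pick any nonzero element $h_j\in L(\nu_j)_{\mu_0}\otimes M^{\nu_j}\subset\cS(n)^G$. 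Since $M^{\nu_j}$ is an irreducible $\cV_{-n}$-module, the cyclic $\cV_{-n}$-submodule of $\cS(n)$ generated by $h_j$ under the modes $a(n)$, $a\in\cV_{-n}$, $n\in\mathbb{Z}$, equals all of $v_j\otimes M^{\nu_j}$.

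Now combine the following two finite subsets of $\cS(n)^G$: the strong generating set $\{w^1,w^3,\dots,w^{2n^2+4n-1}\}$ of $\cV_{-n}\cong\cS(n)^{Sp(2n)}\subset\cS(n)^G$ provided by Theorem~\ref{descriptionofvn}, and the finite set $\{h_1,\dots,h_t\}$. Let $\cA$ be the vertex subalgebra of $\cS(n)^G$ they generate. Being closed under all circle products $\circ_n$, $\cA$ contains every iterated Wick product of the $w^{2m+1}$, hence all of $\cV_{-n}$; and since $\cA$ is closed under $a\circ_n h_j = a(n) h_j$ for $a\in\cV_{-n}$, $\cA$ also contains the $\cV_{-n}$-submodule generated by each $h_j$, which is $L(\nu_j)_{\mu_0}\otimes M^{\nu_j}$. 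Consequently $\cA$ contains the entire strong generating set $\{(\sigma f_i)(z)\}$ of Lemma~\ref{ordfg}, and all of their iterated Wick products, so $\cA=\cS(n)^G$.

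There is no real obstacle beyond keeping the two notions of ``generation'' straight: the module-theoretic cyclicity of $M^{\nu_j}$ has to be translated into a statement about the vertex subalgebra generated by $\{w^{2m+1}\}\cup\{h_j\}$, which is immediate from the definition of QOA once one observes that the $\cV_{-n}$-module structure on $v_j\otimes M^{\nu_j}\subset\cS(n)$ is exactly given by the circle products in $\cS(n)$. The analogous argument, using Theorem~\ref{descriptionofvnodd} and the decomposition~\eqref{kwydecodd} of $\cF(n)$ in place of $\cV_{-n}$ and $\cS(n)$, shows that $\cF(n)^G$ is finitely generated for any reductive $G\subset O(n)$.
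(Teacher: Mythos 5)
Your argument is correct and follows essentially the same route as the paper: Lemma~\ref{ordfg} places a strong generating set inside finitely many irreducible $\cV_{-n}$-modules, and irreducibility collapses each module to a single cyclic generator. The only difference is that the paper cites the elementary fact that $\cV_{-n}$ is generated as a vertex algebra by $w^3$ alone (the analogue of Lemma~\ref{weakfg} for the quotient), so it gets the finite generating set $\{w^3, f_1(z),\dots,f_r(z)\}$ without invoking the much deeper Theorem~\ref{descriptionofvn}, which your proof uses but does not actually need.
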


\begin{proof} Since $\cS(n)^G$ is strongly generated by $\{ (\sigma f_i)(z)|\ i=1,\dots,r,\  \sigma\in \text{GL}(\infty)\}$, and each $M^{\nu_j}$ is an irreducible $\cV_{-n}$-module, $\cS(n)^G$ is generated by $f_1(z),\dots,f_r(z)$ as an algebra over $\cV_{-n}$. Since $\cV_{-n}$ is generated by $w^3$ as a vertex algebra, $\cS(n)^G$ is finitely generated. \end{proof}

We need a fact about representations of associative algebras which can be found in \cite{LII}. Let $A$ be an associative $\mathbb{C}$-algebra, and let $W$ be a linear representation of $A$, via an algebra homomorphism $\rho: A\ra \text{End}(W)$. Regarding $A$ as a Lie algebra with commutator as bracket, let $\rho_{\text{Lie}}:A\ra \text{End}(W)$ denote the map $\rho$, regarded now as a Lie algebra homomorphism. Clearly $\rho_{\text{Lie}}$ extends to a Lie algebra homomorphism $\hat{\rho}_{\text{Lie}}: A\ra \text{End}(\text{Sym}(W))$, where $\hat{\rho}_{\text{Lie}}(a)$ acts by derivation on each $\text{Sym}^d(W)$: $$\hat{\rho}_{\text{Lie}}(a)( w_1\cdots w_d) = \sum_{i=1}^d w_1 \cdots  \hat{\rho}_{\text{Lie}}(a)(w_i)  \cdots  w_d.$$ This extends to an algebra homomorphism $U(A)\ra \text{End}(\text{Sym}(W))$ which we also denote by $\hat{\rho}_{\text{Lie}}$. The following result appears as Lemma 3 of \cite{LII}.

\begin{lemma} \label{first} Given $\mu \in U(A)$ and $d\geq 1$, let $\Phi^d_{\mu}  = \hat{\rho}_{\text{Lie}}(\mu) \big|_{\text{Sym}^d(W)} \in \text{End}(\text{Sym}^d(W))$. Let $E$ denote the subspace of $\text{End}(\text{Sym}^d(W))$ spanned by $\{\Phi^d_{\mu}|\ \mu\in U(A)\}$, which has a filtration $$E_1\subset E_2\subset \cdots,\qquad  E = \bigcup_{r\geq 1} E_r.$$ Here $E_r$ is spanned by $\{\Phi^d_{\mu}|\  \mu \in U(A),\ \text{deg}(\mu) \leq r\}$. Then $E = E_d$.\end{lemma}

\begin{cor} \label{firstcor} Let $f\in \text{Sym}^d(W)$, and let $M\subset \text{Sym}^d(W)$ be the cyclic $U(A)$-module generated by $f$. Then $\{\hat{\rho}_{\text{Lie}}(\mu)(f)|\ \mu\in U(A),\ \text{deg}(\mu)\leq d\}$ spans $M$.\end{cor}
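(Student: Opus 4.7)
The plan is to deduce the corollary directly from Lemma \ref{first}, using only two elementary observations about the derivation action of $A$ on $\mathrm{Sym}(W)$. First I would note that because each generator $a\in A$ acts on $\mathrm{Sym}(W)$ by derivation via $\hat{\rho}_{\text{Lie}}$, it preserves the symmetric-power grading: $\hat{\rho}_{\text{Lie}}(a)\bigl(\mathrm{Sym}^d(W)\bigr)\subset \mathrm{Sym}^d(W)$. Since $\hat{\rho}_{\text{Lie}}$ is an algebra homomorphism from $U(A)$ into $\mathrm{End}(\mathrm{Sym}(W))$, every $\hat{\rho}_{\text{Lie}}(\mu)$ with $\mu\in U(A)$ also preserves $\mathrm{Sym}^d(W)$. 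Hence the cyclic module $M=\hat{\rho}_{\text{Lie}}(U(A))\cdot f$ is automatically contained in $\mathrm{Sym}^d(W)$, and every element of $M$ can be written as $\Phi^d_\mu(f)$ for some $\mu\in U(A)$.

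Next I would invoke Lemma \ref{first}, which asserts that the subspace $E\subset \mathrm{End}(\mathrm{Sym}^d(W))$ spanned by all the operators $\Phi^d_\mu$ coincides with the subspace $E_d$ spanned by those $\Phi^d_\mu$ whose defining word $\mu\in U(A)$ has degree at most $d$. Applied to an arbitrary $\mu\in U(A)$, this yields a decomposition
\[
\Phi^d_\mu \;=\; \sum_{i} c_i\,\Phi^d_{\mu_i},
\qquad c_i\in\mathbb{C},\ \deg(\mu_i)\le d.
\]
Evaluating both sides at the particular vector $f\in\mathrm{Sym}^d(W)$ gives
\[
\hat{\rho}_{\text{Lie}}(\mu)(f) \;=\; \sum_{i} c_i\,\hat{\rho}_{\text{Lie}}(\mu_i)(f),
\]
which shows that every generator of $M$ lies in the linear span of
$\{\hat{\rho}_{\text{Lie}}(\mu)(f)\mid \mu\in U(A),\ \deg(\mu)\le d\}$. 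Since these elements also obviously belong to $M$, they span $M$, which is the assertion of the corollary.

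There really is no obstacle here: the corollary is a one-line consequence of the lemma once one notices that the derivation action preserves the degree $d$ component, so that the work already done in Lemma \ref{first} applies verbatim. The only thing to be mildly careful about is making explicit that the filtration by $\deg(\mu)$ used in Lemma \ref{first} is the one induced by word length in the generators $A\subset U(A)$, so that the statement of the corollary is referring to the same notion of degree.
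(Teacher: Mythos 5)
Your proof is correct and is exactly the (implicit) argument the paper intends: the paper gives no separate proof of the corollary, treating it as an immediate consequence of Lemma \ref{first} via precisely the two observations you make, namely that the derivation action preserves $\text{Sym}^d(W)$ so every element of $M$ has the form $\Phi^d_\mu(f)$, and that $E=E_d$ lets you rewrite each such operator using words of degree at most $d$ before evaluating at $f$.
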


Recall the Lie algebra $\cP^+\subset \hat{\cD}^+$ generated by the modes $$\{w^{2m+1}(k)|\ m\geq 0, \ k\geq 0\}.$$ Note that $\cP^+$ has a decomposition $$\cP^+ = \cP^+_{<0} \oplus \cP^+_0\oplus \cP^+_{>0},$$ where $\cP^+_{<0}$, $\cP^+_0$, and $\cP^+_{>0}$ are the Lie algebras spanned by $$\{w^{2m+1}(k)|\ 0\leq k< 2m+1\},\qquad \{w^{2m+1}(2m+1)\},\qquad \{w^{2m+1}(k)|\ k>2m+1\},$$ respectively. Clearly $\cP^+$ preserves the filtration on $\cS(n)$, so each element of $\cP^+$ acts by a derivation of degree zero on $\text{gr}(\cS(n))$. 

Let $\cM$ be an irreducible, highest-weight $\cV_{-n}$-submodule of $\cS(n)$ with generator $f(z)$, and let $\cM'\subset \cM$ denote the $\cP^+$-submodule generated by $f(z)$. Since $f(z)$ has minimal weight among elements of $\cM$ and $\cP^+_{>0}$ lowers weight, $f(z)$ is annihilated by $\cP^+_{>0}$. Moreover, $\cP^+_0$ acts diagonalizably on $f(z)$, so $f(z)$ generates a one-dimensional $\cP^+_0\oplus \cP^+_{>0}$-module. By the Poincar\'e-Birkhoff-Witt theorem, $\cM'$ is a quotient of $$U(\cP^+)\otimes_{U(\cP^+_0\oplus \cP^+_{>0})} \mathbb{C} f(z),$$ and in particular is a cyclic $\cP^+_{<0}$-module with generator $f(z)$. Suppose that $f(z)$ has degree $d$, that is, $f(z)\in \cS(n)_{(d)} \setminus \cS(n)_{(d-1)}$. Since $\cP^+$ preserves the filtration on $\cS(n)$, and $\cM$ is irreducible, the nonzero elements of $\cM'$ lie in $\cS(n)_{(d)} \setminus \cS(n)_{(d-1)}$. Therefore, the projection $\cS(n)_{(d)}\ra \cS(n)_{(d)}/\cS(n)_{(d-1)} \subset \text{gr}(\cS(n))$ restricts to an isomorphism of $\cP^+$-modules \begin{equation}\label{isopmod} \cM'\cong \text{gr}(\cM')\subset \text{gr}(\cS(n)).\end{equation} By Corollary \ref{firstcor}, $\cM'$ is spanned by elements of the form $$\{w^{2l_1+1}(k_1)\cdots w^{2l_r+1}(k_r) f(z) |\ w^{2l_i+1}(k_i)\in \cP^+_{<0},\ r\leq d\}.$$

The next result is analogous to Lemma 7 of \cite{LII}, and the proof is the same.
\begin{lemma} \label{fourth} Let $\cM$ be an irreducible, highest-weight $\cV_{-n}$-submodule of $\cS(n)$ with highest-weight vector $f(z)$ of degree $d$. Let $\cM'$ be the corresponding $\cP^+$-module generated by $f(z)$, and let $f$ be the image of $f(z)$ in $\text{gr}(\cS(n))$, which generates $M = \text{gr}(\cM')$ as a $\cP^+$-module. Fix $m$ so that $f\in \text{Sym}^d(W_m)$, where $W_m\subset \text{gr}(\cS(n))$ has basis $\{\beta^i_j, \gamma^i_j|\ 1\leq  i \leq n,\ 0\leq j\leq m\}$. Then $\cM'$ is spanned by $$\{w^{2l_1+1}(k_1) \cdots w^{2l_r+1}(k_r) f(z)|\ w^{2l_i+1}(k_i)\in \cP^+_{<0},\ \  r\leq d,\ \ 0\leq k_i \leq 2m+1\}.$$\end{lemma}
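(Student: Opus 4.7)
The first step is to use the isomorphism \eqref{isopmod} of $\cP^+$-modules to translate the statement into one about $M=\text{gr}(\cM')\subset\text{Sym}^d(W)$: an element $w^{2l_1+1}(k_1)\cdots w^{2l_r+1}(k_r)f(z)\in\cM'$ corresponds to $\hat{\rho}_{\text{Lie}}(a_{l_1,k_1}*\cdots*a_{l_r,k_r})f\in M$, where $a_{l,k}\in\text{End}(W)$ denotes the linear action of $w^{2l+1}(k)\in\cP^+_{<0}$ on $W$. Let $A\subset\text{End}(W)$ be the associative algebra these generate. By Corollary \ref{firstcor}, $M$ is already spanned by $\hat{\rho}_{\text{Lie}}(\mu)f$ with $\mu\in U(A)$ of degree at most $d$; the remaining task is to impose the mode bound $k_i\in[0,2m+1]$.

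Second, given any $\mu=a_1*\cdots*a_r$ with $r\leq d$, I would expand $\hat{\rho}_{\text{Lie}}(\mu)f$ by iterated Leibniz. Writing $f$ as a linear combination of symmetric monomials $v_1\cdots v_d$ with $v_j\in W_m$, one obtains a sum over assignments $\phi:\{1,\ldots,r\}\to\{1,\ldots,d\}$ of terms in which each factor $v_j$ is acted on by the single operator $b_j\in A$ obtained by composing, in the prescribed order, those $a_i$ with $i\in\phi^{-1}(j)$. The crucial observation is that every such composition $b_j$ still satisfies the hypothesis of the generalization of Lemma \ref{third}: by \eqref{actionp}, each generator $a_{l,k}$ acts as $\beta^p_j\mapsto c_j\beta^p_{j+w}$ and $\gamma^p_j\mapsto c_j\gamma^p_{j+w}$ with $c_j$ independent of $p\in\{1,\ldots,n\}$, and an immediate induction shows this property is preserved under composition, with the total weight of any nontrivial $b_j$ being strictly positive.

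Third, Lemma \ref{third} then rewrites each $b_j|_{W_m}$ as $\sum_{k'_j}c_{k'_j}\,a'_{k'_j}|_{W_m}$ with $a'_{k'_j}\in\cP^+_{<0}$ of mode $k'_j\in[0,2m+1]$. Substituting these rewrites block-by-block converts each Leibniz term of $\hat{\rho}_{\text{Lie}}(\mu)f$ into a combination of monomials in which at most $d$ of the $v_j$'s are acted on by a single constrained-mode operator. Collecting and comparing with the Leibniz expansion of $\hat{\rho}_{\text{Lie}}(\mu'')f$ for various $\mu''$ expresses $\hat{\rho}_{\text{Lie}}(\mu)f$ as $\sum_{\mu''}C_{\mu''}\hat{\rho}_{\text{Lie}}(\mu'')f$ \emph{up to diagonal corrections}, namely terms in which a composition of two or more of the $a'_{k'_j}$'s still acts on a single factor. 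Such a correction is precisely $\hat{\rho}_{\text{Lie}}(a'_{k'_1}a'_{k'_2}\cdots)f$ with the composition taken in $A$; applying Lemma \ref{third} one more time to that composition reduces it to a combination of $\hat{\rho}_{\text{Lie}}(\mu''')f$ with $\mu'''$ of strictly smaller degree and constrained modes, and iterating eliminates all diagonal corrections.

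The bookkeeping of these diagonal Leibniz corrections is the main technical obstacle, and is resolved precisely by the stability of the Lemma \ref{third} hypothesis under composition of modes from $\cP^+_{<0}$. The remainder of the argument is formally identical to the proof of Lemma 7 in \cite{LII}, with $GL(n)$ replaced by $Sp(2n)$ and the modes of $\cW_{1+\infty,-n}$ replaced by the Kac--Wang--Yan modes $w^{2l+1}(k)$.
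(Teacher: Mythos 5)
Your proposal is correct and follows essentially the argument the paper intends: the paper omits the proof, deferring to Lemma 7 of \cite{LII}, and that argument is precisely the combination you describe of the isomorphism \eqref{isopmod}, Corollary \ref{firstcor}, and the stability under composition of the hypothesis of Lemma \ref{third} for modes in $\cP^+_{<0}$, with the polarization/diagonal-term bookkeeping resolved by induction on the degree of the monomial in $U(A)$. No gaps.
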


We order the elements $w^{2l+1}(k)\in \cP^+_{<0}$ as follows: $w^{2l_1+1}(k_1) > w^{2l_2+1}(k_2)$ if $l_1>l_2$, or $l_1=l_2$ and $k_1<k_2$. Then Lemma \ref{fourth} can be strengthened as follows: $\cM'$ is spanned by elements of the form $w^{2l_1+1}(k_1)\cdots w^{2l_r+1}(k_r) f(z)$ with \begin{equation}\label{shapealpha} w^{2l_i+1}(k_i)\in \cP^+_{<0},\quad  r\leq d,\quad  0\leq k_i\leq 2m+1,\quad  w^{2l_1+1}(k_1)\geq \cdots \geq w^{2l_r+1}(k_r).\end{equation}

As in \cite{M}, given an irreducible $\cV_{-n}$-submodule $\cM$ of $\cS(n)$, define $C_1(\cM)$ to be the subspace of $\cM$ spanned by elements of the form $$:a(z) b(z):, \qquad a(z)\in \bigoplus_{k>0}\cV_{-n}[k],\qquad b(z)\in \cM.$$ Following Miyamoto's definition in \cite{M}, we say that $\cM$ is $C_1$-cofinite as a $\cV_{-n}$-module if $\cM / C_1(\cM)$ is finite-dimensional.

\begin{lemma} \label{fifth} Every irreducible $\cV_{-n}$-submodule $\cM$ of $\cS(n)$ is $C_1$-cofinite. 
\end{lemma}
\begin{proof} The argument is the same as the proof of Lemma 8 of \cite{LII}, and is omitted. Note that Lemma 8 of \cite{LII} implies that every irreducible $\cW_{1+\infty,-n}$-submodule of $\cS(n)$ is $C_1$-cofinite, but this terminology was not used. \end{proof}

\begin{cor} \label{sixth} Let $\cM$ be an irreducible $\cV_{-n}$-submodule of $\cS(n)$. Given a subset $S\subset \cM$, let $\cM_S\subset \cM$ denote the subspace spanned by elements of the form $$:\omega_1(z)\cdots \omega_t(z) \alpha(z):,\qquad \omega_j(z)\in \cV_{-n},\qquad \alpha(z)\in S.$$ There exists a finite set $S\subset \cM$ such that $\cM = \cM_S$.\end{cor}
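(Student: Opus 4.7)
The plan is to use Lemma \ref{fifth} directly. Since $\cM_{Wick}$ is a graded subspace of $\cM$ (because $:a(z)b(z):$ is weight-homogeneous whenever $a$ and $b$ are), the quotient $\cM/\cM_{Wick}$ is graded, and is finite-dimensional by Lemma \ref{fifth}. I would choose $S$ to be any finite set of weight-homogeneous elements of $\cM$ whose images form a basis of $\cM/\cM_{Wick}$. Then for every homogeneous $m \in \cM[K]$ we can write $m = \sum_i c_i \alpha_i + w$ with scalars $c_i$, elements $\alpha_i \in S \cap \cM[K]$, and a remainder $w \in \cM_{Wick}[K]$.

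I would prove $\cM[K] \subset \cM_S$ by induction on $K \geq w_0$, where $w_0$ is the minimum weight occurring in $\cM$. For the base case $K = w_0$, note that $\cM_{Wick}[w_0] = 0$, since any generator $:a(z) b(z):$ of $\cM_{Wick}$ has weight at least $1 + w_0 > w_0$. Hence every $m \in \cM[w_0]$ is a linear combination of elements of $S$, which belongs to $\cM_S$ via the trivial $t = 0$ case of the definition.

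For the inductive step, assume $\cM[K'] \subset \cM_S$ for all $w_0 \leq K' < K$, and let $m \in \cM[K]$. Writing $m = \sum_i c_i \alpha_i + w$ as above, the first summand is already in $\cM_S$, so it suffices to handle $w$. Now $w$ is a linear combination of terms of the form $:a(z) b(z):$ with $a(z) \in \cV_{-n}[j]$, $j \geq 1$, and $b(z) \in \cM[K-j]$. By the inductive hypothesis, each such $b(z)$ can be expressed as a linear combination of iterated Wick products $:\omega_1(z) \cdots \omega_t(z) \beta(z):$ with $\omega_l \in \cV_{-n}$ and $\beta \in S$. Using the recursive definition \eqref{iteratedwick} of iterated Wick products,
\[
:a(z)\, :\omega_1(z)\cdots\omega_t(z)\beta(z):\, : \;=\; :a(z)\,\omega_1(z)\cdots\omega_t(z)\beta(z):,
\]
which has the required form with $t+1$ factors from $\cV_{-n}$ and tail $\beta \in S$. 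Hence $w \in \cM_S$, completing the induction and the proof.

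There is essentially no obstacle: all the genuine content is packaged in Lemma \ref{fifth}, and what remains is a straightforward inductive bookkeeping argument. The only point requiring care is the observation that prepending a factor from $\cV_{-n}$ to an existing iterated Wick product in $\cM_S$ yields another iterated Wick product of the same form, which is immediate from the recursive nature of the normal-ordering convention fixed in \eqref{iteratedwick}.
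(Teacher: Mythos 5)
Your proof is correct and is exactly the argument the paper intends: the corollary is stated without proof as an immediate consequence of Lemma \ref{fifth}, and your choice of $S$ as a homogeneous lift of a basis of the finite-dimensional graded quotient $\cM/\cM_{Wick}$, followed by induction on weight and the left-nested convention \eqref{iteratedwick} for prepending a factor of $\cV_{-n}$, is the standard way to fill in that step.
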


\begin{thm} \label{sfg} For any reductive group $G$ of automorphisms of $\cS(n)$, $\cS(n)^G$ is strongly finitely generated.\end{thm}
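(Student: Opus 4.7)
The plan is to combine three earlier results: Lemma \ref{ordfg} (the infinite strong generating set of $\cS(n)^G$ is localized inside finitely many irreducible $\cV_{-n}$-submodules $M^{\nu_j}$), Corollary \ref{sixth} (each such submodule is generated from a finite subset by normally ordered products with $\cV_{-n}$), and Theorem \ref{descriptionofvn} ($\cV_{-n}$ itself is strongly finitely generated by $\{w^1, w^3, \ldots, w^{2n^2+4n-1}\}$).

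First I would invoke Lemma \ref{ordfg} to fix $f_1(z), \ldots, f_r(z) \in \cS(n)^G$, each lying in a trivial $G$-summand $L(\nu_j)_{\mu_0} \otimes M^{\nu_j}$ for some $\nu_j$ drawn from a finite set $\{\nu_1, \ldots, \nu_t\}$, such that the polarizations $\{(\sigma f_i)(z)|\, i=1,\ldots,r,\ \sigma \in GL(\infty)\}$ strongly generate $\cS(n)^G$. Because $GL(\infty)$ commutes with $Sp(2n)$, all these polarizations still lie in $\bigoplus_{j=1}^t L(\nu_j)_{\mu_0} \otimes M^{\nu_j}$, which embeds in $\cS(n)^G$ as a direct sum of finitely many irreducible $\cV_{-n}$-submodules.

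Next I would apply Corollary \ref{sixth} to each of these finitely many submodules $M^{\nu_j}$ to obtain a finite subset $S_j \subset M^{\nu_j}$ such that every element of $M^{\nu_j}$ can be expressed as a normally ordered polynomial in vertex operators from $\cV_{-n}$ and elements of $S_j$. Setting $S = S_1 \cup \cdots \cup S_t$, every polarization $(\sigma f_i)(z)$ is then a normally ordered polynomial in elements of $\cV_{-n}$ and of the finite set $S$.

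To conclude, substitute in the strong generators $\{w^1, w^3, \ldots, w^{2n^2+4n-1}\}$ of $\cV_{-n}$ furnished by Theorem \ref{descriptionofvn}: every element of $\cV_{-n}$ is a normally ordered polynomial in these $n^2+2n$ elements together with their derivatives. Hence the finite set $\{w^1, w^3, \ldots, w^{2n^2+4n-1}\} \cup S$ strongly generates $\cS(n)^G$. No serious obstacle remains at this assembly stage; the real work was carried out in Lemma \ref{fifth} and Corollary \ref{sixth}, where the finite-dimensionality of $\cM/\cM_{Wick}$ collapsed the infinite polarization set to a finite one modulo the action of the already strongly finitely generated vertex algebra $\cV_{-n}$.
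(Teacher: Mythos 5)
Your proposal is correct and follows essentially the same route as the paper: Lemma \ref{ordfg} to localize the polarization generators in finitely many irreducible highest-weight $\cV_{-n}$-submodules, Corollary \ref{sixth} to reduce each such module to a finite set modulo normally ordered products with $\cV_{-n}$, and Theorem \ref{descriptionofvn} to replace $\cV_{-n}$ by its finite strong generating set. The only cosmetic difference is that the paper explicitly replaces the $f_i(z)$ by the highest-weight vectors of their modules before invoking Corollary \ref{sixth}, whereas you apply the corollary directly to the modules $M^{\nu_j}$; both amount to the same thing.
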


\begin{proof} By Lemma \ref{ordfg}, we can find $f_1(z),\dots, f_r(z) \in \cS(n)^G$ such that $\text{gr}(\cS(n))^G$ is generated by the corresponding polynomials $f_1,\dots, f_r\in \text{gr}(\cS(n))^G$, together with their polarizations. We may assume that each $f_i(z)$ lies in an irreducible, highest-weight $\cV_{-n}$-module $\cM_i$ of the form $L(\nu)_{\mu_0}\otimes M^{\nu}$, where $L(\nu)_{\mu_0}$ is a trivial, one-dimensional $G$-module. Furthermore, we may assume that $f_1(z),\dots, f_r(z)$ are highest-weight vectors for the action of $\cV_{-n}$. For each $\cM_i$, choose a finite set $S_i \subset \cM_i$ such that $\cM_i = (\cM_i)_{S_i}$, and define $$ S=\{w^1,w^3,\dots, w^{2n^2+4n-1} \} \cup \big(\bigcup_{i=1}^r S_i \big).$$ Since $\{w^1,w^3,\dots, w^{2n^2+4n-1}\}$ strongly generates $\cV_{-n}$ and $\bigoplus_{i=1}^r \cM_i$ contains a strong generating set for $\cS(n)^G$, $S$ strongly generates $\cS(n)^G$. \end{proof}

We sketch the proof of the analogous results for $\cF(n)^G$ where $G\subset \text{O}(n)$ is a reductive group. By Theorem 11.2 and Corollary 14.2 of \cite{KWY}, $\cF(n)$ has a decomposition $$ \cF(n) \cong \bigoplus_{\mu\in H'} L(\mu)\otimes M^{\mu},$$ where $H'$ indexes the irreducible, finite-dimensional representations $L(\mu)$ of $\text{O}(n)$, and the $M^{\mu}$'s are inequivalent, irreducible, highest-weight $\cV_{n/2}$-modules. Since $G$ is reductive, $\cF(n)^G$ is a direct sum of $M^{\mu}$'s as well. By Theorem 6.4 of \cite{CL}, which is an analogue of Theorem \ref{weylfinite} for odd variables, $\cF(n)^G$ has a strong generating set which lies in a finite sum of these modules. Each irreducible $\cV_{n/2}$-module $\cM$ appearing in $\cF(n)$ has a finiteness property analogous to Lemma \ref{fourth}. Let $f(z)$ be the highest-weight vector of $\cM$, and suppose that $f(z)$ has degree $d$. Let $\cM'$ be the corresponding $\cP^+$-module generated by $f(z)$, and let $f$ be the image of $f(z)$ in $\text{gr}(\cF(n))$, which generates $M = \text{gr}(\cM')$ as a $\cP^+$-module. Fix $m$ so that $f\in \bigwedge^d(W_m)$. Then $\cM'$ is spanned by $$\{w^{2l_1+1}(k_1) \cdots w^{2l_r+1}(k_r) f(z)|\ w^{2l_i+1}(k_i)\in \cP^+_{<0},\ \  r\leq d,\ \ 0\leq k_i \leq 2m+1\}.$$ This implies the $C_1$-cofiniteness property of all irreducible $\cV_{n/2}$-submodules $\cM$ of $\cF(n)$. In particular, there exists a finite set $S$ such that $\cM = \cM_S$, where $\cM_S$ is spanned by $$:\omega_1(z)\cdots \omega_t(z) \alpha(z):,\qquad \omega_j(z)\in \cV_{n/2},\qquad \alpha(z)\in S.$$ Combined with Theorem \ref{descriptionofvnodd}, this implies
\begin{thm} \label{sfgodd} For any reductive group $G$ of automorphisms of $\cF(n)$, $\cF(n)^G$ is strongly finitely generated.\end{thm}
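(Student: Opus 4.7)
The plan is to mirror the proof of Theorem \ref{sfg} step by step, making the substitutions $\cS(n) \rightsquigarrow \cF(n)$, $Sp(2n) \rightsquigarrow O(n)$, $\cV_{-n} \rightsquigarrow \cV_{n/2}$, $\text{Sym} \rightsquigarrow \bigwedge$, and replacing the decoupling bound $n^2+2n$ (from \eqref{maindecoupling}) with $n$ (from \eqref{maindecouplingodd}). Since the author has already sketched most of this immediately before the theorem, the remaining work is to assemble these ingredients in the same order as the even case and check that nothing breaks.

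First I would establish the analogue of Lemma \ref{ordfg}: $\cF(n)^G$ has a strong generating set contained in finitely many of the irreducible $\cV_{n/2}$-modules $M^{\mu}$ appearing in \eqref{dlmdecomp}. The commuting $GL(\infty)$-action on $\bigoplus_{k\geq 0} U_k$ extends to an action on $\bigwedge \bigoplus_{k\geq 0} U_k$ by polarization, and the exterior-algebra version of Weyl's theorem (cf.\ Theorem 2.5A and its orthogonal analogue in \cite{We}) yields a finite set $f_1,\dots,f_r \in (\bigwedge \bigoplus_{k=0}^{n-1} U_k)^G$ whose polarizations generate $R = (\bigwedge \bigoplus_{k\geq 0} U_k)^G$. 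Lifting via Lemma \ref{reconlem} and decomposing each $L(\mu)$ into irreducible $G$-modules as in the proof of Lemma \ref{ordfg}, we may arrange that each $f_i(z)$ is a highest-weight vector of a module of the form $L(\mu)_{\mu_0}\otimes M^{\mu}$, with $L(\mu)_{\mu_0}$ the trivial one-dimensional $G$-isotypic piece. Since $GL(\infty)$ commutes with $O(n)$, all polarizations stay inside finitely many such modules, and finite generation of $\cF(n)^G$ as a vertex algebra follows because $\cV_{n/2}$ is itself finitely generated.

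Next I would prove the analogues of Lemmas \ref{fourth}, \ref{fifth} and Corollary \ref{sixth} for the present setting. Let $\cM \subset \cF(n)$ be an irreducible highest-weight $\cV_{n/2}$-submodule with highest-weight vector $f(z)$ of degree $d$, and let $\cM'$ be the $\cP^+$-module it generates. Since $\cP^+_{>0}$ annihilates $f(z)$ and $\cP^+_0$ acts diagonalizably, $\cM'$ is cyclic over $\cP^+_{<0}$; applying Corollary \ref{firstcor} to the image $M = \text{gr}(\cM') \subset \bigwedge^d(W_m)$ (choosing $m$ large enough that $f \in \bigwedge^d(W_m)$) shows that $\cM'$ is spanned by elements
\[
w^{2l_1+1}(k_1)\cdots w^{2l_r+1}(k_r) f(z),\qquad r\leq d,\qquad 0\leq k_i\leq 2m+1.
\]
The fact that $M$ lives in the exterior rather than symmetric power does not affect the argument, which relies only on $\hat\rho_{\text{Lie}}$ acting by derivations on a graded-commutative or graded-anticommutative algebra and the bound $d$ on the symmetric/exterior degree. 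Then, for the Wick-ideal lemma, we repeat the inductive argument of Lemma \ref{fifth} verbatim, except that the decoupling relation \eqref{maindecoupling} is replaced by \eqref{maindecouplingodd}: any $w^{2l+1}(z)$ with $l \geq n$ can be written as a normally ordered polynomial in $\partial^t w^{2l'+1}$ with $0\leq l'\leq n-1$. The identity \eqref{appvaid} then forces any homogeneous element of $\cM$ of sufficiently high weight into the Wick ideal, so $\cM/\cM_{Wick}$ is finite-dimensional, giving the desired finite set $S$ with $\cM = \cM_S$.

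Finally, the conclusion is assembled exactly as in the proof of Theorem \ref{sfg}: for each of the finitely many modules $\cM_i$ supplied by the first step, choose a finite set $S_i$ with $\cM_i = (\cM_i)_{S_i}$, and set
\[
S = \{w^1, w^3, \dots, w^{2n-1}\} \cup \Bigl(\bigcup_{i=1}^r S_i\Bigr).
\]
Since Theorem \ref{descriptionofvnodd} tells us that $\{w^1, w^3, \dots, w^{2n-1}\}$ strongly generates $\cV_{n/2}$, and since the $\cM_i$'s contain a strong generating set for $\cF(n)^G$, the set $S$ is a finite strong generating set. The only point that requires any real care is the Wick-ideal step, because one must verify that the induction on $k$ and the weight estimate on $a\circ_r c$ go through with the smaller generating set $\{w^1, w^3, \dots, w^{2n-1}\}$; this is where the sharpness of Theorem \ref{descriptionofvnodd} (as opposed to a merely finite strong generating set) is essential, and it is the only substantive obstacle in adapting the $\cS(n)$-argument.
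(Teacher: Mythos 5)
Your proposal is correct and follows essentially the same route as the paper, which itself only sketches this theorem by transporting the proof of Theorem \ref{sfg} (Lemmas \ref{ordfg}, \ref{fourth}, \ref{fifth}, and Corollary \ref{sixth}) to the fermionic setting via the decomposition \eqref{kwydecodd} and the decoupling relations of Lemma \ref{lemnonzerodd}. The only quibble is your closing remark: what the Wick-ideal induction actually needs is the existence of the decoupling relations $w^{2m+1}=Q_m(w^1,\dots,w^{2n-1})$ for all $m>n$, not the minimality of the strong generating set.
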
 In particular, if $\cM_1,\dots, \cM_r$ is a set of irreducible $\cV_{n/2}$-submodules of $\cF(n)^G$ whose direct sum contains a strong generating set for $\cF(n)^G$, and $\cM_i = (\cM_i)_{S_i}$ for finite sets $S_i$, we can take $S=\{w^1,w^3,\dots, w^{2n-1} \} \cup \big(\bigcup_{i=1}^r S_i \big)$ as our strong generating set for $\cF(n)^G$.

\section{Appendix}
In this Appendix, we give an explicit formula for the relation $P_0$ among the generators of $\cV_{-1/2}$, which corresponds to the classical relation \eqref{defofpo}. Define
$$P^8_0 =\  : \Omega_{0,1}   \Omega_{0,1}   \Omega_{2,3}   \Omega_{2,3}:  + 
  : \Omega_{0,2}   \Omega_{0,2}   \Omega_{1,3}   \Omega_{1,3}:  + : \Omega_{0,3}   \Omega_{0,3}   \Omega_{1,2}   \Omega_{1,2}: $$ $$- 2  : \Omega_{0,2}   \Omega_{0,3}   \Omega_{1,2}   \Omega_{1,3}:  + 2  : \Omega_{0,1}   \Omega_{0,3}   \Omega_{1,2}   \Omega_{2,3}: - 2  : \Omega_{0,1}   \Omega_{0,2}   \Omega_{1,3}   \Omega_{2,3}: , $$ which is clearly a normal ordering of \eqref{defofpo}. Next, we define the following quantum corrections of $P^8_0$.
 
$$P^6_0 =  - \frac{13}{84}  : \Omega_{0,1}   \Omega_{0,1}   \Omega_{2,9}: + \frac{11}{60}  : \Omega_{0,1}   \Omega_{0,1}   \Omega_{3,8}: - 
 \frac{2}{35}  : \Omega_{0,1}   \Omega_{0,2}   \Omega_{2,8}: + \frac{1}{12}  : \Omega_{0,1}   \Omega_{0,2}   \Omega_{3,7}: $$
  $$+ \frac{11}{30}  : \Omega_{0,1} \Omega_{0,3}   \Omega_{2,7}: - \frac{9}{20}  : \Omega_{0,1}   \Omega_{0,3}   \Omega_{3,6}: - 
 \frac{11}{28}  : \Omega_{0,1}   \Omega_{1,2}   \Omega_{2,7}: + \frac{1}{2}  : \Omega_{0,1}   \Omega_{1,2}   \Omega_{3,6}: $$ $$+ 
 \frac{1}{12}  : \Omega_{0,1}   \Omega_{1,3}   \Omega_{2,6}: - \frac{2}{15}  : \Omega_{0,1}   \Omega_{1,3}   \Omega_{3,5}: - 
 \frac{5}{12}  : \Omega_{0,1}   \Omega_{2,3}   \Omega_{1,6}: - \frac{1}{5}  : \Omega_{0,1}   \Omega_{2,3}   \Omega_{2,5}: $$ $$+ 
 \frac{11}{12}  : \Omega_{0,1}   \Omega_{2,3}   \Omega_{3,4}: + \frac{1}{35}  : \Omega_{0,2}   \Omega_{0,2}   \Omega_{1,8}: - 
 \frac{1}{15}  : \Omega_{0,2}   \Omega_{0,2}   \Omega_{3,6}: - \frac{11}{30}  : \Omega_{0,2}   \Omega_{0,3}   \Omega_{1,7}: $$ $$+ 
 \frac{7}{12}  : \Omega_{0,2}   \Omega_{0,3}   \Omega_{3,5}: + \frac{11}{28}  : \Omega_{0,2}   \Omega_{1,2}   \Omega_{1,7}: - 
 \frac{7}{10}  : \Omega_{0,2}   \Omega_{1,2}  \Omega_{3,5}: + \frac{1}{3}  : \Omega_{0,2}   \Omega_{1,3}   \Omega_{1,6}:$$ $$ - 
 \frac{1}{4}  : \Omega_{0,2}   \Omega_{1,3}  \Omega_{2,5}: - \frac{1}{12}  : \Omega_{0,2}   \Omega_{1,3}   \Omega_{3,4}: + 
 \frac{9}{20}  : \Omega_{0,2}   \Omega_{2,3}   \Omega_{1,5}: + \frac{9}{40}  : \Omega_{0,3}   \Omega_{0,3}   \Omega_{1,6}: $$ $$- 
 \frac{7}{24}  : \Omega_{0,3}   \Omega_{0,3}   \Omega_{2,5}: - \frac{11}{12}  : \Omega_{0,3}   \Omega_{1,2}   \Omega_{1,6}: + 
 \frac{19}{20}  : \Omega_{0,3}   \Omega_{1,2}   \Omega_{2,5}: + \frac{1}{3}  : \Omega_{0,3}   \Omega_{1,2}   \Omega_{3,4}: $$ $$- 
 \frac{11}{56}  : \Omega_{1,2}   \Omega_{1,2}   \Omega_{0,7}: + \frac{5}{8}  : \Omega_{1,2}   \Omega_{1,2}   \Omega_{3,4}: + 
 \frac{2}{15}  : \Omega_{0,3}   \Omega_{1,3}   \Omega_{1,5}: - \frac{1}{4}  : \Omega_{0,3}   \Omega_{1,3}   \Omega_{2,4}: $$ $$+ 
 \frac{1}{12}  : \Omega_{1,2}   \Omega_{1,3}   \Omega_{0,6}: - \frac{7}{12}  : \Omega_{0,3}   \Omega_{1,4}   \Omega_{2,3}: + 
\frac{ 5}{12}  : \Omega_{0,3}   \Omega_{2,3}   \Omega_{2,3}: - \frac{9}{20}  : \Omega_{1,2}   \Omega_{0,5}   \Omega_{2,3}: $$ $$- 
\frac{ 3}{4}  : \Omega_{1,2}   \Omega_{2,3}   \Omega_{2,3}: + \frac{7}{12}  : \Omega_{0,4}   \Omega_{1,3}   \Omega_{2,3}: - 
 \frac{1}{15}  : \Omega_{1,3}   \Omega_{1,3}   \Omega_{0,5}: + \frac{1}{3}  : \Omega_{1,3}   \Omega_{1,3}   \Omega_{2,3}:, $$

 $$P^4_0 =  \frac{19}{432}  : \Omega_{0,1}   \Omega_{1,12}: - \frac{113}{6720}  : \Omega_{0,1}   \Omega_{2,11}: - 
 \frac{569}{8640}  : \Omega_{0,1}   \Omega_{3,10}: + \frac{143}{1008}  : \Omega_{0,1}   \Omega_{4,9}: $$ $$ - 
 \frac{23}{700}  : \Omega_{0,1}   \Omega_{5,8}:  - \frac{559}{2016}  : \Omega_{0,1}   \Omega_{6,7}: - 
 \frac{151}{20160}  : \Omega_{0,2}   \Omega_{1,11}: - \frac{1}{252}  : \Omega_{0,2}   \Omega_{2,10}: $$ $$- 
 \frac{55}{576}  : \Omega_{0,2}   \Omega_{3,9}: + \frac{1}{420}  : \Omega_{0,2}   \Omega_{4,8}: + \frac{851}{2400}  : \Omega_{0,2}   \Omega_{5,7}: - 
 \frac{713}{6720}  : \Omega_{0,3}   \Omega_{1,10}: $$ $$- \frac{751}{20160}  : \Omega_{0,3}   \Omega_{2,9}: + 
 \frac{163}{672}  : \Omega_{0,3}   \Omega_{3,8}: - \frac{73}{1440}  : \Omega_{0,3}   \Omega_{4,7}: - \frac{49}{100}  : \Omega_{0,3}   \Omega_{5,6}: $$ $$- 
 \frac{163}{4032}  : \Omega_{1,2}   \Omega_{0,11}: + \frac{17}{336}  : \Omega_{1,2}   \Omega_{1,10}: + 
 \frac{1639}{10080}  : \Omega_{1,2}   \Omega_{2,9}: - \frac{227}{2240}  : \Omega_{1,2}   \Omega_{3,8}: $$ $$- 
 \frac{55}{168}  : \Omega_{1,2}   \Omega_{4,7}: + \frac{7}{32}  : \Omega_{1,2}   \Omega_{5,6}: + \frac{1}{60}  : \Omega_{0,4}   \Omega_{2,8}: - 
 \frac{7}{288}  : \Omega_{0,4}   \Omega_{3,7}: $$ $$+ \frac{467}{60480}  : \Omega_{1,3}   \Omega_{0,10}: - \frac{13}{756}  : \Omega_{1,3}   \Omega_{1,9}: + 
 \frac{31}{2240}  : \Omega_{1,3}   \Omega_{2,8}: + \frac{47}{1260}  : \Omega_{1,3}   \Omega_{3,7}: $$ $$ -\frac{1}{32}  : \Omega_{1,3}   \Omega_{4,6}: - 
 \frac{1}{525}  : \Omega_{0,5}   \Omega_{1,8}: - \frac{33}{400}  : \Omega_{0,5}   \Omega_{2,7}: + \frac{761}{7200}  : \Omega_{0,5}   \Omega_{3,6}: $$ $$+ 
 \frac{11}{96}  : \Omega_{1,4}   \Omega_{2,7}: - \frac{7}{48}  : \Omega_{1,4}   \Omega_{3,6}: - \frac{27}{448}  : \Omega_{2,3}   \Omega_{0,9}: + 
 \frac{131}{2240}  : \Omega_{2,3}   \Omega_{1,8}:$$ $$ - \frac{31}{84}  : \Omega_{2,3}   \Omega_{2,7}: + \frac{37}{72}  : \Omega_{2,3}   \Omega_{3,6}: - 
 \frac{89}{480}  : \Omega_{2,3}   \Omega_{4,5}: + \frac{11}{720}  : \Omega_{0,6}   \Omega_{1,7}:$$ $$ - \frac{7}{288}  : \Omega_{0,6}   \Omega_{3,5}: - 
 \frac{11}{420}  : \Omega_{1,5}   \Omega_{1,7}: - \frac{3}{160}  : \Omega_{1,5}   \Omega_{2,6}: + \frac{23}{300}  : \Omega_{1,5}   \Omega_{3,5}: $$ $$+ 
 \frac{11}{224}  : \Omega_{2,4}  \Omega_{1,7}: - \frac{7}{80}  : \Omega_{2,4}   \Omega_{3,5}: - \frac{99}{2240}  : \Omega_{0,7}   \Omega_{1,6}: + 
 \frac{11}{192}  : \Omega_{0,7}   \Omega_{2,5}: $$ $$+ \frac{31}{288}  : \Omega_{1,6}   \Omega_{1,6}: - \frac{109}{480}  : \Omega_{1,6}   \Omega_{2,5}: + 
 \frac{35}{576}  : \Omega_{1,6}   \Omega_{3,4}: + \frac{151}{800}  : \Omega_{2,5}   \Omega_{2,5}: $$ $$- \frac{87}{320}  : \Omega_{2,5}   \Omega_{3,4}: + 
 \frac{37}{288}  : \Omega_{3,4}   \Omega_{3,4}: ,$$
 
 $$P^2_0 =  \frac{109}{56000}  \Omega_{0,15} + \partial^2 \bigg( \frac{36613}{26208000}  \Omega_{0,13} + 
  \frac{63901699}{6054048000}  \Omega_{1,12} -  \frac{293340107}{12108096000}  \Omega_{2,11} $$ $$+ 
  \frac{27769129}{1345344000}  \Omega_{3,10}-  \frac{33135533}{403603200}  \Omega_{4,9} + 
 \frac{ 286002151}{1210809600}  \Omega_{5,8}-  \frac{195930023}{605404800}  \Omega_{6,7} \bigg).$$

A calculation using the Mathematica package of Thielemans \cite{T} shows that $\sum_{k=1}^4 P^{2k}_0$ lies in the kernel of $\pi_{-1/2}$, so it must coincide with $P_0$. In particular, $R_0 =  \frac{109}{56000} W^{15}$.

\end{document}